\newcommand{\mm}{\mathcal{M}}
\newtheorem{thm}{Theorem}[section]
\newtheorem{lmm}[thm]{Lemma}
\newtheorem{cor}[thm]{Corollary}
\newtheorem{prop}[thm]{Proposition}
\newtheorem{defn}[thm]{Definition}
\theoremstyle{definition}
\newcommand{\cc}{\mathbb{C}}
\newcommand{\mf}{\mathcal{F}}
\newcommand{\mx}{\mathcal{X}}
\newcommand{\my}{\mathcal{Y}}
\newcommand{\rr}{\mathbb{R}}
\newcommand{\smallavg}[1]{\langle #1 \rangle}
\newcommand{\tr}{\operatorname{Tr}}
\newcommand{\ve}{\varepsilon}
\newcommand{\zz}{\mathbb{Z}}
\newcommand{\fb}{\mathfrak{B}}
\newcommand{\dist}{\operatorname{dist}}
\newcommand{\area}{\operatorname{area}}
\newcommand{\mb}{\mathcal{B}}
\numberwithin{equation}{section}
\renewcommand{\Re}{\operatorname{Re}}
\newcommand{\ham}{\operatorname{H}}
\DeclareMathOperator{\supp}{supp}
\renewcommand{\tilde}{\widetilde}
\begin{document}

\title{A probabilistic mechanism for quark confinement}
\author{Sourav Chatterjee}
\address{Departments of Mathematics and Statistics, Stanford University}
\email{souravc@stanford.edu}
\thanks{Research was partially supported by NSF grant DMS-1855484}
\keywords{Lattice gauge theory, Yang--Mills, area law, Wilson loop, mass gap, quark confinement, center symmetry}
\subjclass[2010]{70S15, 81T13, 81T25, 82B20}

\begin{abstract}
The confinement of quarks is one of the enduring mysteries of modern physics. There is a longstanding physics heuristic that confinement is a consequence of `unbroken center symmetry'. This article gives mathematical  confirmation  of this heuristic, by rigorously defining of center symmetry in lattice gauge theories and proving that a theory is confining when center symmetry is unbroken. Furthermore, a sufficient condition for unbroken center symmetry is given: It is shown that if the center of the gauge group is nontrivial, and correlations decay exponentially under arbitrary boundary conditions, then center symmetry does not break. 
\end{abstract} 

\maketitle


\section{Introduction}\label{intro}
Quantum gauge theories, also known as quantum Yang--Mills theories, are components of the Standard Model of quantum mechanics. In spite of many decades of research, physically relevant quantum gauge theories have not yet been constructed in a rigorous mathematical sense. The most popular approach to solving this problem is via the program of constructive field theory~\cite{gj87}. In this approach, one starts with a statistical mechanical model on the lattice; the next step is to pass to a continuum limit of this model; the third step is to show that the continuum limit satisfies certain `axioms'; if these axioms are satisfied, then there is a standard machinery which allows the construction of a quantum field theory. Taking this program to its completion is one of the Clay millennium problems~\cite{jaffewitten, chatterjee19b}. There are various well-known difficulties in adapting the constructive field theory approach to gauge theories, which is probably why the question has remained open for so long; for an alternative (but also unfinished) approach, see~\cite{seiler82}. 

The statistical mechanical models considered in the first step of the above program are known as {\it lattice gauge theories} (defined in Section \ref{resultsec}). Lattice gauge theories were introduced by \citet{wegner71} to study phase transitions without a local order parameter, and later reintroduced by \citet{wilson74} to study quark confinement. A lattice gauge theory may be coupled with, for example, a Higgs field, or it may be a {\it pure} lattice gauge theory. We will only deal with pure lattice gauge theories (also called lattice Yang--Mills theories) in this manuscript. A pure lattice gauge theory is characterized by its gauge group (usually a compact matrix Lie group), the dimension of spacetime, and a parameter known as the coupling strength.  These theories on their own, even without passing to the continuum limit or constructing the quantum theory, can yield substantial physically relevant information~\cite{greensite11}. Two of the most important open questions in this area have lattice gauge theoretic formulations. The first is the question of Yang--Mills mass gap. In lattice gauge theories, mass gap means exponential decay of correlations. Mass gap is not hard to establish at sufficiently large  values of the coupling strength (for example, using the methods of \cite{ds85}). However, it is widely believed~\cite{chatterjee19b, jaffewitten} (with some dissent~\cite{ps98}) that certain lattice gauge theories have mass gap at {\it all} values of the coupling strength. Perhaps the most important example is four-dimensional $SU(3)$ lattice gauge theory. If one can show that this theory has a mass gap at all values of the coupling strength, that would explain why particles known as {\it glueballs} in the theory of strong interactions have mass~\cite{mp99}. All such questions remain open. 

The second big open question is the problem of {\it quark confinement}. Quarks are the constituents of various elementary particles, such as protons and neutrons. It is an enduring mystery why quarks are never observed freely in nature. The problem of quark confinement has received enormous attention in the physics literature, but the current consensus seems to be that a satisfactory theoretical explanation does not exist~\cite{greensite11}. 

\citet{wilson74} argued that quark confinement is equivalent to showing that the relevant lattice gauge theory satisfies what's now known as {\it Wilson's area law} (defined in Section~\ref{resultsec}).  Soon after Wilson's work,   \citet{osterwalderseiler78} proved that the area law is always satisfied at sufficiently large coupling strength. However, to prove quark confinement, one needs to show that the area law holds at {\it all} values of the coupling strength --- and in particular, for very small values. (Actually, what is really needed is that the area law holds at coupling strengths arbitrarily close to a critical value; in many theories of interest, the critical value is believed to be zero.) This need not always be true; for example, \citet{guth80} and \citet{frohlichspencer82} showed that four-dimensional $U(1)$ lattice gauge theory is not confining at weak coupling. 

Proving that the area law holds at weak coupling remains a largely open problem above dimension two (where it is relatively easy --- see \cite[Section 6]{osterwalderseiler78}, and also~\cite{bdi74}). A rare instance where the area law has been shown to hold at weak coupling is three-dimensional $U(1)$ lattice gauge theory~\cite{gopfertmack82}. But the most important case of four-dimensional $SU(3)$ theory remains out of reach. 

Some of the other notable advances in the mathematical study of confinement include the work of \citet{frohlich79}, who showed that confinement holds in $SU(n)$ theory if it holds in the corresponding $\zz_n$ theory; the work of \citet{df80}, who showed that confinement in a $d$-dimensional pure lattice gauge theory holds if there is exponential decay of correlations in a $(d-1)$-dimensional nonlinear $\sigma$ model; the work of \citet{bs83}, who studied long range order for lattice gauge theories on cylinders; and the work of \citet{bf80} on the related problem of Debye screening. A toy model exhibiting a sharp transition from the confining to the deconfining regime was studied by \citet{accfr83}.  For some recent progress towards understanding confinement in large $n$ lattice gauge theories, see~\cite{chatterjee19a, cj16}.


In physics, it has been believed since the work of \citet{thooft78} that a lattice gauge theory is confining when a certain kind of symmetry, known as {\it center symmetry}, is not spontaneously broken. This paper gives mathematical confirmation of this heuristic, by first giving a rigorous definition of center symmetry in lattice gauge theories, and then proving that a theory is confining if center symmetry is unbroken. Furthermore, a sufficient condition for unbroken center symmetry is provided: It is shown that if the center of the gauge group is nontrivial, and correlations decay exponentially under arbitrary boundary conditions, then center symmetry does not break, and therefore the theory is confining.

\section{Definitions and results}\label{resultsec}
This section contains the main results of this paper. We begin with the definitions of lattice gauge theory and Wilson loop variables. 
\subsection{Lattice gauge theories}\label{lgtdefsec}
Let $n\ge1$ and $d\ge 2$ be two integers. Let $G$ be a closed connected subgroup of $U(n)$. Let $E$ be the set of directed nearest-neighbor edges of $\zz^d$, where the direction is from the smaller vertex to the bigger one in the lexicographic ordering. We will call such edges {\it positively oriented}. For a positively oriented edge $e\in E$, let $e^{-1}$ denote the same edge but directed in the opposite direction. Such edges will be called {\it negatively oriented}.  Let $\Omega$ be the set of all functions from $E$ into $G$. That is, an element $\omega\in \Omega$ assigns a matrix $\omega_e\in G$ to each edge $e\in E$. If $\omega\in \Omega$ and $e$ is a negatively oriented edge, we define $\omega_e := \omega_{e^{-1}}^{-1}$. 

\begin{figure}[t!]
\begin{center}
\begin{tikzpicture}[scale = 3]
\draw[ultra thick, ->] (0,0) to (1,0);
\draw[ultra thick, ->] (1,0) to (1,1);
\draw[ultra thick, ->] (1,1) to (0,1);
\draw[ultra thick, ->] (0,1) to (0,0);
\node at (.5,-.15) {$e_1$};
\node at (1.15,.5) {$e_2$};
\node at (.5,1.15) {$e_3$};
\node at (-.15,.5) {$e_4$};
\node at (.5,.5) {$p$};
\end{tikzpicture}
\caption{A plaquette $p$ bounded by four directed edges $e_1,e_2,e_3,e_4$. \label{plaquettepic}}
\end{center}
\end{figure}
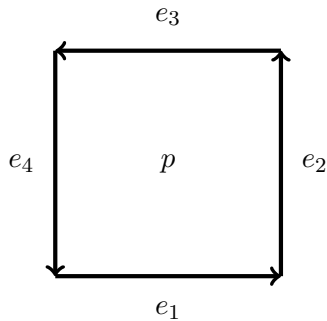

A {\it plaquette} in $\zz^d$ is a set of four directed edges that form the boundary of a square. Let $P$ be the set of all plaquettes. Given some $p\in P$ and $\omega \in \Omega$, we define $\omega_p$ as follows. Write $p$ as a sequence  of directed edges $e_1,e_2,e_3,e_4$, each one followed by the next  (see Figure \ref{plaquettepic}). Then let  $\omega_p := \omega_{e_1} \omega_{e_2}\omega_{e_3}\omega_{e_4}$. Although there are ambiguities in this definition about the choice of $e_1$ and the direction of traversal, that is not problematic because we will only use the quantity $\Re(\tr(\omega_p))$, which is not affected by these ambiguities. (Here $\omega_e$'s are $n\times n$ matrices, and $\tr$ denotes the trace of a matrix.)

Endow the product space $\Omega = G^E$ with the product $\sigma$-algebra and let $\lambda$ denote the normalized product Haar measure on $\Omega$. Pure lattice gauge theory on $\zz^d$ with gauge group $G$ and coupling parameter $\beta$ (equal to the inverse of the squared coupling strength) is formally defined as the probability measure $d\mu(\omega) = Z^{-1} e^{-\beta H(\omega)} d\lambda(\omega)$ on $\Omega$, where $H$ is the formal Hamiltonian
\begin{align}\label{hndef}
H(\omega) :=  -\sum_{p\in P} \Re(\tr(\omega_p))
\end{align}
and $Z$ is the normalizing constant. In the language of rigorous mathematical physics, what this actually defines is a {\it specification}: Although the definition of the probability measure $\mu$ as stated above does not make sense since the series defining $H$ may not be convergent, the conditional distribution of any finite set of $\omega_e$'s given all other $\omega_e$'s, under such a hypothetical probability measure $\mu$, is perfectly well-defined. Any actual probability measure $\mu$ on $\Omega$ which has these specified conditional distributions is called a {\it Gibbs measure} for this specification. It is not obvious that Gibbs measures exist. In the case of lattice gauge theories with compact gauge groups, the existence of at least one Gibbs measure follows from standard results, such as \cite[Theorem 4.22]{georgii11}. 

Throughout the rest of this paper, we will assume that $G$, $\beta$ and $d$ are fixed. In particular, whenever we refer to the lattice gauge theory defined above, we will think of $\beta$ as a fixed number and part of the definition.

\subsection{Wilson loops and area law}\label{wilsonsec}
Let $\pi$ be a finite-dimensional irreducible unitary representation of the group $G$, and let $\chi_\pi$ be the character of $\pi$. Along with $G$, $\beta$ and $d$, the representation $\pi$ will remain fixed throughout this manuscript. 


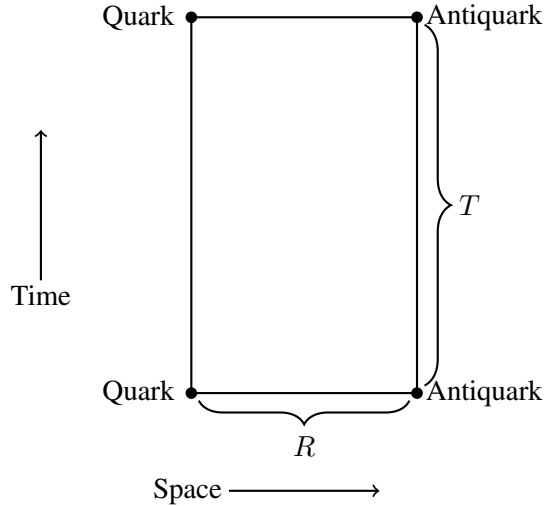
\begin{figure}[t!]
\begin{center}
\begin{tikzpicture}[scale = 1]
\draw[thick] (0,0) rectangle (3,5);
\draw[fill] (0,0) circle [radius = .07];
\draw[fill] (3,0) circle [radius = .07];
\draw[fill] (3,5) circle [radius = .07];
\draw[fill] (0,5) circle [radius = .07];
\node at (-.7,0) {Quark};
\node at (-.7,5) {Quark};
\node at (3.9,0) {Antiquark};
\node at (3.9,5) {Antiquark};
\draw [thick, decorate, decoration = {brace, mirror, amplitude = 9pt}] (0.1,-.1) to (2.9,-.1);
\node at (1.5, -.7) {$R$};
\draw [thick, decorate, decoration = {brace, mirror, amplitude = 10pt}] (3.1,.1) to (3.1,4.9);
\node at (3.7, 2.5) {$T$};
\draw[thick, ->] (-2,1.5) to (-2,3.5);
\draw[thick, ->] (.5, -1.3) to (2.5,-1.3);
\node at (-.05,-1.3) {Space};
\node at (-2,1.3) {Time};
\end{tikzpicture}
\caption{Rectangular Wilson loop representing a static quark-antiquark pair separated by distance $R$ over a time  interval of length $T$.\label{quarkpic}} 
\end{center}
\end{figure}

A {\it loop} $\ell$ in $\zz^d$ is a  sequence of directed edges $e_1,e_2,\ldots,e_k$ such that the end vertex of $e_i$ is the same as the beginning vertex of $e_{i+1}$ for $i=1,\ldots, k-1$, and the end vertex of $e_k$ is the beginning vertex of $e_1$.  Given a loop $\ell$ and a configuration $\omega\in \Omega(\zz^d)$, the {\it Wilson loop variable} $W_\ell(\omega)$ is defined as
\begin{align}
W_\ell(\omega) &:= \chi_\pi(\omega_{e_1}\omega_{e_2}\cdots \omega_{e_k})\notag \\
&= \tr(\pi(\omega_{e_1})\pi(\omega_{e_2}) \cdots \pi(\omega_{e_k})). \label{wilsondef}
\end{align}
Take any  Gibbs measure $\mu$ of our lattice gauge theory. The expected value $\smallavg{W_\ell}$ of a Wilson loop variable under $\mu$ is known as a {\it Wilson loop expectation}. A loop is called {\it rectangular} if it forms the boundary of a rectangle. The area enclosed by a rectangular loop is the product of the length and the breadth of the rectangle. The Gibbs measure is said to satisfy {\it Wilson's area law} for the representation $\pi$ if 
\begin{align}\label{wilson0}
|\smallavg{W_\ell}| \le C_1 e^{-C_2\area(\ell)}
\end{align}
for any rectangular loop $\ell$, where $C_1$ and $C_2$ are positive constants that do not depend on $\ell$, and $\area(\ell)$ is the area enclosed by $\ell$.

The reason why the area law is thought to imply confinement of quarks is as follows. Let $V(R)$ be the potential energy of a static quark-antiquark pair separated by distance $R$. Then quantum field theoretic calculations~\cite{wilson74}  indicate that for a rectangular loop $\ell$ with side-lengths $R$ and $T$, $\smallavg{W_\ell}$ should behave like $e^{-V(R)T}$ when $R$ is fixed and $T$ is taken  to infinity. Here the sides of length $R$ represent the lines joining the quark-antiquark pair at times $0$ and $T$, and the sides of length $T$ represent the trajectories of the quark and the antiquark in the time direction (Figure \ref{quarkpic}). So if the area law holds, then  $V(R)$ grows linearly in the distance $R$ between the quark and the antiquark. By the conservation of energy, this implies that the pair will not be able to separate beyond a certain distance. 

Notice that for quarks to be confined, it is not really necessary to have $V(R)$ growing linearly with $R$. It suffices to have $V(R)\to \infty$ as $R\to\infty$. Thus, for a given Gibbs measure to be confining, it is sufficient to have that for any $R$,
\[
\limsup_{T\to\infty} \frac{1}{T}\log|\smallavg{W_{\ell_{R,T}}}|\le -V(R)
\]
for some $V$ with $V(R) \to \infty$ as $R\to\infty$, where $\ell_{R,T}$ is any rectangular loop with side lengths $R$ and $T$.

\subsection{Unbroken center symmetry implies confinement}\label{cssec}
Physics literature tells us that confinement happens when a certain symmetry of a lattice gauge theory, known as {\it center symmetry}, is not broken. To my knowledge, there is no rigorous definition of center symmetry in the mathematical physics literature, nor a discussion of what it means for this symmetry to be broken. I will now propose a  definition, inspired by the physical definition of center symmetry via Polyakov loops~\cite{greensite11}. 

Instead of the lattice $\zz^d$, fix a positive integer $N$ and consider the {\it slab} $S := \{0,1,\ldots,N\} \times \zz^{d-1}$. Let $E(S)$ denote the set of positively oriented edges of $S$ and $P(S)$ denote the set of plaquettes in $S$.

The slab $S$ has two boundaries, namely, the top boundary $\{N\}\times \zz^{d-1}$, and the bottom boundary $\{0\}\times \zz^{d-1}$. Let $\partial E(S)$  denote the set of all boundary edges. Let us call an element of $G^{\partial E(S)}$ a {\it boundary condition}. Given a boundary condition $\delta$, let $\Omega(S,\delta)$ denote the set of all $\omega\in G^{E(S)}$ that agree with $\delta$ on the boundary. The formal Hamiltonian
\[
H_{S,\delta}(\omega) := -\sum_{p\in P(S)} \Re(\tr(\omega_p))
\]
defines a specification on $\Omega(S,\delta)$ with coupling parameter $\beta$ in the usual way. Let us refer to this specification as our lattice gauge theory on $S$ with boundary condition $\delta$, and denote it by $\alpha$. 

Now consider the following transformation on $\Omega(S,\delta)$. Take any element $g_0$ in the center of the group $G$. Take any $\omega\in \Omega(S,\delta)$. For each edge $e$ from the boundary $\{0\}\times \zz^{d-1}$ to the layer $\{1\}\times \zz^{d-1}$, replace $\omega_e$ by $g_0\omega_e$. Call the resulting configuration $\tau(\omega)$. We will refer to $\tau$ as a {\it center transform}. 

Since $g_0$ is an element of the center of $G$, it is easy to see that the map $\tau$ leaves $\omega_p$ invariant for any plaquette $p$, and hence (formally) leaves the Hamiltonian $H_{S,\delta}$ invariant. From this, it follows that $\tau$ is a {\it symmetry} of the specification $\alpha$ defined above (see \cite[Chapter 5]{georgii11} for the definition of a symmetry of a specification). We will call this a {\it center symmetry} of our lattice gauge theory in the slab $S$. Following the definition of unbroken symmetry in mathematical physics~\cite[Definition 5.21]{georgii11}, we will say that the specification $\alpha$ has unbroken center symmetry if every Gibbs measure of $\alpha$ is invariant under the action of each center transform. 

\begin{defn}\label{csdef}
We will say that the lattice gauge theory on $\zz^d$ defined in Subsection \ref{lgtdefsec} has unbroken center symmetry if for some $N\ge 1$, the theory restricted to the slab $S = \{0,1,\ldots, N\}\times \zz^{d-1}$, under any boundary condition, has unbroken center symmetry as defined above.
\end{defn}
Incidentally, the restriction of Gibbs measures to slabs has similarities with `finite temperature' states in lattice gauge theories, where the possible breaking of the analogous center symmetry has been analyzed in~\cite{bs83}.

Having defined center symmetry and what it means for it to be unbroken, we now arrive at the first main result of this paper. 
\begin{thm}\label{main1}
Suppose that the lattice gauge theory defined in Subsection \ref{lgtdefsec} has unbroken center symmetry in the sense of Definition \ref{csdef}. Let $\pi$  be a finite-dimensional irreducible unitary representation of $G$ that acts nontrivially on the center of $G$. Let $W_\ell$ denote the Wilson loop variable for a loop $\ell$, defined using the representation $\pi$ as in equation \eqref{wilsondef}. Then there is a function $V:\zz_+\to \rr$, satisfying $V(R)\to \infty$ as $R\to \infty$, such that for any rectangular loop $\ell$ with side-lengths $R\le T$, and for any Gibbs measure of our lattice gauge theory on $\zz^d$, we have $|\smallavg{W_\ell}|\le e^{-V(R) T}$. 
\end{thm}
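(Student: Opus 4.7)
The plan is to exploit unbroken center symmetry by first showing that ``Polyakov line'' expectations vanish, then leveraging this via reflection positivity / chessboard estimates to deduce exponential decay of the rectangular Wilson loop expectation. The first observation is that by Schur's lemma, $\pi(g) = z(g)I$ for every $g$ in the center of $G$, and the hypothesis that $\pi$ acts nontrivially on the center furnishes some $g_0$ with $z(g_0) \neq 1$. Define a \emph{Polyakov line} $L$ in the slab $S = \{0,\ldots,N\} \times \zz^{d-1}$ as a straight path from $(0,x)$ on the bottom boundary to $(N,x)$ on the top boundary. Exactly one edge of $L$ lies in the first layer, so under the center transform $\tau$ associated to $g_0$, $\omega_L \mapsto g_0 \omega_L$ and hence $\chi_\pi(\omega_L) \mapsto z(g_0)\chi_\pi(\omega_L)$. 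Invariance of every Gibbs measure under $\tau$ (the unbroken-center-symmetry hypothesis) then forces $\langle \chi_\pi(\omega_L) \rangle = 0$ in every slab Gibbs measure under every boundary condition.

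For the rectangular loop $\ell$ of sides $R \le T$, I would orient the slab so that the slab direction is parallel to the $R$-sides of $\ell$. A preliminary lemma matching the hypothesis's fixed $N$ to the needed $R$ will likely be required --- either by restricting Gibbs measures from a larger slab to a smaller one, or by showing directly that unbroken center symmetry in the given $N$-slab carries over to slabs of width $R$. With the slab in place and $\ell$ positioned so that its two $R$-sides are Polyakov lines, the Wilson loop expectation becomes a correlator of the form $\langle \chi_\pi(P_1 X_{\mathrm{top}} P_2^{-1} X_{\mathrm{bot}}) \rangle$, where $X_{\mathrm{top}}, X_{\mathrm{bot}}$ are determined by the induced boundary condition. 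I would then use reflection positivity in the $T$-direction together with chessboard estimates to dominate $|\langle W_\ell \rangle|^{2^k}$ by a product of expectations, each isolating a single Polyakov line (and hence vanishing by the first step). This yields $|\langle W_\ell \rangle| \le e^{-m(R) T}$ for some positive rate $m(R)$.

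A subtlety to watch: a naive direct attempt to apply $\tau$ to $W_\ell$ itself fails, because a rectangular loop crosses the first layer twice in opposite directions, and the two factors of $g_0$ commute through (since $g_0$ is central) and cancel, so $W_\ell$ is $\tau$-invariant and its expectation is not itself forced to vanish. The whole point of the reflection-positivity step is to separate these two contributions geometrically so that the cancellation does not occur inside each factor of the chessboard bound.

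\textbf{The main obstacle} is upgrading the rate to $V(R) \to \infty$ as $R \to \infty$, rather than just a fixed rate. The hypothesis only provides unbroken center symmetry at a single spatial scale, so extracting a divergent mass gap requires additional leverage --- perhaps a subadditivity / stacking argument where an $R \times T$ loop is chopped into smaller sub-loops and the bounds multiply, or a quantitative improvement exploiting the full strength of the hypothesis (unbroken center symmetry under \emph{any} boundary condition is strictly stronger than invariance under a single boundary). Resolving this is the technical heart of the proof; it will likely involve careful combinatorial/geometric bookkeeping of how reflection positivity interacts with the spatial scale $R$ within the slab.
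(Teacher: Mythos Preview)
Your proposal has two genuine gaps. First, reflection positivity is neither assumed nor established here; the theorem is stated for an \emph{arbitrary} Gibbs measure of the specification, and there is no evident route to reflection positivity or chessboard estimates in that generality. Second, and more fundamentally, even if a chessboard bound of the form $|\langle W_\ell\rangle|\le C(R)^T$ were granted, you have no mechanism for $C(R)\to 0$ as $R\to\infty$; you correctly flag this as the technical heart but leave it unresolved.

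The paper's argument bypasses reflection positivity entirely and supplies the missing mechanism directly. The slab is oriented the opposite way from yours: one of the $T$-sides of $\ell$ is placed as a vertical chain through the center of a box $\{0,\ldots,T\}\times\{-R,\ldots,R\}^{d-1}$, with the other three sides of $\ell$ lying on the boundary of that box. After conditioning on the boundary and expanding $W_\ell$ into a sum of $m^{2(R+T)}$ ``component variables'' (products of single matrix entries of the $\pi(\omega_e)$), the problem reduces to bounding a single vertical chain variable $f_1$ of height $T$.

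The key lemma --- the piece you are missing --- shows that in the fixed-$N$ slab from the hypothesis, $\sup_\delta |\langle f\rangle_{R,\delta}|\to 0$ as $R\to\infty$ for any vertical chain variable $f$ along the central axis. The proof is a soft compactness argument by contradiction: if the supremum does not vanish, take subsequential limits of the boundary conditions (compactness of $G$) and of the resulting measures (Prokhorov) to manufacture a Gibbs measure on the infinite slab $\{0,\ldots,N\}\times\zz^{d-1}$ with $\langle f\rangle\ne 0$; but the center transform sends $f\mapsto cf$ with $c\ne 1$, so unbroken center symmetry forces $\langle f\rangle=0$, a contradiction. This is precisely where the strength of the hypothesis (invariance under \emph{every} boundary condition) is spent --- the limiting boundary condition is not under one's control. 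A second, elementary lemma then stacks $\lfloor T/N\rfloor$ copies of the $N$-slab (conditioning on the interfaces, exactly the subadditivity you anticipated) to turn this into $|\langle f_1\rangle'|\le e^{-V(R)\lfloor T/N\rfloor}$, and the theorem follows.
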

In the above, the condition `$\pi$ acts nontrivially on the center of $G$' means that there is at least one element $g_0$ in the center such that $\pi(g_0)$ is not the identity operator. This requires, in particular, that the center of $G$ is nontrivial. Indeed, it is believed that if the center of the gauge group is trivial, such as in $SO(3)$ theory, then quarks may not confine at weak coupling~\cite{gl81}.

\subsection{A sufficient condition for unbroken center symmetry}\label{suffsec}
We will say that two edges of $\zz^d$ are neighbors if they both belong to some common plaquette. A measurable map $f:\Omega \to \rr$ will be called a {\it local function} supported on an edge $e\in E$ if $f(\omega)$ depends only on the values of $\omega_u$ for $u$ that are neighbors of $e$. Given two local functions $f$ and $g$, let $\dist(f,g)$ denote the Euclidean distance between the midpoints of their supporting edges. 

We will say that a subset of $\zz^d$ is a cube if it is a translate of $\{0,\ldots, N\}^d$ for some $N$. A boundary condition on a cube $B$ is an assignment of elements of $G$ to the boundary edges of $B$.  Given a cube $B$ and a boundary condition $\delta$, let $\Omega_{B,\delta}$ denote the set of all assignments of elements of $G$ to edges of $B$ which agree with $\delta$ on the boundary. The lattice gauge theory of Subsection~\ref{lgtdefsec} defines a probability measure on $\Omega_{B,\delta}$ in the natural way. For a measurable map $f:\Omega_{B,\delta} \to \rr$, let $\smallavg{f}_{B,\delta}$ denote the expectation of $f$ under this measure (provided that the expectation exists). 
\begin{defn}\label{expdecaydef}
Consider the lattice gauge theory defined in Subsection \ref{lgtdefsec}. We will say that this theory satisfies exponential decay of correlations under arbitrary boundary conditions if there are positive constants $K_1$ and $K_2$ depending only on $G$, $\beta$ and $d$, such that  for any cube $B$, for any boundary condition $\delta$ on $B$, and for any local functions $f$ and $g$ supported on edges in $B$ and taking values in $[-1,1]$, we have  $|\smallavg{fg}_{B,\delta} - \smallavg{f}_{B,\delta}\smallavg{g}_{B,\delta}|\le K_1 e^{-K_2\dist(f,g)}$.
\end{defn}
The above condition is quite strong, but there are situations where it can be proven to hold. For example, it can be shown to hold for essentially any lattice gauge theory when $\beta$ is small enough, using the methods of  \citet{ds85}.  There is also a belief that some form of exponential decay of correlations should hold at large $\beta$ in four-dimensional non-Abelian theories~\cite{jaffewitten, chatterjee19b} --- this is the Yang--Mills mass gap conjecture --- but it is not clear whether this belief includes the strong version stated above.

We now arrive at the second main result of this paper, which says that exponential decay of correlations under arbitrary boundary conditions implies unbroken center symmetry.
\begin{thm}\label{main2}
Consider the lattice gauge theory defined in Subsection \ref{lgtdefsec}. Suppose that it satisfies exponential decay of correlations under arbitrary boundary conditions, according to Definition \ref{expdecaydef}. Then it has unbroken center symmetry. Moreover, if Wilson loop variables are defined using a finite-dimensional irreducible unitary representation $\pi$ that acts nontrivially on the center of $G$, then any Gibbs measure for the theory satisfies Wilson's area law \eqref{wilson0} for rectangular loops. 
\end{thm}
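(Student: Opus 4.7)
My plan is to first show that exponential decay forces uniqueness of the Gibbs measure on the slab, from which $\tau$-invariance is immediate. Fix a slab $S = \{0,\ldots,N\}\times \zz^{d-1}$, a boundary condition $\delta$, and a Gibbs measure $\mu$ for the slab specification. Since every center transform $\tau$ is a symmetry of this specification, its pushforward $\tau_*\mu$ is also a Gibbs measure, so it suffices to prove uniqueness on $\Omega(S,\delta)$. For a bounded local function $f$, I would pick a subbox $B = \{0,\ldots,N\}\times[-L,L]^{d-1}\supseteq\supp(f)$ and use the DLR equations to write $\smallavg{f}_\mu$ as the $\mu$-average of $\smallavg{f}_{B,\eta}$ over the induced boundary condition $\eta$. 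First, the hypothesis of Definition \ref{expdecaydef} (stated for cubes in $\zz^d$) should transfer to slab subboxes, since conditioning a $\zz^d$-cube containing $B$ on its padding outside $S$ differs from the slab specification on $B$ only by finitely many boundary plaquettes absorbable into the boundary condition. Granting this, single-edge correlations in $B$ decay exponentially uniformly in $\eta$, and a standard coupling yields $|\smallavg{f}_{B,\eta}-\smallavg{f}_{B,\eta'}|\le C(f)e^{-cL}$; letting $L\to\infty$ shows $\smallavg{f}_\mu$ depends only on $\delta$, giving uniqueness and hence $\tau_*\mu=\mu$.

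\textbf{Area law.} By the above and Theorem \ref{main1}, every rectangular Wilson loop $\ell$ of sides $R\le T$ satisfies $|\smallavg{W_\ell}|\le e^{-V(R)T}$ with $V(R)\to\infty$; this is confinement, but the area law $|\smallavg{W_\ell}|\le C_1 e^{-C_2 RT}$ additionally demands $V(R)\ge C_2 R$. My plan is to revisit the proof of Theorem \ref{main1}: that argument controls $e^{-V(R)T}$ in terms of a correlation between two Polyakov-line characters $\chi_\pi(P(x))$ and $\overline{\chi_\pi(P(y))}$ on a slab of height $T$ with $|x-y|=R$. Under unbroken center symmetry each such character has mean zero, so this two-point function equals the full truncated correlation. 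Feeding in the exponential decay of Definition \ref{expdecaydef} one should obtain a bound $K_1 e^{-K_2 R}$ for that correlation, with rate independent of $T$, which plugged back into the Theorem \ref{main1} proof gives $V(R)\ge cR$ and hence the area law.

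\textbf{Main obstacle.} The hardest step is lifting single-edge exponential decay (all that Definition \ref{expdecaydef} directly provides) to an exponential bound on Polyakov-line character correlations whose rate does not degrade as the slab height $T$ grows. Naive iteration along the $T$ edges of a Polyakov line accumulates multiplicative constants that spoil the desired $T$-independence. The remedy I envision is to view the horizontal slices of the slab as states of a one-dimensional transfer chain running in the $\zz^{d-1}$ direction and to turn Definition \ref{expdecaydef} into a $T$-independent contraction of the slice-to-slice kernel, from which the Polyakov-line two-point function inherits exponential decay in $R$ at a rate depending only on $K_1,K_2,G,\beta,d$.
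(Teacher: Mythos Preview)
Your outline for unbroken center symmetry is correct in spirit---uniqueness of the slab Gibbs measure is exactly what the paper proves---but the phrase ``a standard coupling yields $|\smallavg{f}_{B,\eta}-\smallavg{f}_{B,\eta'}|\le C(f)e^{-cL}$'' hides the entire technical content of the paper. Definition~\ref{expdecaydef} gives only pairwise correlations of \emph{single-edge} local observables inside a cube. Turning this into the statement that the influence of the spatial boundary of a slab subbox on interior observables decays exponentially is not a standard Dobrushin argument: the temporal faces of the slab form an infinite boundary that is always present, so you must show they do not transmit information horizontally. The paper spends Sections 4--10 on this, building (i) a coupling in a single cube from the total-variation characterization, (ii) a \emph{local update map} that resamples one cube inside the slab, (iii) a \emph{global update map} averaging over all cube placements, and (iv) an infinite iteration whose fixed point is the coupling with the needed decay (Lemma~\ref{slabcoupling}). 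Your ``transfer chain'' remark is gesturing at the right difficulty, but without something like this iterated coupling you do not have a proof; the $T$-independence you flag as the obstacle is precisely what the iteration delivers.

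For the area law, there are two concrete problems. First, your description of the proof of Theorem~\ref{main1} is inaccurate: that proof does not bound a two-point Polyakov correlator $\langle\chi_\pi(P(x))\overline{\chi_\pi(P(y))}\rangle$. It bounds the expectation of a \emph{single} vertical chain variable in a finite slab $S_R$ of half-width $R$ (Lemma~\ref{vertchainlmm}), then bootstraps over sub-slabs (Lemma~\ref{bootlmmnew}). So ``feeding exponential decay into that correlator'' is not how to extract linear $V(R)$; what the paper actually does is use the slab coupling above to show that the vertical-chain expectation in $S_R$ decays like $e^{-cR}$, hence $V$ grows linearly. Second, even granting $V(R)\ge cR$ for the function produced inside the proof of Theorem~\ref{main1}, the conclusion there is $|\smallavg{W_\ell}|\le m^{4T}e^{-V(R)T/2N}$, i.e.\ $|\smallavg{W_\ell}|\le e^{(C_1-C_2 R)T}$. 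The extra $C_1T$ does not vanish automatically; for small $R$ the bound is vacuous. The paper removes it by proving an unconditional \emph{perimeter law} $|\smallavg{W_\ell}|\le C_3e^{-C_4(R+T)}$ (Lemmas~\ref{haarlmm}--\ref{perilmm}, using only center symmetry and boundedness of conditional densities) and then interpolating the two bounds with exponents $a=C_4/(C_1+C_4)$ and $1-a$. Your proposal omits this step entirely.
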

Incidentally, there are lattice gauge theories that are believed to be `gapped' --- that is, exhibiting exponential decay of correlations of local observables --- and possessing center symmetry, but not confining at large $\beta$. Examples include lattice gauge theories with finite gauge groups~\cite{seiler82, flv20, cao20, chatterjee20}, and theories coupled with Higgs fields in the adjoint representation~\cite{fradkinshenker79}. If this is true, then by Theorem~\ref{main1}, it must be that center symmetry breaks spontaneously in these models. The reason why this does not contradict Theorem~\ref{main2} is that the `exponential decay of correlations' in these theories refer to exponential decay of two-point truncated correlations under certain kinds of boundary conditions~\cite{chatterjee20, cao20}; this is different than what's commonly understood as `decay of correlations' in mathematical physics, which means absence of long-range order, or equivalently, the uniqueness of the Gibbs measure~\cite{georgii11}. Indeed, some of the above theories have been rigorously shown to be possessing multiple Gibbs measures~\cite{borgs84, ks82} and therefore actually have long-range order at weak coupling.

This completes the statements of results. The rest of the paper is devoted to the proofs of Theorem \ref{main1} and Theorem \ref{main2}.

\section{Proof of Theorem \ref{main1}}\label{main1proof}
Let $S = \{0,\ldots,N\}\times \zz^{d-1}$ be a slab where our lattice gauge theory has unbroken center symmetry under any boundary condition. Let $m$ be the dimension of the representation $\pi$. Let $g_0$ be an element in the center of $G$ such that $\pi(g_0)$ is not the identity operator. Since $\pi(g_0)$ commutes with $\pi(g)$ for every $g\in G$, Schur's lemma~\cite[Corollary 4.30]{hall15} implies that $\pi(g_0) = cI$ for some $c\ne 1$, where $I$ is the $m\times m$ identity matrix. 

We will say that an edge $e$ is a {\it vertical edge} if its endpoints differ in the first coordinate, imagining the first coordinate as the vertical direction. A sequence of vertical edges that connects the two boundaries of $S$ will be called a {\it vertical chain} of edges. Let $e_1,\ldots,e_{N}$ be a vertical chain. A {\it vertical chain variable} $f$ associated with this chain is a product of one element from each of the matrices $\pi(\omega_{e_1}),\ldots, \pi(\omega_{e_{N}})$. Since $\pi$ is an $m$-dimensional representation, there are $m^{2N}$ vertical chain variables associated with a given vertical chain. 

For each $R\ge 1$, let $S_R:= \{0,\ldots, N\}\times \{-R,\ldots, R\}^{d-1}$. A boundary condition on $S_R$ is an assignment of elements of $G$ to the boundary edges of $S_R$.  Given a boundary condition $\delta$, let $\Omega_{R,\delta}$ denote set  of all assignments of elements of $G$ to edges of $S_R$ which agree with $\delta$ on the boundary. The lattice gauge theory of Subsection~\ref{lgtdefsec} defines a probability measure on $\Omega_{R,\delta}$ in the natural way. For a measurable map $f:\Omega_{R,\delta} \to \rr$, let $\smallavg{f}_{R,\delta}$ denote the expectation of $f$ under this measure (provided that the expectation exists).
\begin{lmm}\label{vertchainlmm}
Let $\xi$ be the unique vertical chain containing the origin. There is a function $V:\zz_+\to \rr$ satisfying $V(R)\to \infty$ as $R\to\infty$, such that for any vertical chain variable $f$ associated with $\xi$, any  $R$, and any boundary condition $\delta$ on $S_R$, we have $|\smallavg{f}_{R,\delta}|\le e^{-V(R)}$. 
\end{lmm}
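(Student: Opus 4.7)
The plan is to argue by contradiction, using a compactness argument to extract a Gibbs measure of the slab specification on $S$ whose expectation of a vertical chain variable is, on the one hand, bounded away from $0$ by the assumed failure of the lemma, and on the other hand, forced to vanish by unbroken center symmetry. The algebraic backbone is that the center transform $\tau$ associated with $g_0$ acts on $\xi$ only by multiplying $\omega_e$ by $g_0$ on the left, where $e$ is the unique edge of $\xi$ from the boundary $\{0\}\times\zz^{d-1}$ to the layer $\{1\}\times\zz^{d-1}$. Since $\pi(g_0)=cI$, every entry of $\pi(g_0\omega_e)$ equals $c$ times the corresponding entry of $\pi(\omega_e)$, and hence every vertical chain variable $f$ associated with $\xi$ satisfies $f\circ\tau = c\cdot f$.

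Define $M(R):=\sup|\smallavg{f}_{R,\delta}|$, where the supremum runs over boundary conditions $\delta$ on $S_R$ and vertical chain variables $f$ associated with $\xi$. It is enough to prove $M(R)\to 0$, since one can then set $V(R):=-\log M(R)$. Suppose for contradiction that $M(R_k)\ge\varepsilon>0$ for some $\varepsilon$ and some sequence $R_k\to\infty$, realized by boundary conditions $\delta_k$ on $S_{R_k}$ and vertical chain variables $f_k$ on $\xi$. There are only $m^{2N}$ such variables, so pigeonhole lets me assume $f_k$ equals a fixed $f$ for all $k$.

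The next step is to extract a subsequential slab Gibbs measure. Extend each $\delta_k$ arbitrarily to an element of $G^{\partial E(S)}$; compactness of this product space produces a subsequence along which the extensions converge coordinatewise to some $\delta^{*}\in G^{\partial E(S)}$. The finite-volume measures $\mu_k:=\smallavg{\cdot}_{R_k,\delta_k}$, viewed through their marginals on finite edge sets of $S$, are precompact in the topology of convergence of all finite-edge marginals (since $G$ is compact); a diagonal extraction yields a probability measure $\mu$ on $\Omega(S,\delta^{*})$ as a further subsequential limit. A standard DLR passage to the limit --- using that for every fixed finite $\Lambda\subset E(S)$ the side boundary of $S_{R_k}$ eventually lies at arbitrarily large distance from $\Lambda$, while the top/bottom portion of $\delta_k$ converges to $\delta^{*}$ --- identifies $\mu$ as a Gibbs measure of the slab specification with boundary condition $\delta^{*}$. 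By the unbroken-center-symmetry hypothesis, $\mu$ is invariant under $\tau$, so $\smallavg{f}_\mu = \smallavg{f\circ\tau}_\mu = c\smallavg{f}_\mu$, which forces $\smallavg{f}_\mu=0$ since $c\ne 1$. But $f$ depends only on the $N$ edges of $\xi$, so $\smallavg{f}_\mu = \lim_k \smallavg{f}_{\mu_k}$ has modulus at least $\varepsilon$, a contradiction.

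The main obstacle I foresee is the technical verification that the subsequential limit $\mu$ is genuinely a Gibbs measure of the slab specification with boundary condition $\delta^{*}$, and not merely a weak limit point with a weaker invariance property. This requires a careful split of $\partial E(S_{R_k})$ into the persistent top/bottom part (which stays close to any fixed finite $\Lambda$ and converges to $\delta^{*}$) and the transient side part (which recedes to infinity and drops out of the conditional specification on $\Lambda$ once $R_k$ is large enough), together with a check that the Gibbs kernel on $\Lambda$ induced by $\mu_k$ converges to that of the slab specification with boundary $\delta^{*}$. Modulo this standard bookkeeping, the center-symmetry cancellation above completes the proof.
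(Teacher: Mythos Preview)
Your proposal is correct and follows essentially the same contradiction-plus-compactness route as the paper: fix $f$ by pigeonhole, extract a limiting slab boundary condition, produce a Gibbs measure on the infinite slab under which $|\langle f\rangle|\ge\varepsilon$, and derive a contradiction from $f\circ\tau=cf$ with $c\ne 1$. The one refinement the paper adds---which cleanly handles exactly the obstacle you flag---is that instead of passing directly to a limit of the $\mu_k$, it first uses the tower property to find, for each fixed $R$, a boundary condition $\eta_R$ on $S_R$ agreeing with the limiting $\delta$ on the top and bottom faces with $|\langle f\rangle_{R,\eta_R}|\ge\varepsilon$, and then builds the slab Gibbs measure as a subsequential weak limit of these finite-volume measures extended by identity matrices outside $S_R$; this makes the DLR verification immediate.
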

\begin{proof}
A different way to state the claim is that 
\[
\lim_{R\to \infty} \sup_{f,\delta} |\smallavg{f}_{R,\delta}|=0,
\]
where the supremum is taken over all boundary conditions $\delta$ on $S_R$ and all vertical chain variables $f$ associated with $\xi$. Suppose that the claim is not true. Then the above version shows that there is a real number $\ve >0$, a sequence of integers $R_k\to\infty$,  a sequence of vertical chain variables $\{f_k\}_{k\ge 1}$ associated with $\xi$, and a boundary condition $\delta_k$ on  $S_{R_k}$ for each $k$, such that $|\smallavg{f_k}_{R_k,\delta_k}|\ge \ve$ for all $k$. Let $\delta_{k,e}$ denote the group element assigned by $\delta_k$ to an edge $e$ on the boundary of $S_{R_k}$. Passing to a subsequence if necessary, we may assume the following: 
\begin{itemize}
\item For each edge $e$ in the boundary of the infinite slab $S$, $\delta_{k,e}$ tends to a limit $\delta_e$ as $k\to\infty$. This defines a boundary condition $\delta$ on $S$. 
\item For each $k$, $f_k$ is the same variable $f$. We can assume this since there are only a finite number of vertical chain variables associated with $\xi$.
\end{itemize}
Now take any $R\ge 1$, and any $k$ so large that $R_k\ge R$. Since $|\smallavg{f}_{R_k,\delta_k}|\ge \ve$, and $\smallavg{f}_{R_k,\delta_k}$ is the weighted average of $\smallavg{f}_{R,\eta}$ over all boundary conditions $\eta$ on $S_R$ that agree with $\delta_k$ on the top and bottom faces, it follows that there is a boundary condition $\eta_k$ for which $|\smallavg{f}_{R,\eta_k}|\ge \ve$. Passing to a subsequence if necessary, we may assume that $\eta_k$ converges to a limit $\eta$ as $k\to\infty$. Since $\eta_k$ agrees with $\delta_k$ on the top and bottom faces of $S_R$, and $\delta_{k,e}\to \delta_k$ for each edge $e$ on the boundary of $S$, it follows that $\eta$ must agree with $\delta$ on the top and bottom faces of $S_R$. 

Thus, we have shown that for any $R$, there is some boundary condition $\eta_R$ on $S_R$ that agrees with $\delta$ on the top and bottom faces, such that $|\smallavg{f}_{R,\eta_R}|\ge \ve$. 

Recall the set $\Omega(S,\delta)$ consisting of all assignments of group elements to edges in $S$ that agree with $\delta$ on the boundary of $S$. Since $G$ is compact, $\Omega(S,\delta)$ is a compact metric space under the product topology. For each $R$, define a  $\Omega(S,\delta)$-valued random configuration $\omega$  as follows:
\begin{itemize}
\item For each edge $e$ outside $S_R$, let $\omega_e$ be the identity element of $G$.
\item On the boundary of $S_R$, let $\omega$ be equal to $\eta_R$.
\item Inside $S_R$, generate $\omega$ from the lattice gauge theory on $S_R$ with boundary condition $\eta_R$.
\end{itemize}
By the compactness of $\Omega(S,\delta)$ and Prokhorov's theorem~\cite{billingsley99}, this sequence of random configurations converges in law through a subsequence. The limit law is easily verified to be a Gibbs measure for our lattice gauge theory on $S$ with boundary condition $\delta$. Since $f$ is a bounded continuous function on $\Omega(S,\delta)$, it follows that the expectation of $f$ under this Gibbs measure has absolute value at least $\ve$. 

However, under the center transform described in Subsection \ref{cssec} using our chosen element $g_0$, $f$ transforms to $cf$. So if the center symmetry assumption holds, then we must have $\smallavg{f} = c\smallavg{f}$. Since $c\ne 1$, this implies that $\smallavg{f}=0$. This contradicts the conclusion of the previous paragraph.
\end{proof}


We will now bootstrap the bound from Lemma \ref{vertchainlmm} to get a bound on expected values of vertical chain variables in finite slabs of arbitrary thickness. 

\begin{lmm}\label{bootlmmnew}
Take any $T\ge N$ and $R\ge 1$. Let $\delta$ be any boundary condition on the slab $\{0,\ldots,T\}\times \{-R,\ldots,R\}^{d-1}$, and consider the lattice gauge theory on this slab with this boundary condition.  Then for any vertical chain variable $f$ defined on the vertical chain $\xi$ containing the origin, we have $|\smallavg{f}| \le e^{-V(R)[T/N]}$, 
where $V$ is the function from Lemma \ref{vertchainlmm} and $[T/N]$ is the integer part of $T/N$.
\end{lmm}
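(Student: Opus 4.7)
The plan is to slice the thick slab into horizontal layers of thickness $N$, use the Markov property of the specification to decouple the layers once the horizontal ``interface'' edges are conditioned on, and then apply Lemma \ref{vertchainlmm} inside each layer.

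Concretely, I would set $k := [T/N]$ and for $i = 0,1,\ldots,k-1$ let
$$ S_i := \{iN, iN+1, \ldots, (i+1)N\} \times \{-R, \ldots, R\}^{d-1}, $$
each a translate of the reference slab $S_R$ appearing in Lemma \ref{vertchainlmm}. I would also write $S_{\mathrm{top}} := \{kN, \ldots, T\} \times \{-R, \ldots, R\}^{d-1}$, which has height at most $N-1$ (possibly zero). The vertical chain $\xi$ through the origin contains exactly $N$ edges in each $S_i$ and at most $N-1$ edges in $S_{\mathrm{top}}$, so the chain variable factors as $f = f_0 f_1 \cdots f_{k-1}\cdot f_{\mathrm{top}}$, where $f_i$ is the product of the matrix entries prescribed by $f$ over the edges of $\xi$ lying inside $S_i$, and $f_{\mathrm{top}}$ is the analogous product in $S_{\mathrm{top}}$. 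After translating $S_i$ down to $S_R$, each $f_i$ is a vertical chain variable associated with the chain through the origin in the sense of Lemma \ref{vertchainlmm}; meanwhile $|f_{\mathrm{top}}| \le 1$ pointwise because entries of unitary matrices have modulus at most $1$.

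Next I would condition on $\mathcal{F}$, the $\sigma$-algebra generated by $\omega_e$ for $e$ ranging over the horizontal edges at heights $N, 2N, \ldots, kN$, together with the fixed data $\delta$. A quick check shows that every plaquette of $\zz^d$ spans at most two consecutive heights, and the edges at each interface height $jN$ lie simultaneously on the boundaries of $S_{j-1}$ and $S_j$. Therefore each plaquette is entirely contained in one of the layers $S_0,\ldots,S_{k-1},S_{\mathrm{top}}$, and once $\mathcal{F}$ and $\delta$ are fixed the Hamiltonian decomposes as a sum of per-layer Hamiltonians. Correspondingly, the conditional distribution given $\mathcal{F}$ factorizes: the interiors of the layers are independent, and the conditional law on the interior of $S_i$ is exactly the lattice gauge theory on $S_i$ under the boundary condition read off from $\mathcal{F}$ and $\delta$. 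Hence
$$ \E[f \mid \mathcal{F}] = \E[f_{\mathrm{top}} \mid \mathcal{F}] \prod_{i=0}^{k-1} \E[f_i \mid \mathcal{F}]. $$
By translation invariance of the specification and Lemma \ref{vertchainlmm}, $|\E[f_i \mid \mathcal{F}]| \le e^{-V(R)}$ almost surely for each $i$, while trivially $|\E[f_{\mathrm{top}} \mid \mathcal{F}]| \le 1$. Taking moduli and then the outer expectation gives
$$ |\smallavg{f}| \le \E\bigl[|\E[f \mid \mathcal{F}]|\bigr] \le e^{-k V(R)} = e^{-V(R)[T/N]}, $$
as desired.

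The only nontrivial ingredient is the Markov-style decoupling after conditioning on $\mathcal{F}$: one must confirm that no plaquette straddles two layers, so that the Gibbs density factorizes cleanly across layers once the interface and external boundary edges are fixed. This is straightforward since each plaquette is a unit square, but it is the geometric point where the placement of the interface heights at integer multiples of $N$ really matters.
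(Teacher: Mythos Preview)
Your argument is correct and is essentially the same as the paper's: both subdivide the slab into $[T/N]$ sub-slabs of thickness $N$ plus a remainder, condition on the edges in the interface faces to decouple the layers via the Markov property, apply Lemma~\ref{vertchainlmm} in each full layer, and bound the remainder piece trivially by $1$. Your explicit check that no plaquette straddles two layers is a nice touch that the paper leaves implicit.
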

\begin{proof}
Write $T= qN + k$, where $q = [T/N]$ and $k<N$ are the quotient and the remainder when $T$ is divided  by $N$. Divide the slab into $q+1$ slabs $S_1,S_2,\ldots,S_{q+1}$, where $S_i$ has thickness $N$ for $i=1,\ldots,q$, and $S_{q+1}$ has thickness $k$. Let the top face of $S_i$, which is the same as the bottom face of $S_{i+1}$, be called $F_i$. Conditioning on $\omega_e$ for all $e\in \cup_{i=1}^{q+1}F_i$, the lattice gauge theory on the slab splits up into independent lattice gauge theories in the slabs $S_1,\ldots,S_{q+1}$. The vertical chain observable $f$ also splits up as a product $f_1f_2\cdots f_{q+1}$, where $f_i$ is the part of $f$ coming from $S_i$. Under the above conditioning, the random variables $f_1,\ldots,f_{q+1}$ are independent. (See Figure \ref{bootpicnew}.)

\begin{figure}[t!]
\begin{center}
\begin{tikzpicture}[scale = 1]
\draw[thick] (0,0) to (10,0);
\draw[thick] (0,1) to (10,1);
\draw[thick] (0,2) to (10,2);
\draw[thick] (0,3) to (10,3);
\draw[thick] (0,4) to (10,4);
\draw[thick] (0,0) to (0,4);
\draw[thick] (10,0) to (10,4);
\draw[dashed] (5,0) to (5,4);
\node at (5.25,.5) {$f_1$};
\node at (5.25,1.5) {$f_2$};
\node at (5.25,2.5) {$f_3$};
\node at (5.25,3.5) {$f_4$};
\draw [thick, decorate, decoration = {brace, amplitude = 4pt}] (-.1,0) to (-.1,1);
\draw [thick, decorate, decoration = {brace, mirror, amplitude = 7pt}] (10.1,0) to (10.1,4);
\draw [thick, decorate, decoration = {brace, mirror, amplitude = 9pt}] (0,-.1) to (10,-.1);
\node at (5, -.7) {$R$};
\node at (10.7, 2) {$T$};
\node at (-.65, .5) {$N$};
\draw[fill] (5,0) circle [radius = .07];
\draw[fill] (5,1) circle [radius = .07];
\draw[fill] (5,2) circle [radius = .07];
\draw[fill] (5,3) circle [radius = .07];
\draw[fill] (5,4) circle [radius = .07];
\end{tikzpicture}
\caption{Bootstrapping by subdividing into smaller slabs. \label{bootpicnew}}
\end{center}
\end{figure}
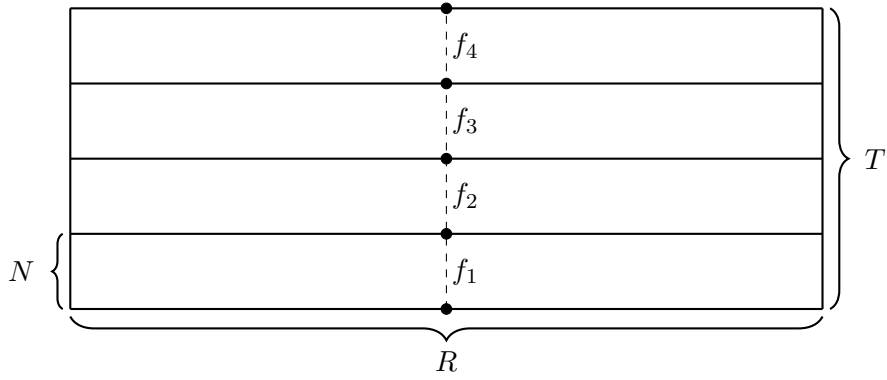

Now let $\smallavg{f_i}'$ denote the conditional expectation of $f_i$ under the above conditioning. For each $1\le i\le q$, Lemma \ref{vertchainlmm} gives $|\smallavg{f_i}'| \le e^{-V(R)}$. Also, since $\pi$ is a unitary representation, it follows that $|\smallavg{f_{q+1}}'|\le 1$. Thus,
\begin{align*}
|\smallavg{f}|&= |\smallavg{f_1f_2\cdots f_{q+1}}|\\
&= |\smallavg{\smallavg{f_1f_2\cdots f_{q+1}}'}|\\
&= |\smallavg{\smallavg{f_1}'\smallavg{f_2}'\cdots \smallavg{f_{q+1}}'}|\\
&\le \smallavg{|\smallavg{f_1}'||\smallavg{f_2}'|\cdots |\smallavg{f_{q+1}}'|}\le  e^{-V(R)q}.
\end{align*}
This completes the proof of the lemma. 
\end{proof}

We are now ready to complete the proof of Theorem \ref{main1}.
\begin{proof}[Proof of Theorem \ref{main1}]
Let $e_1,\ldots,e_k$ be the edges of $\ell$, so that $k=2(R+T)$. Let $f(\omega)$ be a product of one element from each of the matrices $\pi(\omega_{e_1}),\ldots,\pi(\omega_{e_k})$. Let us call any such $f$ a {\it component variable} associated with the loop $\ell$. Note that the Wilson loop variable $W_\ell$ is a sum of $m^{k}$ component variables. 

Let $S$ be a translate of the slab $\{0,\ldots,T\}\times \{-R,\ldots, R\}^{d-1}$ such that one of the vertical sides of $\ell$ passes through the center of $S$ and the other three sides belong to the boundary of $S$. Take any component variable $f$ associated with $\ell$. Then $f$ can be written as the product $f_1f_2$, where $f_1$ is the product of terms from the vertical side that passes through the center of $S$, and $f_2$ is the product of terms from the other three sides. Let $\smallavg{\cdot}'$ denote conditional expectation given $\omega_e$ for all $e$ that are either on the boundary of $S$ or outside $S$. Then
\begin{align}\label{f1f2idnew}
\smallavg{f} = \smallavg{f_1f_2} = \smallavg{\smallavg{f_1f_2}'} = \smallavg{\smallavg{f_1}' f_2},
\end{align}
where the second identity holds by the tower property of conditional expectation and the third identity holds because $f_2$ depends only on $\omega_e$ for edges $e$ that are on the boundary of $S$. Now note that $f_1$ is a vertical chain variable associated with a vertical chain passing through the center of $S$. Thus, by Lemma \ref{bootlmmnew}, 
\begin{align}\label{f1vr}
|\smallavg{f_1}'|&\le e^{ - V(R)[T/N]}.
\end{align}
Since $\pi$ is a unitary representation, we have $|f_2|\le 1$. Thus, by \eqref{f1f2idnew} and \eqref{f1vr}, we get the bound $|\smallavg{f}|\le e^{-V(R)[T/N]}$.  Since $W_\ell$ is a sum of $m^{2(R+T)}$ component variables, this gives
\[
|\smallavg{W_\ell}|\le m^{2(R+T)}e^{ - V(R)[T/N]}.
\]
Now recall that $R\le T$, and $[x]\ge x/2$ for any $x\ge 1$. Thus, 
\[
|\smallavg{W_\ell}|\le m^{4T}e^{ - V(R)T/2N}.
\]
In other words, $|\smallavg{W_\ell}| \le e^{-V_1(R)T}$, where $V_1(R) = -4\log m +V(R)/2N$. Since $V_1(R)\to \infty$ as $R\to \infty$, this completes the proof.
\end{proof}

\section{Paths in the gauge group}
Let us denote the Euclidean norm of a vector $x\in \cc^n$ by $\|x\|$, and the Euclidean inner product of two vectors $x$ and $y$ by $x\cdot y$. Let $M_n(\cc)$ denote the space of all $n\times n$ complex matrices. We will identify $M_n(\cc)$ with $\rr^{2n^2}$ as a real manifold and view $G$ as a subset of $M_n(\cc)$. If $f:M_n(\cc)\to \rr$ is a smooth function, $\nabla f$ will denote the gradient of $f$, viewing $f$ as a function from $\rr^{2n^2}$ into~$\rr$. Lastly, for a matrix $A\in M_n(\cc)$, let $\|A\|$ denote the Euclidean (or Hilbert--Schmidt) norm of $A$ --- that is, the square-root of the sum of squared absolute values of the entries of~$A$. 
\begin{lmm}\label{pathlmm}
There is a finite constant $K$, depending only on $G$, such that the following is true. For any $g, h\in G$, there is a continuous path $\phi:[0,1]\to G$ such that $\phi(0)=g$, $\phi(1)=h$, $\phi$ is piecewise smooth with  a finite number of pieces, and $\|\phi'(t)\|\le K$ everywhere inside the smooth pieces.
\end{lmm}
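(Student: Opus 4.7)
The key observation is that $G$, being closed in the compact group $U(n)$, is itself compact, and by Cartan's closed subgroup theorem it is a Lie subgroup, hence a smoothly embedded compact submanifold of $M_n(\cc) \cong \rr^{2n^2}$. I equip $G$ with the Riemannian metric obtained by restricting the Hilbert--Schmidt inner product on $M_n(\cc)$ to the tangent spaces of $G$. With this convention, the Riemannian length of a piecewise smooth curve $\psi:[a,b]\to G$ is exactly $\int_a^b \|\psi'(t)\|\,dt$ in the ambient norm used in the statement of the lemma. The whole problem then reduces to a uniform bound on the intrinsic diameter of $G$.

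\textbf{Steps.} For $g,h\in G$, define
\[
d(g,h) := \inf\biggl\{\int_0^1 \|\psi'(t)\|\,dt : \psi:[0,1]\to G \text{ piecewise smooth},\ \psi(0)=g,\ \psi(1)=h\biggr\}.
\]
To see that $d(g,h)<\infty$ for every pair, I would invoke the exponential map $\exp:\mathfrak{g}\to G$, which is a smooth local diffeomorphism near the origin; this produces short piecewise smooth paths between any two sufficiently close points of $G$, and combined with path-connectedness of the connected manifold $G$ it yields finiteness of $d$ everywhere. Next, I would use the standard fact that $d$ is continuous on $G\times G$ and induces the subspace topology on $G$. Since $G\times G$ is compact, the diameter $D:=\sup_{g,h\in G}d(g,h)$ is finite and depends only on $G$.

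\textbf{Finishing and main obstacle.} Given arbitrary $g,h\in G$, I would pick a piecewise smooth path (with finitely many pieces) from $g$ to $h$ of length at most $D+1$, and reparametrize it at constant speed on $[0,1]$ to obtain a piecewise smooth $\phi:[0,1]\to G$ with $\phi(0)=g$, $\phi(1)=h$, and $\|\phi'(t)\|\le D+1$ on every smooth piece. Setting $K:=D+1$ then completes the proof. The only nontrivial ingredient is finiteness of the intrinsic diameter $D$; if one prefers to avoid appealing to continuity of the Riemannian distance, an alternative route is to cover $G$ by finitely many translates of a single exponential chart (possible by compactness), bound the number of such charts needed to link any two points, and chain together uniformly short local paths through intermediate group elements. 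Either way the resulting $K$ depends only on $G$, as required.
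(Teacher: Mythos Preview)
Your argument is correct and takes a genuinely different route from the paper. You invoke the abstract fact that a compact connected Riemannian manifold has finite intrinsic diameter, then reparametrize a near-minimizing curve at constant speed; the paper instead works by hand, covering $G$ by finitely many left-translates $U_\ve(g_i)$ of a small exponential neighborhood of the identity, observing that the nerve of this cover is connected, and then chaining together at most $2k+2$ explicit one-parameter paths $t\mapsto g_i e^{tX}$ with $\|X\|<\ve'$. Amusingly, the ``alternative route'' you sketch in your last paragraph is exactly what the paper does. Your approach is shorter and more conceptual, but leans on background facts (continuity of the Riemannian distance, or better, Hopf--Rinow to produce an actual minimizing geodesic) that the paper avoids; the paper's approach is more elementary and self-contained, and gives explicit control on both the number of pieces and the derivative bound in terms of the cover. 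One small point: your constant-speed reparametrization step is cleanest if you simply invoke Hopf--Rinow to get a smooth minimizing geodesic (then there is a single piece and $\|\phi'(t)\|=d(g,h)\le D$); otherwise the arc-length reparametrization of a merely piecewise smooth curve requires a word about points where the speed vanishes.
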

\begin{proof}
Fix some $\ve>0$, to be chosen later. Let 
\[
U_\ve := \{g\in G: \|g-I\|< \ve\}.
\]
For any $g\in G$, let $U_\ve(g) := \{gh: h\in U_\ve\}$. Each $U_\ve(g)$ is a relatively open subset of $G$, and these open sets cover $G$. So by the compactness of $G$, there exist finitely many $g_1,\ldots, g_k\in G$ such that 
\[
G = \bigcup_{i=1}^k U_\ve(g_i).
\]
Let us put a graph structure on $\{U_\ve(g_i)\}_{1\le i\le k}$ by putting an edge between $U_\ve(g_i)$ and $U_\ve(g_j)$ when $U_\ve(g_i) \cap U_\ve(g_j)\ne \emptyset$. If this graph has more than one connected component, then $G$ can be written as a union of two disjoint nonempty relatively open sets, which is impossible since $G$ is connected. Thus, the graph constructed above must be connected.

Take any $g,h\in G$. Find $i$ and $j$ such that $g\in U_\ve(g_i)$ and $h\in U_\ve(g_j)$. By the above paragraph, there is a sequence $i=i_0, i_1,\ldots,i_m = j$ such that $U_\ve(g_{i_a})\cap U_\ve(g_{i_{a+1}})\ne \emptyset$ for each $a$.  Choose some $h_a\in U_\ve(g_{i_a})\cap U_\ve(g_{i_{a+1}})$ for $a = 0,1,\ldots,m-1$. (See Figure \ref{liepic}.)

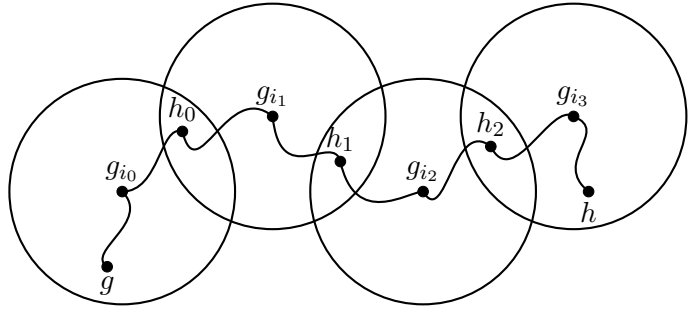
\begin{figure}[t!]
\begin{center}
\begin{tikzpicture}[scale = 1]
\draw [fill] (0,0) circle [radius = .07] node [black, above] {$g_{i_0}$};
\draw [thick] (0,0) circle [radius = 1.5];
\draw [fill] (.8,.8) circle [radius = .07] node [black, above] {$h_0$};
\draw [thick] (2,1) circle [radius = 1.5];
\draw [fill] (2,1) circle [radius = .07] node [black, above] {$g_{i_1}$};
\draw [fill] (2.9,.4) circle [radius = .07] node [black, above] {$h_1$};
\draw [fill] (4,0) circle [radius = .07] node [black, above] {$g_{i_2}$};
\draw [thick] (4,0) circle [radius = 1.5];
\draw [fill] (6,1) circle [radius = .07] node [black, above] {$g_{i_3}$};
\draw [thick] (6,1) circle [radius = 1.5];
\draw [fill] (4.9,.6) circle [radius = .07] node [black, above] {$h_2$};
\draw [fill] (-.2,-1) circle [radius = .07] node [black, below] {$g$};
\draw [fill] (6.2,0) circle [radius = .07] node [black, below] {$h$};
\draw [thick] (-.2,-1) .. controls (-.5,-.8) and (.4,-.4) .. (0,0);
\draw [thick] (0,0) .. controls (.5,0) and (.5,.9) .. (.8,.8);
\draw [thick] (.8,.8) .. controls (1,0) and (1.4,1.5) .. (2,1);
\draw [thick] (2,1) .. controls (2.1,0) and (2.8,.8) .. (2.9,.4);
\draw [thick] (2.9,.4) .. controls (3.1,-.5) and (3.8,0) .. (4,0);
\draw [thick] (4,0) .. controls (4.3,-.5) and (4.4,1) .. (4.9,.6);
\draw [thick] (4.9,.6) .. controls (5.2,0) and (5.7,1.2) .. (6,1);
\draw [thick] (6,1) .. controls (6.5,.8) and (5.7,.4) .. (6.2,0);
\end{tikzpicture}
\caption{A piecewise smooth path from $g$ to $h$, with $m=3$.\label{liepic}}
\end{center}
\end{figure}

By the closed subgroup theorem of Lie theory, there exist $\ve, \ve' >0$ such that for any $g\in U_\ve$, there is some $X\in M_n(\cc)$ with $\|X\|< \ve'$, such that $e^X = g$ and $e^{tX}\in G$ for all $t\in \rr$.  (For example, see \cite[Corollary 3.44]{hall15}.) Let this $\ve$ be our chosen $\ve$.

Take any $0\le a\le m-1$. Since $h_a \in U_\ve(g_{i_a})$, there is some $u_a \in U_\ve$ such that $h_a = g_{i_a}u_a$. By the previous paragraph, $u_a = e^{X_a}$ for some  $X_a\in M_n(\cc)$ with $\|X_a\|<\ve'$,  such that $e^{tX_a}\in G$ for all $t\in \rr$. Define a path $\phi_a$ from $g_{i_a}$ to $h_a$ as $\phi_a(t) := g_{i_a}e^{tX_a}$, for $0\le t\le 1$. This is a smooth path in $G$ connecting $g_{i_a}$ to $h_a$, and for each $t$, 
\[
\|\phi_a'(t)\| = \|g_{i_a}X_ae^{tX_a}\|=\|X_a\|\le \ve',
\]
where the second equality holds because $g_{i_a}$ and $e^{tX_a}$ are unitary matrices.

We can similarly construct paths connecting $h_a$ to $g_{i_{a+1}}$ for each $a$, and paths connecting $g$ to $g_i$ and $g_j$ to $h$. These paths will all satisfy the above bound on the norm of the derivative. Therefore if we join these $2m+2$ paths, we get a piecewise smooth path $\phi:[0,2m+2] \to G$ connecting $g$ to $h$, such that $\|\phi'(t)\|\le \ve'$ for all $t$ inside the smooth pieces. Rescaling the parameter, we get a piecewise smooth $\phi:[0,1]\to G$ connecting $g$ to $h$ such that $\|\phi'(t)\|\le (2m+2)\ve'$ for all $t$ inside the smooth pieces. Since $m\le k$, and $k$ and $\ve'$ depend only on $G$, this completes the proof. 
\end{proof}
As a consequence of the above lemma, we obtain the following corollary, which says that if $f:M_n(\cc) \to \rr$ is a smooth function, then the diameter of the set $f(G)$ is bounded by a constant multiple of the maximum value of $\|\nabla f\|$ on $G$. 
\begin{cor}\label{liecor}
Let $K$ be as in Lemma \ref{pathlmm}. Let $f:M_n(\cc)\to \rr$ be a smooth function. Then for any $g,h\in G$, $|f(g)-f(h)|\le K \|\nabla f\|_G$, where $\|\nabla f\|_G := \sup\{\|\nabla f(g)\|: g\in G\}$.
\end{cor}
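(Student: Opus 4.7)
The plan is to produce a bounded-derivative path from $g$ to $h$ using Lemma \ref{pathlmm} and then integrate $f$ along it using the fundamental theorem of calculus. First I would invoke Lemma \ref{pathlmm} to obtain a continuous, piecewise smooth path $\phi:[0,1]\to G$ with finitely many smooth pieces, $\phi(0) = g$, $\phi(1) = h$, and $\|\phi'(t)\|\le K$ on the interior of every smooth piece. Since $\phi$ takes values in $G\subset M_n(\cc)\cong \rr^{2n^2}$ and $f$ is smooth on the ambient $M_n(\cc)$, the composition $f\circ\phi:[0,1]\to \rr$ is continuous and piecewise smooth with the same subdivision.

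Next I would apply the fundamental theorem of calculus piece by piece and add up, writing
\[
f(h) - f(g) = \int_0^1 \frac{d}{dt} f(\phi(t))\,\dd t,
\]
where the integrand is defined and bounded at all but finitely many points, so measurability and integrability are immediate. On each smooth piece, the chain rule gives $\tfrac{d}{dt} f(\phi(t)) = \nabla f(\phi(t))\cdot \phi'(t)$, where the dot is the Euclidean inner product on $\rr^{2n^2}$ and $\phi'(t)$ is the real vector corresponding to the matrix derivative. Under the identification $M_n(\cc)\cong \rr^{2n^2}$, the Hilbert--Schmidt norm of a matrix matches the Euclidean norm of its real representation, so the Cauchy--Schwarz inequality yields
\[
\left| \tfrac{d}{dt} f(\phi(t))\right| \le \|\nabla f(\phi(t))\|\cdot \|\phi'(t)\| \le \|\nabla f\|_G\cdot K
\]
wherever $\phi$ is smooth.

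Finally I would integrate this pointwise bound over $[0,1]$ to obtain $|f(h) - f(g)| \le K\|\nabla f\|_G$, which is the desired inequality. There is no genuine obstacle here: the whole content lies in Lemma \ref{pathlmm}, which has already supplied a curve with uniformly controlled speed between any two points of $G$. The only bookkeeping to watch is the compatibility between the matrix norm used in Lemma \ref{pathlmm} and the Euclidean structure used to define $\nabla f$, which is immediate from the identification $M_n(\cc)\cong \rr^{2n^2}$ fixed at the start of the section.
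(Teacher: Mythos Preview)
Your proposal is correct and follows essentially the same approach as the paper: invoke Lemma~\ref{pathlmm}, apply the fundamental theorem of calculus along the piecewise smooth path, bound the integrand via the chain rule and Cauchy--Schwarz, and integrate. The only cosmetic difference is that the paper writes the integral as a sum over the smooth pieces $[t_i,t_{i+1}]$ rather than as a single integral over $[0,1]$ with finitely many exceptional points.
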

\begin{proof}
Take any $g,h\in G$. Let $\phi$ be as in Lemma \ref{pathlmm}. There are numbers $0=t_0<t_1<t_2<\cdots < t_k =1$ for some $k$, such that $\phi$ is smooth in each interval $(t_i, t_{i+1})$. By the continuity of $\phi$,
\begin{align*}
f(h)- f(g) &= \sum_{i=0}^{k-1} (f(\phi(t_{i+1})) - f(\phi(t_i)))\\
&= \sum_{i=0}^{k-1} \int_{t_i}^{t_{i+1}} \nabla f(\phi(t)) \cdot \phi'(t) dt.
\end{align*}
Since 
\[
|\nabla f(\phi(t)) \cdot \phi'(t)|\le \|\nabla f(\phi(t))\| \|\phi'(t)\|\le K\|\nabla f\|_G,
\]
this completes the proof. 
\end{proof}

\section{Total variation distance}\label{tvsec}
Let $\mx$ be a Polish space equipped with  its Borel $\sigma$-algebra $\mb(\mx)$. Let $\mu$ and $\nu$ be two probability measures on $\mx$. Recall that the {\it total variation distance} between $\mu$ and $\nu$ is defined as
\begin{align*}
TV(\mu,\nu) := \sup_{A\in \mb(\mx)} |\mu(A)-\nu(A)|.
\end{align*}
The total variation distance has the following alternate representations (for proofs and discussion, see \cite[Chapter 4]{lpw09}). First, we have
\begin{align}
TV(\mu,\nu) &=\frac{1}{2} \sup\biggl\{\biggl|\int_{\mx} f(x)d\mu(x) - \int_{\mx} f(x)d\nu(x)\biggr|: \notag \\
&\qquad \qquad \text{ $f$ is a measurable map from $\mx$ into $[-1,1]$}\biggr\}.\label{tvalt}
\end{align}
Next, suppose that $\mu$ and $\nu$ are both absolutely continuous with respect to a $\sigma$-finite measure $\alpha$. Let $f$ and $g$ be the Radon--Nikodym derivatives of $\mu$ and $\nu$ with respect to $\alpha$. The total variation distance between $\mu$ and $\nu$ can be alternately expressed as
\begin{align}\label{tvform}
TV(\mu,\nu) = \frac{1}{2}\int_{\mx} |f(x)-g(x)|d\alpha(x). 
\end{align}
Finally, recall also the {\it coupling characterization} of total variation distance: If $\gamma$ is a probability measure on the product space $\mx\times \mx$, the {\it marginal probabilities} of $\gamma$ are the probability measures $\gamma_1$ and $\gamma_2$ on $\mx$ defined as
\[
\gamma_1(A) := \gamma(A\times \mx), \ \ \ \gamma_2(A) := \gamma(\mx\times A). 
\]
A {\it coupling} of $\mu$ and $\nu$ is a probability measure $\gamma$ on $\mx \times \mx$ whose marginal probabilities are $\mu$ and $\nu$. The coupling characterization of total variation distance says that
\begin{align}\label{tvinfform}
TV(\mu,\nu) = \inf\{ \gamma(\{(x,y): x\ne y\}): \gamma \text{ is a coupling of $\mu$ and $\nu$}\}. 
\end{align}
There is a standard formula for a coupling that attains the infimum in the above formula. It is as follows.  Let $\alpha$ be a $\sigma$-finite  measure such that both $\mu$ and $\nu$ are absolutely continuous with respect to $\alpha$. (For example, we can always take $\alpha = \mu+\nu$.) Let $f$ and $g$ be the Radon--Nikodym derivatives of $\mu$ and $\nu$ with respect to $\alpha$. Define 
\begin{align}\label{f1g1h}
\begin{split}
h(x) &:= \min\{f(x), g(x)\}, \\  
f_1(x) &:= f(x)-h(x), \  \ \ g_1(x) := g(x)-h(x). 
\end{split}
\end{align}
Note that $\int (f_1 - g_1)d\alpha = \int (f-g)d\alpha = 0$, and $f_1 + g_1 = |f-g|$. Thus, by \eqref{tvform},
\begin{align}\label{f1g1eq}
\int f_1(x) d\alpha (x) = \int g_1(x)d\alpha(x) = TV(\mu,\nu). 
\end{align}
For any $S\in \mb(\mx\times \mx)$, the set $\tilde{S} := \{x\in \mx: (x,x)\in S\}$ is measurable since the map $x\mapsto (x,x)$ is continuous and hence measurable.   So we can legitimately define, for any $S\in \mb(\mx\times \mx)$, 
\begin{align}\label{gammadef}
\gamma(S) := \int_{\tilde{S}} h(x) d\alpha(x) + \frac{1}{TV(\mu,\nu)}\int_{S} f_1(x) g_1(y) d\alpha^{\otimes 2}(x,y),
\end{align}
where $\alpha^{\otimes 2}$ stands for the product measure $\alpha \times \alpha$ on $\mx\times \mx$, and the second term is interpreted as zero if $TV(\mu,\nu)=0$. It is not hard to check that $\gamma$ is a probability measure on $\mx\times \mx$, and that it is a coupling of $\mu$ and $\nu$ which attains the infimum in~\eqref{tvinfform}. Also, it is not hard to check that $\gamma$ does not depend on the choice of $\alpha$. 

The reason why we took the trouble of writing down the explicit form of $\gamma$ is that we will need it in  the following lemma. It says that if $(\mu, \nu)$ and $(\mu',\nu')$ are pairs of probability measures such that $\mu$ is close to $\mu'$ and $\nu$ is close to $\nu'$ in total variation distance, then the optimal coupling of $\mu$ and $\nu$ is close to the optimal coupling $\mu'$ and $\nu'$ in total variation distance. I could not find this result in the literature, so a complete proof is given below. 
\begin{lmm}\label{tvmainlmm}
Let $\mu$, $\nu$, $\mu'$ and $\nu'$ be probability measures on a Polish space $\mx$. Let $\gamma$  be an optimal coupling of $\mu$ and $\nu$, and let $\gamma'$ be an optimal coupling of $\mu'$ and $\nu'$,  both defined according to the formula \eqref{gammadef}. Then
\[
TV(\gamma, \gamma') \le 10 \sqrt{\max\{TV(\mu,\mu'), TV(\nu, \nu')\}}. 
\]
\end{lmm}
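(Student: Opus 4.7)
The plan is to fix a common dominating measure, say $\alpha := \mu + \nu + \mu' + \nu'$, write $f, g, f', g'$ for the Radon--Nikodym derivatives of $\mu, \nu, \mu', \nu'$ with respect to $\alpha$, and use the notation $h, f_1, g_1$ of \eqref{f1g1h} together with the obvious primed analogues. Write $t := TV(\mu,\nu)$, $t' := TV(\mu',\nu')$, and $\epsilon := \max\{TV(\mu,\mu'), TV(\nu,\nu')\}$. The triangle inequality for $TV$ gives $|t - t'| \le TV(\mu,\mu') + TV(\nu,\nu') \le 2\epsilon$.

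The structural point is that the formula \eqref{gammadef} decomposes $\gamma$ as $\gamma = \gamma_d + \gamma_o$, where $\gamma_d$ is concentrated on the diagonal $D := \{(x,x) : x \in \mx\}$ with density $h$ along it, and $\gamma_o$ has $\alpha^{\otimes 2}$-density $f_1(x) g_1(y)/t$. The key observation is that $f_1 g_1 \equiv 0$ pointwise, so $\gamma_o(D) = 0$; hence $\gamma_d$ and $\gamma_o$ are mutually singular. The same decomposition holds for $\gamma'$. For any measurable $S \subseteq \mx^2$, $\gamma(S) - \gamma'(S) = (\gamma_d - \gamma'_d)(S \cap D) + (\gamma_o - \gamma'_o)(S \cap D^c)$, so taking suprema over $S\cap D$ and $S \cap D^c$ independently gives
\[
TV(\gamma, \gamma') \le \tfrac{1}{2}\int_\mx |h - h'|\, d\alpha + \tfrac{1}{2}\int_{\mx^2} \Bigl|\tfrac{f_1(x) g_1(y)}{t} - \tfrac{f'_1(x) g'_1(y)}{t'}\Bigr|\, d\alpha^{\otimes 2}(x,y).
\]

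For the diagonal summand, the elementary bound $|\min(a,b) - \min(c,d)| \le |a - c| + |b - d|$ yields $\int |h - h'|\, d\alpha \le 2\, TV(\mu,\mu') + 2\, TV(\nu,\nu') \le 4\epsilon$. For the off-diagonal summand, the main obstacle is the $1/t$ factor, which is large when $t$ is small; I would handle it with a dichotomy at the scale $\sqrt{\epsilon}$. If $\max\{t, t'\} \le \sqrt{\epsilon}$, then $\gamma_o$ and $\gamma'_o$ each have total mass at most $\sqrt{\epsilon}$, so the off-diagonal $L^1$ norm is at most $t + t' \le 2\sqrt{\epsilon}$. Otherwise $\max\{t, t'\} > \sqrt{\epsilon}$, and combined with $|t - t'| \le 2\epsilon$ this forces $\min\{t, t'\} \ge \sqrt{\epsilon}/2$ (for $\epsilon$ small enough; the bound is trivial otherwise since $TV \le 1$). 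In this regime I decompose
\[
\frac{f_1(x) g_1(y)}{t} - \frac{f'_1(x) g'_1(y)}{t'} = \frac{f_1(x) g_1(y) - f'_1(x) g'_1(y)}{t'} + f_1(x) g_1(y) \cdot \frac{t' - t}{t t'},
\]
then bound $\int |f_1 - f'_1|\, d\alpha \le 4\epsilon$ (and similarly for $g_1 - g'_1$) via $|(a - b)^+ - (c - d)^+| \le |a - c| + |b - d|$, deduce by inserting $\pm f_1 g'_1$ that $\int\int |f_1 g_1 - f'_1 g'_1|\, d\alpha^{\otimes 2} \le 4\epsilon(t + t')$, and observe that the second summand integrates to $t \cdot |t' - t|/t'$. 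Using $t, t' \ge \sqrt{\epsilon}/2$, the off-diagonal $L^1$ norm is again $O(\sqrt{\epsilon})$.

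Combining the two bounds gives $TV(\gamma, \gamma') = O(\sqrt{\epsilon})$, and keeping track of explicit constants produces the claimed inequality $TV(\gamma, \gamma') \le 10\sqrt{\epsilon}$. The main technical difficulty is controlling the off-diagonal piece in the small-denominator regime; the $\sqrt{\epsilon}$ threshold in the dichotomy is exactly what forces the square-root rate in the end. A sharper $O(\epsilon)$ estimate is in fact available by symmetrizing the decomposition between the roles of $1/t$ and $1/t'$, but the weaker square-root rate suffices for later use and keeps the argument clean. The degenerate case where $t$ or $t'$ vanishes is subsumed by the first branch of the dichotomy, or seen directly from the convention in \eqref{gammadef} that the corresponding off-diagonal term is interpreted as zero.
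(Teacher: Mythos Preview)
Your approach is essentially the same as the paper's: decompose $\gamma$ into its diagonal piece (with density $h$) and off-diagonal piece (with density $f_1(x)g_1(y)/t$), bound the diagonal contribution by $|\min(a,b)-\min(c,d)|\le |a-c|+|b-d|$, and handle the off-diagonal piece via a dichotomy on the size of $\max\{t,t'\}$ relative to $\sqrt{\epsilon}$, bounding it by total mass in the small regime and by an algebraic splitting in the large regime. Your use of the mutual singularity of $\gamma_d$ and $\gamma_o$ to earn the factor $\tfrac12$ is a clean touch the paper does not make explicit; the only loose end is that your final sentence ``keeping track of explicit constants produces the claimed inequality'' is not carried out, and with the particular decomposition and threshold you wrote down the constant comes out slightly above $10$ unless you also symmetrize in $t,t'$ as you hint---the paper chooses the threshold $\sqrt{7b}$ and the cruder bound $\int|f_1-f_1'|\le 6b$ precisely so that the two branches both land at $10\sqrt{b}$.
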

\begin{proof}
Let $\alpha$ be a $\sigma$-finite measure on $\mx$ such that $\mu$, $\nu$, $\mu'$ and $\nu'$ are all absolutely continuous with respect to $\alpha$. Let $f$ and $g$ be the probability density functions of $\mu$ and $\nu$ with respect to $\alpha$, and define $f_1$, $g_1$ and $h$ as in \eqref{f1g1h}. Let $f'$, $g'$, $h'$, $f_1'$ and $g_1'$ be the analogous functions for $\mu'$ and $\nu'$. Define
\begin{align*}
a := \max\{TV(\mu,\nu), TV(\mu',\nu')\},  \ \ b := \max\{TV(\mu, \mu'),TV(\nu, \nu')\}.
\end{align*}
Suppose that $TV(\mu,\nu)$ and $TV(\mu',\nu')$ are both nonzero. Then for $S\in \mb(\mx\times \mx)$,
\begin{align}
&|\gamma(S)-\gamma'(S)| \notag \\
&\le \int_{\tilde{S}} |h(x)-h'(x)|d\alpha(x) \notag\\
&\qquad  + \frac{1}{TV(\mu,\nu)}\int_S |f_1(x)g_1(y) - f_1'(x)g_1'(y)| d\alpha^{\otimes 2}(x,y)\notag \\
&\qquad + \biggl|\frac{1}{TV(\mu,\nu)}-\frac{1}{TV(\mu', \nu')}\biggr|\int_S f_1'(x)g_1'(y) d\alpha^{\otimes 2}(x,y).\label{tv0}
\end{align}
Let us now estimate the three terms on the right. By the inequality
\[
|\min\{u,v\} - \min\{u',v'\}|\le |u-u'| + |v-v'|,
\]
we get
\begin{align}\label{hhprime}
|h(x)-h'(x)|\le |f(x)-f'(x)|+|g(x)-g'(x)|. 
\end{align}
By \eqref{tvform}, this gives
\begin{align}\label{tv1}
\int_{\tilde{S}} |h(x)-h'(x)|d\alpha(x)&\le 4b. 
\end{align}
Again using \eqref{hhprime} and \eqref{tvform}, note that 
\[
\int_{\mx} |f_1(x)-f_1'(x)|d\alpha(x)\le 6b, \ \ \ \int_{\mx} |g_1(x)-g_1'(x)|d\alpha(x)\le 6b.
\]
Thus, 
\begin{align}
&\int_S |f_1(x)g_1(y) - f_1'(x)g_1'(y)| d\alpha^{\otimes 2}(x,y) \notag\\
&\le \int_{\mx\times \mx} (|f_1(x)-f_1'(x)| g_1(y) + f_1'(x)|g_1(y)-g_1'(y)|) d\alpha^{\otimes 2}(x,y)\notag\\
&\le 12b. \label{tv2}
\end{align}
Next, note that by the triangle inequality for total variation distance~\cite[Remark 4.4]{lpw09},
\begin{align*}
|TV(\mu,\nu)- TV(\mu',\nu')|\le TV(\mu,\mu') + TV(\nu,\nu')\le 2b.
\end{align*}
This gives
\begin{align}
&\biggl|\frac{1}{TV(\mu,\nu)}-\frac{1}{TV(\mu',\nu')}\biggr|\int_S f_1'(x)g_1'(y) d\alpha^{\otimes 2}(x,y)\notag\\
&\le \frac{2b}{TV(\mu,\nu)TV(\mu',\nu')} \int_{\mx\times \mx}  f_1'(x)g_1'(y) d\alpha^{\otimes 2}(x,y)\notag\\
&=  \frac{2b}{TV(\mu,\nu)TV(\mu',\nu')}TV(\mu',\nu')^2\le \frac{2b}{TV(\mu,\nu)}, \label{tv3}
\end{align}
where the identity in the last line holds by \eqref{f1g1eq}. 
Using the bounds \eqref{tv1}, \eqref{tv2} and \eqref{tv3} in \eqref{tv0}, we get
\[
TV(\gamma,\gamma') \le 4b + \frac{14b}{TV(\mu,\nu)}. 
\]
But by the symmetry of the problem, the same bound should hold if we replace $TV(\mu,\nu)$ by $TV(\mu',\nu')$ on the right. Thus,
\begin{align}\label{tvfirst}
TV(\gamma,\gamma')\le 4b + 14b \min\biggl\{\frac{1}{TV(\mu,\nu)}, \frac{1}{TV(\mu',\nu')}\biggr\}= 4b + \frac{14 b}{a}.
\end{align}
The above bound was derived under the assumption that $TV(\mu,\nu)$ and $TV(\mu',\nu')$ are nonzero. But it is not hard to check that all the steps go through even if one or both of them are zero. Next, note that for any $S$,
\begin{align*}
\biggl|\gamma(S) - \int_{\tilde{S}} h(x)d\alpha(x)\biggr| &\le \frac{1}{TV(\mu,\nu)}\int_{\mx\times \mx} f_1(x)g_1(y)d\alpha^{\otimes 2}(x,y)\\
&= \frac{1}{TV(\mu,\nu)} TV(\mu,\nu)^2 = TV(\mu,\nu).
\end{align*}
Note that the bound holds even if $TV(\mu,\nu)=0$. Similarly, 
\[
\biggl|\gamma'(S) - \int_{\tilde{S}} h'(x)d\alpha(x)\biggr| \le TV(\mu',\nu').
\]
Thus, by \eqref{tv1},
\begin{align*}
|\gamma(S)-\gamma'(S)| &\le \int_{\tilde{S}} |h(x)-h'(x)|d\alpha(x) + TV(\mu,\nu) + TV(\mu',\nu')\\
&\le 4b + 2a. 
\end{align*}
Since this holds for any $S$, we get
\begin{align}\label{tvsecond}
TV(\gamma,\gamma') \le 4b + 2a. 
\end{align}
The proof is now completed by combining \eqref{tvfirst} and \eqref{tvsecond}, as follows. If $a < \sqrt{7b}$, we use \eqref{tvsecond} to get
\[
TV(\gamma,\gamma') \le 4b + 2\sqrt{7b} \le 10\sqrt{b}.
\]
On the other hand, if $a\ge \sqrt{7b}$, we use \eqref{tvfirst} to get
\[
TV(\gamma, \gamma') \le 4b + \frac{14b}{\sqrt{7b}}\le 10\sqrt{b}.
\]
This completes the proof of the lemma.
\end{proof}

\section{Conditional probabilities}\label{condprobsec}
Let $\mx$ and $\my$ be two Polish spaces, equipped with their respective Borel $\sigma$-algebras. A function $\phi:\mx \times \mb(\my) \to [0,1]$ is called a {\it conditional probability} from $\mx$ to $\my$ if the following two conditions are satisfied:
\begin{enumerate}
\item For each $x\in \mx$, $\phi(x,\cdot)$ is a probability measure on $\my$.
\item For each $B\in \mb(\my)$, the map $x\mapsto \phi(x,B)$ is measurable. 
\end{enumerate}
If $\alpha$ is a $\sigma$-finite measure on $\my$ such that each $\phi(x,\cdot)$ is absolutely continuous with respect to $\alpha$ with Radon--Nikodym derivative $f(x,\cdot)$, and $f$ satisfies the condition that $(x,y)\mapsto f(x,y)$ is measurable, then we say that $f(x,\cdot)$ are {\it conditional probability densities} from $\mx$ to $\my$. We will need the following lemma about conditional probabilities. Although elementary, I could not find the exact statement in the literature. 
\begin{lmm}\label{condproblmm}
Let $\mx$, $\my$ and $\phi$ be as above. Then, given any probability measure $\mu$ on $\mx$, there is a unique  probability measure $\gamma$ on $\mx\times \my$ satisfying
\begin{align}\label{gammequation}
\gamma(A\times B) = \int_A \phi(x,B)d\mu(x)
\end{align}
for any $A\in \mb(\mx)$ and $B\in \mb(\my)$. Next, suppose that we have two probability measures $\mu$ and $\mu'$ and two conditional probabilities $\phi$ and $\phi'$ having conditional probability densities $f$ and $f'$ with respect to some probability measure $\alpha$. Suppose that $f$ and $f'$ are uniformly bounded by a constant $a$. Let $\gamma$ and $\gamma'$ be  as in \eqref{gammequation}. Then
\[
TV(\gamma,\gamma') \le a TV(\mu,\mu') + \sup_{x\in \mx, y\in \my} |f(x,y)-f'(x,y)|. 
\]
\end{lmm}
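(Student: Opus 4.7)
The plan is to construct $\gamma$ explicitly via the section formula and then bound the total variation by a triangle-inequality decomposition. For existence, I would define
\[
\gamma(E) := \int_\mx \phi(x, E_x)\, d\mu(x), \qquad E_x := \{y\in \my : (x,y)\in E\},
\]
for every $E\in \mb(\mx\times \my)$. The first substantive step is to show that $x\mapsto \phi(x, E_x)$ is measurable for every such $E$. I would do this by a Dynkin $\pi$--$\lambda$ argument: let $\mathcal{D}$ be the collection of $E\in \mb(\mx\times\my)$ for which $E_x\in \mb(\my)$ for every $x$ and $x\mapsto \phi(x, E_x)$ is measurable. Then $\mathcal{D}$ contains all measurable rectangles $A\times B$ (since $\phi(x, (A\times B)_x) = \1_A(x)\phi(x, B)$), is closed under complements (since $\phi(x, \my\setminus E_x) = 1 - \phi(x, E_x)$), and is closed under countable disjoint unions (by countable additivity of each $\phi(x, \cdot)$ combined with monotone convergence). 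Hence $\mathcal{D} = \mb(\mx\times\my)$. Countable additivity of $\gamma$ then follows from $\sigma$-additivity of each $\phi(x,\cdot)$ together with monotone convergence, and $\gamma(\mx\times\my) = 1$. The rectangle formula \eqref{gammequation} is immediate, and uniqueness is a further application of Dynkin's theorem, since rectangles form a $\pi$-system generating $\mb(\mx\times\my)$.

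For the total variation bound, I would fix $E\in \mb(\mx\times\my)$ and decompose
\[
\gamma(E) - \gamma'(E) = \int_\mx \bigl[\phi(x, E_x) - \phi'(x, E_x)\bigr]\, d\mu(x) + \int_\mx \phi'(x, E_x)\, d(\mu - \mu')(x).
\]
For the first integrand, $|\phi(x, E_x) - \phi'(x, E_x)| = \bigl|\int_{E_x}(f - f')\, d\alpha\bigr| \le \alpha(E_x)\sup_{x,y}|f - f'| \le \sup|f - f'|$, using $\alpha(\my) = 1$; integrating against the probability measure $\mu$ preserves the bound. For the second, write $g(x) := \phi'(x, E_x) \in [0,1]$ and apply the variational formula \eqref{tvalt} to $2g - 1\in [-1,1]$ to obtain $|\int g\, d(\mu - \mu')| \le TV(\mu, \mu')$. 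Adding these bounds yields $|\gamma(E) - \gamma'(E)|\le \sup|f - f'| + TV(\mu, \mu')$. Finally, since $f(x, \cdot)$ is a probability density with respect to the probability measure $\alpha$, any uniform bound $a$ on $f$ must satisfy $a\ge 1$; hence $TV(\mu, \mu') \le a\, TV(\mu, \mu')$, and the claimed inequality follows on taking the supremum over $E$.

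The main technical subtlety is the measurability of $x\mapsto \phi(x, E_x)$ for non-rectangle $E$; this is not immediately transparent, but the Dynkin machinery described above handles it cleanly. The rest of the proof is routine measure-theoretic bookkeeping, and in particular no use is needed of the explicit optimal coupling formula \eqref{gammadef} or of the perturbation estimate Lemma \ref{tvmainlmm} from the previous section.
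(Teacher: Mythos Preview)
Your proof is correct and follows essentially the same strategy as the paper: construct $\gamma$ by integrating the kernel against $\mu$, verify \eqref{gammequation}, invoke uniqueness on the $\pi$-system of rectangles, and then bound $|\gamma(E)-\gamma'(E)|$ by a two-term triangle inequality. The only substantive difference is the order of the decomposition. The paper writes
\[
|\gamma(S)-\gamma'(S)| \le \Bigl|\int g\,d\mu - \int g\,d\mu'\Bigr| + \int_{\mx}\int_{\my}|f-f'|\,d\alpha\,d\mu',\qquad g(x)=\int_{\my}1_S(x,y)f(x,y)\,d\alpha(y),
\]
so the intermediate function $g$ involves $f$ and is only bounded by $a$, giving $a\,TV(\mu,\mu')$ directly. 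You instead swap $\phi$ for $\phi'$ first and then $\mu$ for $\mu'$; your intermediate function $\phi'(x,E_x)$ is a genuine probability and hence lies in $[0,1]$, yielding the sharper bound $TV(\gamma,\gamma')\le TV(\mu,\mu')+\sup|f-f'|$, after which you recover the stated inequality from $a\ge 1$. Both routes are fine; yours is marginally more economical and in fact proves a slightly stronger inequality than the lemma claims.
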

\begin{proof}
The existence and uniqueness of $\gamma$ can be established by standard measure-theoretic methods, via Carath\'eodory's extension theorem~\cite[Theorem A.1.1.]{durrett10}. For the second part, we proceed as follows. Let $\beta :=\mu\times \alpha$. For each $S\in\mb( \mx\times \my)$, let
\[
\tilde{\gamma}(S) := \int_S f(x,y) d\beta(x,y). 
\]
It is easy to verify that $\tilde{\gamma}$ is a probability measure on $\mx\times \my$, and that it satisfies equation~\eqref{gammequation}. So by the uniqueness of $\gamma$, we get that $\tilde{\gamma}=\gamma$. Therefore by Fubini's theorem, we have
\begin{align}\label{gammas}
\gamma(S) = \tilde{\gamma}(S) = \int_{\mx} \int_{\my} 1_S(x,y) f(x,y) d\alpha(y)d\mu(x),
\end{align}
where $1_S$ denotes the indicator function for the set $S$. A similar formula holds for $\gamma'$. Thus, if we let
\[
g(x) := \int_{\my} 1_S(x,y) f(x,y) d\alpha(y),
\]
then $g$ is measurable, and 
\begin{align}
|\gamma(S)-\gamma'(S)| &\le  \biggl|\int_{\mx} g(x)d\mu(x) - \int_{\mx} g(x) d\mu'(x)\biggr|\notag \\
&\qquad + \int_{\mx} \int_{\my} |f(x,y)-f'(x,y)| d\alpha(y)d\mu'(x). \label{cond0}
\end{align}
By assumption, there is a constant $a$ such that $f(x,y)\le a$ for all $x$ and $y$. Since $\alpha$ is a probability measure, this implies that $0\le g(x)\le a$ for all $x$. Thus, by \eqref{tvalt},
\begin{align}
\biggl|\int_{\mx} g(x)d\mu(x) - \int_{\mx} g(x) d\mu'(x)\biggr| \le a TV(\mu,\mu').\label{cond1}
\end{align}
Combining \eqref{cond0} and \eqref{cond1}, we get the required bound.
\end{proof}

\section{Lattice gauge theory in a cube}\label{infsec}
For each $N\ge 1$, let $B_N$ be the  cube $\{-N, \ldots, N\}^d$. The goal of this section is to investigate some properties of our lattice gauge theory restricted to $B_N$. Let $E_N$ be the set of positively oriented edges of $B_N$. Let $\Omega_N$ be the set of all functions from $E_N$ into $G$. Let $\partial E_N$ denote the set of positively oriented  boundary edges of $B_N$. Let $\partial \Omega_N$ denote the set of all functions from $\partial E_N$ into $G$.  Let $E_N^\circ := E_N\setminus \partial E_N$ be the set of positively oriented {\it interior edges} of $B_N$. Let $\Omega_N^\circ$ be the set of all functions from $E_N^\circ$ into $G$. Thinking of $\Omega_N$ as the Cartesian product of $\Omega_N^\circ$ and $\partial \Omega_N$, we will write an element of $\Omega_N$ as a pair $(\omega, \delta)$, where $\omega\in \Omega_N^\circ$ and $\delta\in \partial\Omega_N$. 

Let $H_N:\Omega_N \to \rr$ denote the Hamiltonian of our lattice gauge theory restricted to the cube $B_N$, viewing it as a function of a configuration $\omega \in \Omega_N^\circ$ and a boundary condition $\delta\in \partial \Omega_N$. Given a measurable function $f:\Omega_N\to \rr$ and a boundary condition $\delta\in \partial \Omega_N$, let 
\begin{align}\label{expecdef2}
\smallavg{f} := \frac{\int_{\Omega_{N}^\circ}f(\omega,\delta) e^{-\beta H_N(\omega,\delta)} d\lambda_N(\omega) }{\int_{\Omega_{N}^\circ}e^{-\beta H_N(\omega,\delta)} d\lambda_N(\omega)},
\end{align}
where $\lambda_N$ is the normalized product Haar measure on $\Omega_N^\circ$, provided that the numerator is well-defined. Note that this is a function of $\delta$.


Take any edge $e\in \partial E_N$, and consider the Hamiltonian $H_N$ as a function of only $\delta_e$, fixing the matrices assigned to all other edges. Let $\nabla_eH_N$ denote the gradient of this function (identifying $M_n(\cc)$ with $\rr^{2n^2}$, as before). Then it is easy to see that each component of $\nabla_e H_N$ is a local function supported on $e$. We will abbreviate this by simply saying that $\nabla_e H_N$ is a local function supported on $e$. Moreover, observe that $\|\nabla_e H\|$ can be bounded by a constant that depends only on $G$ and $d$.


Let $f:\Omega_N\to\rr$ be a bounded measurable function. Let $e\in \partial E_N$ be an edge such that $f$ has no dependence on $\delta_e$. Then using the dominated convergence theorem for Lebesgue integrals, it is not hard to see that the gradient can be moved inside the integrals in the following calculation, giving
\begin{align}
\nabla_e \smallavg{f} &= \frac{\int_{\Omega_{N}^\circ}f(\omega,\delta ) \nabla_e(e^{-\beta H_N(\omega,\delta)}) d\lambda_N(\omega) }{\int_{\Omega_{N}^\circ}e^{-\beta H_N(\omega,\delta)} d\lambda_N(\omega)} \notag\\
&\qquad - \frac{\int_{\Omega_{N}^\circ }f(\omega,\delta) e^{-\beta H_N(\omega,\delta)} d\lambda_N(\omega) \int_{\Omega_{N}^\circ}\nabla_e(e^{-\beta H_N(\omega,\delta)}) d\lambda_N(\omega)}{(\int_{\Omega_{N}^\circ}e^{-\beta H_N(\omega,\delta)} d\lambda_N(\omega))^2}\notag \\
&= -\beta \smallavg{f\nabla_e H_N} +\beta \smallavg{f}\smallavg{\nabla_eH_N}, \label{covform}
\end{align}
where the last identity holds because 
\[
\nabla_e (e^{-\beta H_N(\omega,\delta)}) = -\beta e^{-\beta H_N(\omega,\delta)}\nabla_e H_N(\omega,\delta).
\]
Note that $f \nabla_e H_N$ and $\nabla_e H_N$ are matrix-valued functions, so the expectations written above are to be interpreted as matrices of expected values.



In the statement of the following lemma and in all that follows, we will use $C, C_1,C_2,\ldots$ to denote positive constants that may depend on $G$, $\beta$, $d$ and $\pi$, and nothing else. The values of these constants may change from line to line.
\begin{lmm}\label{bdrylmm}
Let $f:\Omega_N \to[-1,1]$ be a local function supported on an edge $e\in \partial E_N$. Take any $r\ge 1$. Then there is a function $g:\partial \Omega_N \to \rr$ that depends only on $\{\delta_u: u\in \partial E_N, \, \dist(e,u)\le r\}$, such that for any boundary condition $\delta$, $|\smallavg{f} - g(\delta)|\le C_1 N^{d-1}e^{-C_2r}$. 
\end{lmm}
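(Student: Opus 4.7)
The plan is to show that $\langle f \rangle$, viewed as a function of $\delta$, depends only weakly on the values $\delta_u$ for $u$ far from $e$, by combining the covariance identity \eqref{covform}, the exponential decay of correlations from Definition \ref{expdecaydef}, and the path-length bound from Corollary \ref{liecor}.

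First I would fix any boundary edge $u \in \partial E_N$ which is not a neighbor of $e$. Since $f$ then has no dependence on $\delta_u$, identity \eqref{covform} (applied coordinatewise in the identification $M_n(\cc) \cong \rr^{2n^2}$) gives $\nabla_u \smallavg{f} = -\beta\, \Cov(f, \nabla_u H_N)$. The matrix $\nabla_u H_N$ is, entry by entry, a local function supported on $u$, and because $H_N$ is a sum of plaquette terms $-\Re(\tr(\omega_p))$ over the boundedly many plaquettes containing $u$ (with entries of unitary matrices having modulus at most $1$), each such entry is uniformly bounded by a constant $M$ depending only on $G$ and $d$. After rescaling each entry into $[-1,1]$ and applying Definition \ref{expdecaydef} under the lattice gauge theory measure on $B_N$ with boundary condition $\delta$, I obtain the entrywise covariance bound and hence $\|\nabla_u \smallavg{f}\| \le C\, e^{-C_2 \dist(e,u)}$, uniformly in $\delta$.

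Next, Corollary \ref{liecor}, applied to the smooth function $\delta_u \mapsto \smallavg{f}(\delta)$ on $G$ (with all other components of $\delta$ held fixed), yields that replacing $\delta_u$ by any other element of $G$ changes $\smallavg{f}$ by at most $KC\, e^{-C_2 \dist(e,u)}$. Now define $g(\delta) := \smallavg{f}(\tilde\delta)$, where $\tilde\delta_u := \delta_u$ if $\dist(e,u) \le r$ and $\tilde\delta_u := I$ otherwise. Since $r \ge 1$ and the midpoints of edges sharing a plaquette with $e$ are within distance $1$ of the midpoint of $e$, the close set $\{u : \dist(e,u) \le r\}$ automatically contains every neighbor of $e$, so the covariance-formula hypothesis applies to every far edge. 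Interpolating from $\tilde\delta$ to $\delta$ by switching the far edges one at a time, each step costs at most $KC\, e^{-C_2 \dist(e,u)} \le KC\, e^{-C_2 r}$, and there are at most $|\partial E_N| \le C\, N^{d-1}$ far edges, so the total change is bounded by $C_1 N^{d-1} e^{-C_2 r}$.

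The main obstacle is the first step: one has to verify that Definition \ref{expdecaydef}, stated only for $[-1,1]$-valued local functions, legitimately applies to the matrix-valued $\nabla_u H_N$, which requires the bookkeeping just described to reduce to a finite number of scalar local functions each supported on $u$ with a uniform bound. Once that is in place, the remainder is a telescoping interpolation combined with a crude count of boundary edges, and the factor $N^{d-1}$ in the bound simply reflects the absence of any attempt to exploit that most far edges are much further than $r$ from $e$.
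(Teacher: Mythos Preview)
Your proposal is correct and follows essentially the same approach as the paper: use the covariance identity \eqref{covform} together with the exponential decay assumption to bound $\|\nabla_u\smallavg{f}\|$ for each far boundary edge $u$, invoke Corollary \ref{liecor} to convert this into a bound on the change in $\smallavg{f}$ when $\delta_u$ is replaced by $I$, and then telescope over all far boundary edges to define $g$. Your explicit remark that the gradient bound holds uniformly in $\delta$ is exactly what makes the telescoping valid, and your bookkeeping about rescaling the entries of $\nabla_u H_N$ into $[-1,1]$ is the right way to reduce to Definition \ref{expdecaydef}; the paper handles this point more tersely but in the same spirit.
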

\begin{proof}
Take any $u\in \partial E_N$ such that $\dist(e,u)>r$. Then $u$ is not adjacent to $e$, meaning that they do not belong to a common plaquette. Since $r\ge 1$ and $f$ is a  local function supported on $e$, this shows that $f$ has no dependence of $\delta_u$. So, by the formula \eqref{covform} and the fact that $f$ and $\nabla_u H_N$ are local functions whose magnitudes are bounded by constants that depend only on $G$ and $d$, and whose supporting edges are separated by a distance greater than $r$, the correlation decay assumption implies that
\begin{align*}
\|\nabla_u\smallavg{f}\| &= |\beta| \|\smallavg{f\nabla_u H_N} - \smallavg{f}\smallavg{\nabla_uH_N}\|\le C_1 e^{-C_2r}.
\end{align*}
Note that this bound holds irrespective of the boundary condition. Therefore, Corollary \ref{liecor} implies that if we replace $\delta_u$ by $I$, the value of $\smallavg{f}$ changes by at most $C_1e^{-C_2r}$. We can perform this operation successively to replace $\delta_u$ by $I$ for each $u\in \partial  E_N$ such that $\dist(e,u)>r$. The total change in $\smallavg{f}$ will be bounded by $C_1 N^{d-1} e^{-C_2 r}$. The new value of $\smallavg{f}$ is a function of $\{\delta_u: u\in \partial E_N,\, \dist(e,u)\le r\}$. Let this function be denoted by $g$. Clearly, this $g$ has the desired property.
\end{proof}

Take any $e\in \partial E_N$ and any $1\le r \le N/4$. Let $x = (x_1,\ldots,x_d)$ and $y = (y_1,\ldots,y_d)$ be the endpoints of $e$, in lexicographic order. For each $i$, let
\begin{align*}
a_i :=
\begin{cases}
x_i - 2r &\text{ if } -N+2r\le x_i\le N-2r,\\
-N &\text{ if } x_i < -N+2r,\\
N-4r &\text{ if } x_i > N-2r.
\end{cases}
\end{align*}
Let $b_i := a_i + 4r$. Let 
\[
B(e,r) := ([a_1,b_1]\times \cdots \times [a_d,b_d]) \cap \zz^d.
\]
Clearly, $B(e,r)$ is a cube. Let $E(e,r)$ denote the set of positively oriented edges of $B(e,r)$ and let $\partial E(e,r)$ denote the positively oriented boundary edges of $B(e,r)$. We will refer to $B(e,r)$ as the {\it $r$-neighborhood} of the edge  $e$ (see Figure \ref{rnbhpic}). The following lemma gathers some basic facts about the structure of $B(e,r)$. 

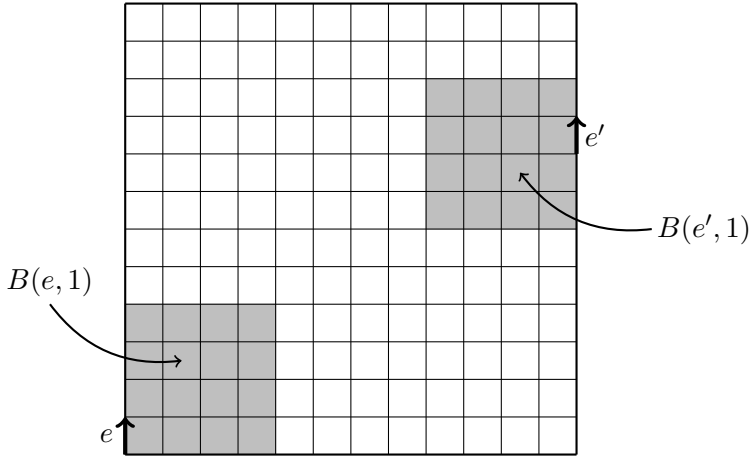
\begin{figure}[t!]
\begin{center}
\begin{tikzpicture}[scale = 1]
\draw[lightgray, fill] (0,0) rectangle (2,2);
\draw[lightgray, fill] (4,3) rectangle (6,5);
\draw[thick] (0,0) to (6,0);
\draw[thick] (6,0) to (6,6);
\draw[thick] (6,6) to (0,6);
\draw[thick] (0,6) to (0,0);
\draw (.5,0) to (.5,6);
\draw (1,0) to (1,6);
\draw (1.5,0) to (1.5,6);
\draw (2,0) to (2,6);
\draw (2.5,0) to (2.5,6);
\draw (3,0) to (3,6);
\draw (3.5,0) to (3.5,6);
\draw (4,0) to (4,6);
\draw (4.5,0) to (4.5,6);
\draw (5,0) to (5,6);
\draw (5.5,0) to (5.5,6);
\draw (0,.5) to (6,.5);
\draw (0,1) to (6,1);
\draw (0,1.5) to (6,1.5);
\draw (0,2) to (6,2);
\draw (0,2.5) to (6,2.5);
\draw (0,3) to (6,3);
\draw (0,3.5) to (6,3.5);
\draw (0,4) to (6,4);
\draw (0,4.5) to (6,4.5);
\draw (0,5) to (6,5);
\draw (0,5.5) to (6,5.5);
\node at (-.25,.25) {$e$};
\draw[ultra thick, ->] (0,0) to (0,.5);
\node at (6.25,4.25) {$e'$};
\draw[ultra thick, ->] (6,4) to (6,4.5);
\draw[thick, ->] (-1,2) to[bend right = 30] (.75,1.25);
\node at (-1,2.3) {$B(e,1)$};
\draw[thick, ->] (7,3) to[bend left = 30] (5.25,3.75);
\node at (7.7,3) {$B(e',1)$};
\end{tikzpicture}
\caption{The $1$-neighborhoods of two boundary edges $e$ and $e'$.\label{rnbhpic}}
\end{center}
\end{figure}

\begin{lmm}\label{neighborlmm}
The set $B(e,r)$ defined above is a cube of side-length $4r$, and is a subset of $B_N$. The edge $e$ is a boundary edge of $B(e,r)$. Any edge of $B_N$ that is adjacent to $e$ is also an edge of $B(e,r)$. Lastly, any $u\in \partial E(e,r)\setminus \partial E_N$ must satisfy  $\dist(e,u)> r$.
\end{lmm}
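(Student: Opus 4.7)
The plan is to unpack the piecewise definition of $B(e,r)$ case-by-case. For Claim~1, observe that $b_i - a_i = 4r$ in each of the three cases and all quantities are integers, so $B(e,r) = \prod_i \{a_i, a_i+1, \ldots, a_i+4r\}$ is a geometric cube of side-length $4r$. For Claim~2, one checks $-N \le a_i$ and $b_i \le N$ separately in each case: Case~1 uses its hypothesis $-N+2r \le x_i \le N-2r$ directly, while Cases~2 and 3 use the assumption $r \le N/4$ to yield $b_i = -N+4r \le N$ and $a_i = N-4r \ge -N$, respectively.

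For Claim~3, I would first check by a coordinate-wise argument (using only $r \ge 1$) that both endpoints of $e$ belong to $B(e,r)$. Writing $e = (x,y)$ with $y_k = x_k+1$ and $y_i = x_i$ for $i \ne k$, I'd next observe that since $e \in \partial E_N$, some index $j \ne k$ must satisfy $x_j = y_j \in \{-N, N\}$: the direction $k$ itself cannot supply this boundary coordinate because $x_k < y_k$ rules out $x_k = y_k = \pm N$, and a short case analysis on the remaining scenarios forces the boundary coordinate to live in some $j \ne k$. If $x_j = N$, then $x_j > N-2r$, so Case~3 applies in coordinate $j$ and gives $b_j = N$; both endpoints of $e$ then lie on the face $\{\cdot_j = b_j\}$ of $B(e,r)$. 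The case $x_j = -N$ is symmetric via Case~2. For Claim~4, any edge $u$ sharing a plaquette with $e$ has every endpoint $z$ satisfying $\|z - x\|_\infty \le 1$. The inclusion $z \in [a_i, b_i]$ is then immediate in Case~1 (as $2r \ge 2$), and in Cases~2 and 3 it follows by combining $|z_i - x_i| \le 1$ with $z \in B_N$ (true since $u$ is an edge of $B_N$) to exclude the side of the interval lying outside $B_N$.

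For Claim~5, let $u$ have endpoints $z$ and $z'$, with $z'_{k'} = z_{k'} + 1$ and $z'_i = z_i$ otherwise. Since $u$ is a boundary edge of $B(e,r)$ and since $4r > 1$ prevents both $z_{k'}$ and $z'_{k'}$ from simultaneously hitting $\{a_{k'}, b_{k'}\}$, a short check shows there must exist $j \ne k'$ with $c := z_j = z'_j \in \{a_j, b_j\}$; thus $u$ lies on the face $\{\cdot_j = c\}$ of $B(e,r)$. Since $u \notin \partial E_N$, this face is not a face of $B_N$, i.e., $c \notin \{-N, N\}$. A quick inspection of the three cases defining $a_j, b_j$ shows that whenever $c = a_j \ne -N$ or $c = b_j \ne N$, one has $|c - x_j| \ge 2r$. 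The midpoint of $u$ has $j$-th coordinate exactly $c$, while the midpoint of $e$ has $j$-th coordinate either $x_j$ (if $j \ne k$) or $x_j + \tfrac12$ (if $j = k$). Hence the Euclidean distance between the midpoints is at least $2r - \tfrac12 > r$, since $r \ge 1$ forces $r > \tfrac12$.

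I do not expect a genuine obstacle here: the lemma is essentially a bookkeeping exercise. The only delicate point is in Claim~5, where one must track the half-unit midpoint offset in the direction of $e$ in order to secure the strict inequality $\dist(e,u) > r$ rather than $\ge r$; but this reduces to the trivial fact that $r \ge 1$.
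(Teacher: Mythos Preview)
Your proof is correct and follows essentially the same coordinate-wise case analysis as the paper. The only organizational differences are minor: for the claim that $e\in\partial E(e,r)$, the paper observes more slickly that any point of $\partial B_N$ lying in $B\subseteq B_N$ is automatically in $\partial B$, and for the adjacency claim the paper instead proves the stronger statement that every $u\in E_N\setminus E(e,r)$ satisfies $\dist(e,u)>r$ and deduces adjacency as a corollary.
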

\begin{proof}
For simplicity, let us write $B$ and $E$ instead of $B(e,r)$ and $E(e,r)$. By construction, $a_i$ and $b_i$ are in $[-N,N]$ for each $i$. Therefore, $B\subseteq B_N$. Also by construction, $b_i-a_i=4r$ for each $i$. Thus, $B$ is a cube of side-length $4r$. 

By construction, $x_i\in [a_i,b_i]$ for each $i$. Thus, $x\in B$. Since $x$ and $y$ are neighbors and $y$ is lexicographically bigger than $x$, there is exactly one coordinate $i$ such that $y_i=x_i+1$, and $y_j=x_j$ for all $j\ne i$. Since $y\in B_N$, it must be that $x_i <N$, and hence $y_i\in [a_i, b_i]$. For $j\ne i$, $y_j=x_j\in [a_j, b_j]$. So we conclude that $y\in B$. Since $x$ and $y$ are both in $B$, we get that $e\in E$. But $B$ is a cube contained in $B_N$ and $e\in \partial E_N$. Therefore $e$ must be a boundary edge of $B$. Thus, $e\in \partial E$.

Take any $u\in E_N\setminus E$.  Let $w =(w_1,\ldots,w_d)$ and $z= (z_1,\ldots,z_d)$ be the endpoints of $u$, in lexicographic order. Since $u\notin E$, at least one of $w$ and $z$ is not in $B$. Suppose that $w\notin B$. Then there is a coordinate $j$ such that $w_j\notin [a_j,b_j]$. That is, either $w_j<a_j$ or $w_j >b_j$. Since $w\in B_N$, the definitions of $a_j$ and $b_j$ now show that $|w_j-x_j| > 2r$. Since $|x_j-y_j|\le 1$ and $|w_j-z_j|\le 1$, and $r\ge1$, this gives
\begin{align*}
\dist(e,u)^2 &\ge \biggl(\frac{w_j+z_j}{2} - \frac{x_j+y_j}{2}\biggr)^2>  (2r-1)^2 \ge r^2.
\end{align*}
A similar argument shows that if $z\notin B$, then $\dist(e,u)> r$. Thus, we conclude that if $u\in E_N\setminus E$, then $\dist(e,u) > r\ge 1$. In particular, $u$ is not adjacent to $e$. 

Finally, take any $u\in \partial E\setminus \partial E_N$.  Let $w =(w_1,\ldots,w_d)$ and $z= (z_1,\ldots,z_d)$ be the endpoints of $u$, in lexicographic order. Since $u\in \partial E$, there must exist $j$ such that either $w_j=z_j=a_j$ or $w_j=z_j=b_j$. Moreover, since $u\notin \partial E_N$, there must exist $j$ with this property and also satisfying $-N<w_j<N$. Take any such $j$. Then it is easy to see that $|w_j-x_j|\ge 2r$. Since $|y_j-x_j|\le 1$ and $r\ge1$, 
\begin{align*}
\dist(e,u)^2 &\ge \biggl(\frac{w_j+z_j}{2} - \frac{x_j+y_j}{2}\biggr)^2\\
&=  \biggl(w_j - \frac{x_j+y_j}{2}\biggr)^2\ge \biggl(2r-\frac{1}{2}\biggr)^2 > r^2.
\end{align*}
This completes the proof of the lemma.
\end{proof}


Take any $e\in \partial E_N$ and any $1\le r\le N/4$. Let $B(e,r)$, $E(e,r)$ and $\partial E(e,r)$ be defined as above. The following lemma gives an upper bound on the correlation between a local function supported on $e$ and a function that depends only on edges outside the $r$-neighborhood of $e$. 
\begin{lmm}\label{influencelmm}
Let $e$ and $r$ be as above. Let $f:\Omega_N^\circ \to [-1,1]$  be a measurable function that depends only on $\{\omega_u: u\in E_N^\circ \setminus E(e,r)\}$. Let $h:\Omega_N\to[-1,1]$ be a local function supported on $e$. Then under any boundary condition on $B_N$, 
\[
|\smallavg{fh} - \smallavg{f}\smallavg{h}|\le C_1  r^{d-1} e^{-C_2r}.
\]
\end{lmm}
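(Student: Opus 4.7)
The plan is to exploit the Markov property of the Gibbs measure on $B_N$ together with Lemma \ref{bdrylmm} applied inside the smaller cube $B(e,r)$. The set $\partial E(e,r)$ should act as a separating cut: every plaquette in $B_N$ either lies entirely in $E(e,r)$ or has no edge in $E(e,r)\setminus \partial E(e,r)$, so conditioning on $\omega|_{\partial E(e,r)}$ decouples the interior of $B(e,r)$ from its complement.

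More precisely, the first step is to note that by Lemma \ref{neighborlmm} every edge of $E_N$ adjacent to $e$ lies in $E(e,r)$, so the local function $h$ depends only on $\omega_u$ for $u\in E(e,r)$, while by hypothesis $f$ depends only on $\omega_u$ for $u\in E_N^\circ\setminus E(e,r)$. Letting $\smallavg{\cdot}'$ denote conditional expectation given $\omega|_{\partial E(e,r)}$ (together with the external boundary $\delta$ on $\partial E_N$), the Markov property yields conditional independence of $f$ and $h$, so
\begin{align*}
\smallavg{fh} = \smallavg{\smallavg{f}'\smallavg{h}'}.
\end{align*}
The second step is to apply Lemma \ref{bdrylmm} to the induced lattice gauge theory on $B(e,r)$: since $e\in \partial E(e,r)$ and $B(e,r)$ has side length $4r$, the lemma (with its $N$ replaced by $2r$) produces a function $g$ depending only on $\{\omega_u : u\in \partial E(e,r),\ \dist(e,u)\le r\}$ with $|\smallavg{h}' - g|\le C_1(2r)^{d-1}e^{-C_2 r}\le C r^{d-1}e^{-C_2 r}$. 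The third step is the crucial observation: by the final assertion of Lemma \ref{neighborlmm}, if $u\in \partial E(e,r)$ and $\dist(e,u)\le r$ then $u\in \partial E_N$. Thus $g$ depends only on the fixed external boundary condition $\delta$, and is therefore a constant. Taking expectations and using $|\smallavg{f}'|\le 1$ gives both $|\smallavg{h}-g|\le Cr^{d-1}e^{-C_2 r}$ and $|\smallavg{fh} - g\smallavg{f}|\le Cr^{d-1}e^{-C_2 r}$, and the triangle inequality finishes the argument.

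The main subtle point is verifying the Markov-type decoupling cleanly: one must check that no plaquette in $B_N$ has edges simultaneously in $E(e,r)\setminus \partial E(e,r)$ and in $E_N\setminus E(e,r)$, so that the Boltzmann weight factors once $\omega|_{\partial E(e,r)}$ is fixed. This is a geometric statement about unit plaquettes and the cubical structure of $B(e,r)$ guaranteed by Lemma \ref{neighborlmm}; once established, the remainder of the argument is a straightforward application of the earlier lemma combined with conditional expectation identities.
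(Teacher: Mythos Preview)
Your proposal is correct and follows essentially the same approach as the paper: condition on the boundary of $B(e,r)$, apply Lemma \ref{bdrylmm} inside the sub-cube to approximate $\smallavg{h}'$ by a function $g$ of nearby boundary edges, invoke Lemma \ref{neighborlmm} to see that those nearby boundary edges all lie in $\partial E_N$ so that $g$ is a deterministic constant, and finish with the tower property and the triangle inequality. The only cosmetic difference is that the paper conditions on all of $\{\omega_u : u \in E_N^\circ \setminus E^\circ\}$ rather than just $\partial E(e,r)$, which makes $f$ measurable with respect to the conditioning and lets one write $\smallavg{fh} = \smallavg{f\,\smallavg{h}'}$ directly, bypassing the explicit conditional-independence verification you flag as the ``subtle point''; but this is the same argument packaged slightly differently.
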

\begin{proof}
As before, let us write $B$ and $E$ instead of $B(e,r)$ and $E(e,r)$. Fix some boundary condition $\delta$ and let $\mu$ be the probability measure defined by our lattice gauge theory in $B_N$ with this boundary condition. Let $\mu'$ denote the conditional probability measure given $\{\omega_u: u\in E_N^\circ \setminus E^\circ\}$, where $E^\circ$ is the set of positively oriented interior edges of $B$. Since a lattice gauge theory is a Markov random field, it is clear that $\mu'$ is again a lattice gauge theory on the cube $B$, with boundary condition $\delta'$ on $\partial E$, where 
\[
\delta'_u = 
\begin{cases}
\omega_u &\text{ if } u\in \partial E \setminus \partial E_N,\\
\delta_u &\text{ if } u\in \partial E \cap \partial E_N.
\end{cases}
\] 
Let $\smallavg{h}'$ denote the expected value of $h$ with respect to $\mu'$.  By Lemma \ref{neighborlmm}, any edge of $B_N$ that is adjacent to $e$ is also an edge of $B$. Therefore by Lemma~\ref{bdrylmm} applied to the cube $B$, there is a function $g(\delta')$ of the boundary condition $\delta'$, which depends only on $\{\delta'_u: u\in \partial E, \  \dist(e,u)\le r\}$, such that  
\[
|\smallavg{h}' - g(\delta')| \le C_1 r^{d-1} e^{-C_2r}.
\]
But for any $u\in \partial E \setminus \partial E_N$, Lemma \ref{neighborlmm} tells us that $\dist(e,u) >r$.  Thus, $g(\delta')$ depends only on  a subset of $\{\delta_u': u\in \partial E\cap \partial E_N\}$. In particular, $g(\delta')$  is simply a function of the original boundary condition $\delta$ and has no dependence on $\{\omega_u: u\in \partial E\setminus \partial E_N\}$. So we can write $g(\delta)$ instead of $g(\delta')$. 

Now note that by the tower property of conditional expectation and the fact that $f$ has no dependence on $\{\omega_u: u\in E_N^\circ\cap E\}$, we get  $\smallavg{fh} = \smallavg{f \smallavg{h}'}$. Since $f$ maps into $[-1,1]$, this gives 
\begin{align*}
|\smallavg{fh} - \smallavg{f}g(\delta)| &= |\smallavg{f (\smallavg{h}' - g(\delta))}|\\
&\le \smallavg{|\smallavg{h}' - g(\delta)|}\le C_1 r^{d-1}e^{-C_2 r}. 
\end{align*}
Similarly, since $\smallavg{h} = \smallavg{\smallavg{h}'}$, 
\begin{align*}
|\smallavg{f}\smallavg{h} - \smallavg{f}g(\delta)| &\le  |\smallavg{\smallavg{h}' - g(\delta)}|\\
&\le \smallavg{|\smallavg{h}' - g(\delta)|}\le C_1 r^{d-1}e^{-C_2 r}.
\end{align*}
The claim now follows by combining the above inequalities.
\end{proof}
Combining the above lemma with Corollary \ref{liecor} and equation \eqref{covform}, we obtain the following upper bound. It says that the expected value of a function that depends only on edges outside the $r$-neighborhood of $e$ cannot change by much if $\delta_e$ is replaced by a new value. 
\begin{cor}\label{infcor}
Let $f$ and $e$ be as in Lemma \ref{influencelmm}. Then 
\[
\|\nabla_e \smallavg{f}\|\le C_1 r^{d-1} e^{-C_2r}.
\]
Consequently, if $\delta_e$ is replaced by any other value $\delta_e'$, the value of $\smallavg{f}$ changes by at most $C_1  r^{d-1} e^{-C_2r}$.
\end{cor}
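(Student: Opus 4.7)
The plan is to combine formula \eqref{covform} with Lemma \ref{influencelmm} to bound the matrix gradient $\nabla_e \smallavg{f}$, and then invoke Corollary \ref{liecor} to convert this gradient bound into the claimed bound on finite differences. Since $f$ depends only on edges outside $E(e,r)$, it has no dependence on $\delta_e$, so formula \eqref{covform} applies and gives
\[
\nabla_e \smallavg{f} = -\beta\bigl(\smallavg{f\,\nabla_e H_N} - \smallavg{f}\smallavg{\nabla_e H_N}\bigr).
\]

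First I would handle each scalar entry of the matrix $\nabla_e H_N$ separately. Each such entry is a local function supported on $e$, and is bounded in absolute value by a constant $M$ depending only on $G$ and $d$. After dividing by $M$ so that the entry takes values in $[-1,1]$, Lemma \ref{influencelmm} applies to the resulting truncated correlation: its hypotheses are satisfied because by Lemma \ref{neighborlmm} every edge of $B_N$ adjacent to $e$ lies inside $E(e,r)$, while $f$ depends only on edges outside $E(e,r)$. Lemma \ref{influencelmm} therefore gives a bound of order $r^{d-1} e^{-C_2 r}$ on each entry of the truncated correlation. Summing the squared entries and taking a square root yields the asserted bound on $\|\nabla_e \smallavg{f}\|$ once the factor $M$, the factor $\beta$, and the number of entries $2n^2$ are absorbed into the constants $C_1, C_2$.

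For the second statement, fix all other boundary values and view $\smallavg{f}$ as a function of $\delta_e$ alone. The explicit integral representation \eqref{expecdef2} extends to a smooth function on a neighborhood of $G$ in $M_n(\cc)$, since the integrand depends polynomially on the entries of $\delta_e$ and the denominator is a strictly positive continuous function of $\delta_e$. The gradient estimate just proved holds uniformly over all boundary values, so taking the supremum over $\delta_e \in G$ bounds $\|\nabla \smallavg{f}\|_G$ by $C_1 r^{d-1} e^{-C_2 r}$. Corollary \ref{liecor} then produces the same bound, up to the universal constant $K$, on $|\smallavg{f}(\delta_e) - \smallavg{f}(\delta_e')|$ for any $\delta_e,\delta_e' \in G$.

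I do not expect any serious obstacle: the content of the corollary is simply that the correlation decay of Lemma \ref{influencelmm} transfers from truncated correlations to gradients via \eqref{covform}, and then from gradients to finite differences via the Lie-group path bound of Corollary \ref{liecor}. The only mild bookkeeping is the passage from the scalar bound on each entry of the gradient matrix to the Hilbert--Schmidt norm bound on the whole matrix, which introduces only the finite factor $\sqrt{2n^2}$ that gets absorbed into the constants.
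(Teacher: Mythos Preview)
Your proof is correct and follows essentially the same approach as the paper: apply \eqref{covform} together with Lemma \ref{influencelmm} (entrywise on $\nabla_e H_N$) to get the gradient bound, then invoke Corollary \ref{liecor} to pass to finite differences. You supply more detail than the paper (the entrywise reduction, the smooth extension of $\smallavg{f}$ in $\delta_e$), but the argument is the same; the reference to Lemma \ref{neighborlmm} is unnecessary since the hypotheses of Lemma \ref{influencelmm} are already directly satisfied, but it does no harm.
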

\begin{proof}
Recall that by \eqref{covform}, $\nabla_e\smallavg{f} = -\beta \smallavg{f \nabla_e H_N} +\beta \smallavg{f}\smallavg{\nabla_e H_N}$. Also, recall from the discussion in Section \ref{infsec} that $\nabla_e H_N$ is a local function supported on $e$, whose norm is bounded by a constant that depends only on $G$ and $d$. These two facts, combined with Lemma \ref{influencelmm}, prove the first claim. For the second, apply the first claim and Corollary \ref{liecor}.
\end{proof}
We will now extend Corollary \ref{infcor} to collections of boundary edges. Let $A$ be any nonempty subset of $\partial E_N$. Define $E(A,r)$ to be the union of $E(e,r)$ over all $e\in A$. 
\begin{lmm}\label{rnbhlmm}
Let $E(A,r)$ be as above. Take any measurable function $f:\Omega_N^\circ \to [-1,1]$ that depends only on $\{\omega_u: u\in E_N^\circ\setminus E(A,r)\}$. Then for any two boundary conditions $\delta$ and $\delta'$ that agree on the complement of $A$, the values of $\smallavg{f}$ differ by at most $C_1|A|r^{d-1}e^{-C_2r}$. 
\end{lmm}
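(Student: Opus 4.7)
The plan is to reduce this multi-edge statement to the single-edge bound already furnished by Corollary \ref{infcor} via a telescoping argument along the edges of $A$. Enumerate the elements of $A$ as $e_1,\ldots,e_{|A|}$, and build a chain of boundary conditions $\delta = \delta^{(0)}, \delta^{(1)}, \ldots, \delta^{(|A|)} = \delta'$, where $\delta^{(i)}$ is obtained from $\delta^{(i-1)}$ by replacing the value at $e_i$ with $\delta'_{e_i}$ while leaving all other edges unchanged. Since $\delta$ and $\delta'$ agree off $A$, this interpolation is well-defined and indeed terminates at $\delta'$.

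Now I would apply Corollary \ref{infcor} at the $i$-th step. The hypothesis of that corollary is that $f$ depends only on $\{\omega_u : u \in E_N^\circ \setminus E(e_i, r)\}$. Since $e_i \in A$ implies $E(e_i, r) \subseteq E(A, r)$, and $f$ depends only on edges in $E_N^\circ \setminus E(A,r)$ by assumption, this hypothesis holds a fortiori. Therefore the expectation $\smallavg{f}$ computed under boundary condition $\delta^{(i-1)}$ and that computed under $\delta^{(i)}$ differ by at most $C_1 r^{d-1} e^{-C_2 r}$.

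Summing these single-edge bounds via the triangle inequality over $i=1,\ldots,|A|$ telescopes to give $|\smallavg{f}_{\delta} - \smallavg{f}_{\delta'}| \le C_1 |A| r^{d-1} e^{-C_2 r}$, which is the claimed estimate. No serious obstacle arises; the only point that requires care is verifying that the single-edge hypothesis of Corollary \ref{infcor} remains valid at each step of the interpolation, which follows immediately from the monotonicity $E(e_i,r) \subseteq E(A,r)$ of $r$-neighborhoods under unions.
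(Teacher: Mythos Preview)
Your proposal is correct and follows essentially the same approach as the paper's own proof: change the boundary condition one edge of $A$ at a time, invoke Corollary \ref{infcor} at each step, and sum the $|A|$ single-edge bounds. Your added observation that $E(e_i,r)\subseteq E(A,r)$ so the hypothesis of Corollary \ref{infcor} is inherited at every intermediate step is exactly the point the paper leaves implicit.
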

\begin{proof}
By Corollary \ref{infcor}, $\smallavg{f}$ changes by at most $C_1r^{d-1}e^{-C_2 r}$ when the value of $\delta_e$ is changed for a single $e\in A$. Therefore the cumulative change when $\delta_e$ is changed for all $e\in A$ is at most $C_1|A|r^{d-1}e^{-C_2 r}$.
\end{proof}


Take any nonempty set  $A\subseteq \partial E_N$. Let $\Omega_{A,r}$ be the set of all functions from $E_N^\circ \setminus E(A,r)$  into $G$, and let $\Omega_{A,r}^*$ be the set of all functions from  $E_N^\circ \cap E(A,r)$ into $G$. Note that $\Omega_N^\circ$ can be viewed as the Cartesian product of $\Omega_{A,r}$ and $\Omega_{A,r}^*$. The following corollary of Lemma \ref{rnbhlmm} tells us that if $\mu$ is the probability measure defined by our lattice gauge theory in $B_N$, then the  marginal probability of $\mu$ on $\Omega_{A,r}$ has a  very small dependence on $\{\delta_e:e\in A\}$. This is quantified by a bound in total variation distance. 
\begin{cor}\label{rnbhcor}
Let $\delta$ and $\delta'$ be two boundary conditions on $B_N$, defining two probability measures $\mu$ and $\mu'$ according to our lattice gauge theory. Let $A$ be the set of all $e\in \partial E_N$ such that $\delta_e\ne \delta_e'$.  Viewing $\Omega_N^\circ$ as $\Omega_{A,r}\times \Omega_{A,r}^*$ (defined above), let $\mu_r$ and $\mu_r'$ be the marginal probabilities of $\mu$ and $\mu'$ on $\Omega_{A,r}$. Then 
\[
TV(\mu_r, \mu_r')\le C_1 |A| r^{d-1}e^{-C_2r}.
\]
\end{cor}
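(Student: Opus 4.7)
\smallskip

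The plan is to reduce this to Lemma \ref{rnbhlmm} via the variational characterization of total variation distance given in \eqref{tvalt}. Any measurable $f:\Omega_{A,r}\to[-1,1]$ lifts to a measurable $\tilde f:\Omega_N^\circ\to[-1,1]$ by $\tilde f(\omega_1,\omega_2):=f(\omega_1)$, where we use the product decomposition $\Omega_N^\circ=\Omega_{A,r}\times\Omega_{A,r}^*$. By construction $\tilde f$ depends only on $\{\omega_u:u\in E_N^\circ\setminus E(A,r)\}$. By the definition of the marginal and of $\smallavg{\cdot}$ in \eqref{expecdef2},
\[
\int_{\Omega_{A,r}} f\, d\mu_r \;=\; \int_{\Omega_N^\circ}\tilde f\, d\mu \;=\; \smallavg{\tilde f}_\delta,
\]
and similarly $\int f\, d\mu_r'=\smallavg{\tilde f}_{\delta'}$.

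Next, I would invoke Lemma \ref{rnbhlmm} directly. Since $\delta$ and $\delta'$ agree on the complement of $A$, and $\tilde f$ takes values in $[-1,1]$ and depends only on edges outside $E(A,r)$, that lemma gives
\[
\bigl|\smallavg{\tilde f}_\delta-\smallavg{\tilde f}_{\delta'}\bigr|\;\le\; C_1|A|r^{d-1}e^{-C_2 r},
\]
with constants $C_1,C_2$ independent of $f$.

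Taking the supremum over all such $f$ and applying \eqref{tvalt} to the Polish space $\Omega_{A,r}$ (which is a product of copies of the compact metric group $G$) yields
\[
TV(\mu_r,\mu_r') \;=\; \tfrac{1}{2}\sup_f \bigl|\smallavg{\tilde f}_\delta-\smallavg{\tilde f}_{\delta'}\bigr| \;\le\; \tfrac{1}{2}C_1|A|r^{d-1}e^{-C_2 r},
\]
which is the desired bound after absorbing the factor $1/2$ into $C_1$. There is no real obstacle here: the only point requiring a line of care is checking that an arbitrary bounded measurable test function on the marginal space $\Omega_{A,r}$ is indeed of the form to which Lemma \ref{rnbhlmm} applies once lifted, and this is immediate from the product structure.
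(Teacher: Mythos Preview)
Your proof is correct and follows essentially the same route as the paper: lift a test function on $\Omega_{A,r}$ to $\Omega_N^\circ$, apply Lemma~\ref{rnbhlmm}, then invoke the variational formula~\eqref{tvalt}. The paper's proof is just a terser version of what you wrote.
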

\begin{proof}
Let $f:\Omega_{A,r}\to [-1,1]$ be a measurable function. Then $f$ has a natural extension to a function on $\Omega_N^\circ$, which we also denote by $f$. By Lemma \ref{rnbhlmm}, $\smallavg{f}$ changes by at most $C_1 |A| r^{d-1}e^{-C_2r}$ if the boundary condition $\delta$ is replaced by $\delta'$. The result now follows by the formula \eqref{tvalt} for total variation distance.
\end{proof}

\section{Coupling in a cube}\label{cubecouplingsec}
Take any $N\ge 1$ and $1\le r\le N/4$. In this section we give a general prescription for coupling two lattice gauge theories with two different boundary conditions on $B_N$. This is the first step towards defining a similar coupling on a slab, which is the main component of the proof of Theorem \ref{main2}. 

Let us endow the space of all pairs of boundary conditions on $B_N$ with the usual Euclidean metric (viewing it as a subset of a Euclidean space of sufficiently large dimension). Also, let us endow the space of all probability measures on $\Omega_N^\circ \times \Omega_N^\circ$ with the total variation metric. These topologies generate Borel $\sigma$-algebras on the two spaces. We will say that a map from one space to the other is measurable if it  is measurable with respect to these $\sigma$-algebras. 
\begin{lmm}\label{cubecoupling}
Let $\delta$ and $\delta'$ be two boundary conditions on $B_N$ and let $\mu$ and $\mu'$ be the probability measures defined by our lattice gauge theory with these boundary conditions. Let $A$ be the set of all $e\in \partial E_N$ such that $\delta_e\ne \delta_e'$. Let $E(A,r)$ be the union of $E(e,r)$ over all $e\in A$, where $E(e,r)$ is the $r$-neighborhood defined in the previous section.  Then there is a coupling $\gamma$ of $\mu$ and $\mu'$ such that 
\begin{align}\label{gammaexist}
\begin{split}
&\gamma(\{(\omega,\omega'): \omega_u \ne \omega_u' \textup{ for some } u\in E_N^\circ\setminus E(A,r)\}) \\
&\qquad \qquad \qquad\qquad \qquad \le C_1|A|  r^{d-1}e^{-C_2r}.
\end{split}
\end{align}
Moreover, the coupling can be defined in such a way that the map $(\delta,\delta')\mapsto \gamma$ is measurable in the sense defined above.
\end{lmm}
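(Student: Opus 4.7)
The plan is to reduce the problem to coupling the marginals of $\mu$ and $\mu'$ on the ``far'' edges $E_N^\circ\setminus E(A,r)$, then extend to the full configurations by independent conditional sampling on the ``near'' edges $E_N^\circ\cap E(A,r)$. The key quantitative ingredient is already in hand, namely Corollary \ref{rnbhcor}, which controls the total variation distance between these marginals.

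First I would identify $\Omega_N^\circ$ with the product $\Omega_{A,r}\times \Omega_{A,r}^*$, writing a generic configuration as $(\xi,\eta)$ where $\xi$ lives on $E_N^\circ\setminus E(A,r)$ and $\eta$ lives on $E_N^\circ\cap E(A,r)$. Let $\mu_r$ and $\mu_r'$ be the marginals on $\Omega_{A,r}$. By Corollary \ref{rnbhcor},
\[
TV(\mu_r,\mu_r')\le C_1|A|r^{d-1}e^{-C_2 r}.
\]
I would then apply the explicit optimal coupling formula \eqref{gammadef} from Section \ref{tvsec} to obtain a coupling $\gamma_0$ of $\mu_r$ and $\mu_r'$ on $\Omega_{A,r}\times\Omega_{A,r}$ satisfying $\gamma_0(\{(\xi,\xi'):\xi\neq\xi'\})=TV(\mu_r,\mu_r')$.

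Next I would extend $\gamma_0$ to a coupling $\gamma$ of $\mu$ and $\mu'$ as follows. For each $\xi\in\Omega_{A,r}$, let $\kappa(\xi,\cdot)$ denote the conditional distribution under $\mu$ of the $\Omega_{A,r}^*$-component given the $\Omega_{A,r}$-component equals $\xi$; this is an explicit ratio of integrals of $e^{-\beta H_N}$, hence a measurable conditional probability in the sense of Section \ref{condprobsec}. Define $\kappa'(\xi',\cdot)$ analogously from $\mu'$. Then set $\gamma$ to be the law of $((\xi,\eta),(\xi',\eta'))$ obtained by sampling $(\xi,\xi')\sim \gamma_0$ and, conditionally and independently, $\eta\sim\kappa(\xi,\cdot)$ and $\eta'\sim\kappa'(\xi',\cdot)$. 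By the tower property and the Markov structure of lattice gauge theory, the marginals of $\gamma$ are $\mu$ and $\mu'$. The event in \eqref{gammaexist} coincides with $\{\xi\neq\xi'\}$, so its $\gamma$-probability is exactly $TV(\mu_r,\mu_r')$, yielding the claimed bound.

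For the measurability claim, the map $\delta\mapsto\mu$ is continuous in total variation because $\mu$ has a positive density with respect to $\lambda_N$ that is jointly continuous in $(\omega,\delta)$; the same holds for $\delta'\mapsto\mu'$, and consequently for their marginals $\mu_r,\mu_r'$. Lemma \ref{tvmainlmm} then shows that the optimal coupling $\gamma_0$ assembled via \eqref{gammadef} depends continuously, and in particular measurably, on $(\delta,\delta')$. Similarly, the conditional kernels $\kappa,\kappa'$ have densities that are jointly continuous in their arguments and the boundary conditions, so Lemma \ref{condproblmm} yields measurability of the combined construction of $\gamma$ in $(\delta,\delta')$.

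The main obstacle I anticipate is not the probability bound, which falls out directly, but the measurability bookkeeping: one must verify that the Radon--Nikodym derivatives used in \eqref{gammadef} can be chosen jointly measurably in the data $(\delta,\delta')$, and that composing this with the conditional kernels preserves joint measurability. This is purely technical and is handled by noting that all densities in sight are explicit continuous functions of $\delta$, $\delta'$, and the configuration variables, so the required measurable selections exist without invoking any abstract selection theorem.
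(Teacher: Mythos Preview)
Your proposal is correct and follows essentially the same route as the paper: bound the total variation of the marginals on $\Omega_{A,r}$ via Corollary~\ref{rnbhcor}, take the explicit optimal coupling \eqref{gammadef} there, and extend to $\Omega_N^\circ\times\Omega_N^\circ$ by independent conditional sampling on the near edges using Lemma~\ref{condproblmm}. The quantitative bound and the verification that $\gamma$ is a coupling of $\mu$ and $\mu'$ go exactly as you describe.

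One point where the paper is more explicit than your sketch: the decomposition $\Omega_N^\circ=\Omega_{A,r}\times\Omega_{A,r}^*$, and hence the entire construction of $\gamma$, depends on the set $A=\{e:\delta_e\neq\delta_e'\}$, which is a discontinuous function of $(\delta,\delta')$. The paper handles this by partitioning the space of boundary-condition pairs into the measurable pieces $\Delta(A)=\{(\delta,\delta'):\{e:\delta_e\neq\delta_e'\}=A\}$ and proving continuity of $(\delta,\delta')\mapsto\gamma$ separately on each piece (where $A$ is fixed), then assembling measurability from this partition. Your measurability paragraph implicitly assumes $A$ is held fixed; you should make this stratification explicit, since Lemma~\ref{tvmainlmm} is being applied on a space $\Omega_{A,r}$ that itself varies with $(\delta,\delta')$. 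Once that is said, your argument is complete and matches the paper's.
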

\begin{proof}
As in Corollary \ref{rnbhcor}, let us view $\Omega_N^\circ$ as $\Omega_{A,r}\times \Omega_{A,r}^*$. Let $\mu_r$ and $\mu_r'$ be the marginal probabilities of $\mu$ and $\mu'$ on $\Omega_{A,r}$. By Corollary \ref{rnbhcor} and the coupling characterization of total variation distance, there is a coupling $\gamma_r$ of $\mu_r$ and $\mu_r'$ satisfying
\begin{align}\label{gammar}
\gamma_r(\{(x,x')\in \Omega_{A,r}\times \Omega_{A,r}:x\ne x'\})  \le C_1 |A| r^{d-1}e^{-C_2r}. 
\end{align}
Now, given any boundary condition $\delta$, our lattice gauge theory defines a conditional probability $\phi_\delta$ from $\Omega_{A,r}$ to $\Omega_{A,r}^*$.  For $x\in \Omega_{A,r}$ and $y\in \Omega_{A,r}^*$, let $f_\delta(x,y)$ denote the conditional probability density (with respect to normalized product Haar measure) of $\phi_\delta$ at the point $(x,y)$. Then the function $g_{\delta,\delta'}(x,x',y,y') := f_\delta(x,y)f_{\delta'}(x',y')$ 
is a conditional probability density from $\Omega_{A,r}\times \Omega_{A,r}$ to $\Omega_{A,r}^*\times \Omega_{A,r}^*$, corresponding to the conditional probability $\phi_{\delta,\delta'}((x,x'),\cdot) := \phi_\delta(x,\cdot )\times \phi_{\delta'}(x',\cdot)$.  
Using the first part of Lemma \ref{condproblmm} with these conditional probability densities, we can now extend $\gamma_r$ to a probability measure $\gamma$ on $\Omega_N^\circ \times \Omega_N^\circ$. By the definition of $\gamma$ and the property~\eqref{gammar} of $\gamma_r$, we see that  $\gamma$ satisfies \eqref{gammaexist}. Let us now verify that $\gamma$ is a coupling of $\mu$ and $\mu'$. Take any $S\in \mb(\Omega_{N}^\circ)$ of the form $S_1\times S_2$, where $S_1\in \mb(\Omega_{A,r})$ and $S_2 \in \mb(\Omega_{A,r}^*)$. Then 
\begin{align*}
\gamma(S_1\times \Omega_{A,r}\times S_2\times \Omega_{A,r}^* ) &= \int_{S_1\times \Omega_{A,r}} \phi_\delta(x,S_2)\phi_{\delta'}(x',\Omega_{A,r}^*) d\gamma_r(x,x')\\
&= \int_{S_1\times \Omega_{A,r}} \phi_\delta(x,S_2) d\gamma_r(x,x')\\
&= \int_{S_1} \phi_\delta(x,S_2)d\mu_r(x) = \mu(S_1\times S_2). 
\end{align*}
Similarly, $\gamma(\Omega_{A,r}\times S_1\times \Omega_{A,r}^*\times  S_2 ) = \mu'(S_1\times S_2)$. From this, it is easy to see using the uniqueness part of Carath\'eodory's extension theorem that  $\gamma$ is indeed a coupling of $\mu$ and $\mu'$. 

Finally, to prove measurability of $(\delta,\delta')\mapsto \gamma$, we argue as follows. For any $A\subseteq \partial E_N$, let $\Delta(A)$ be the set of all pairs of boundary conditions $(\delta, \delta')$ such that $A = \{e: \delta_e\ne \delta_e'\}$. The space $\Delta$ of all pairs of boundary conditions is the disjoint union of these sets, and each $\Delta(A)$ is a measurable subset of $\Delta$. So it suffices to prove that $(\delta,\delta')\mapsto \gamma$ is measurable on each $\Delta(A)$. We will, in fact, show that this map is continuous on every $\Delta(A)$. 

So take any $A\subseteq \partial E_N$. First, suppose that $A$ is nonempty. It is not hard to verify directly from the definition of lattice gauge theory that $\delta\mapsto \mu$ is a continuous map, and therefore so is $(\delta,\delta')\mapsto (\mu,\mu')$. The definition of total variation distance makes it clear that $\mu\mapsto \mu_r$ is continuous, and therefore so is the map $(\mu,\mu')\mapsto (\mu_r, \mu_r')$. By the inequality from Lemma \ref{tvmainlmm}, $(\mu_r,\mu_r')\mapsto \gamma_r$ is continuous. Combining, we get that $(\delta,\delta')\mapsto \gamma_r$ is a continuous map on $\Delta(A)$.

Now, it is not hard to see that $(\delta,x,y)\mapsto f_\delta(x,y)$ is a continuous map, and therefore uniformly continuous since its domain is compact. Consequently, the map $(\delta,x,y,\delta',x',y') \mapsto g_{\delta,\delta'}(x,x',y,y')$ is uniformly continuous. Thus, by the second part of Lemma \ref{condproblmm}, $(\delta, \delta', \gamma_r) \mapsto \gamma$ is continuous. But as observed above, $\gamma_r$ is itself a continuous function of $(\delta,\delta')$. Thus, $(\delta,\delta')\mapsto \gamma$ is continuous. 

If $A$ is empty, the proof is simpler. In this case, $\gamma_r = \gamma$, so we already have the continuity of $(\delta,\delta')\mapsto \gamma$ from the first step above. 
\end{proof}

\section{Local update map on a slab}\label{localsec}
Given any positive integers $M$ and $N$, define the slab
\[
S_{M,N} := \{-N,\ldots,N\}\times \{-M, \ldots,M\}^{d-1}. 
\]
The faces of $S_{M,N}$ corresponding to $x_1=N$ and $x_1=-N$ will be called the `temporal' faces (because the first coordinate denotes time), and the remaining part of the boundary will be called the `spatial boundary' of $S_{M,N}$. 

Let $E_{M,N}$  be the set of positively oriented edges of $S_{M,N}$, $E_{M,N}^\circ$ be the set of positively oriented interior edges, and $\partial E_{M,N}$ to be the set of positively oriented boundary edges. The spatial boundary will be denoted by $\partial' E_{M,N}$. The set of all maps from $E_{M,N}^\circ$ into $G$ will be denoted by $\Omega_{M,N}^\circ$, and the set of all maps from $\partial E_{M,N}$ into $G$ by $\partial \Omega_{M,N}$. 

Take any $M$ and $N$, and let $\delta$ and $\delta'$ be two boundary conditions on $S_{M,N}$ that agree on the temporal faces. Let $\mu$ and $\mu'$ be the probability measures defined by these two boundary conditions, according to our lattice gauge theory. Take any $1\le r\le N/4$. We will now define the {\it local update map}, which maps any coupling of $\mu$ and $\mu'$ to a `better' coupling of $\mu$ and $\mu'$.  This is the second step towards the construction of a coupling in a slab. 

Take any translate $B$ of the cube $B_N$ that is contained in $S_{M,N}$, such that $B$ has no intersection with the spatial boundary of $S_{M,N}$. (Assume that $M > N$, so that such a $B$ exists.) Then $B$ must be of the form
\[
B = ([-N, N] \times [a_2,b_2]\times \cdots \times [a_d,b_d])\cap \zz^d,
\]
where $a_i$ and $b_i$ are integers such that $-M< a_i< b_i< M$ and $b_i-a_i = 2N$ for each $i$. Let $\fb$ be the set of all such $B$. (See Figure \ref{cubeslabpic}.)


\begin{figure}[t!]
\begin{center}
\begin{tikzpicture}[scale = 1]
\draw[lightgray, fill] (3,0) rectangle (6,3);
\draw[thick] (0,0) rectangle (10,3);
\draw[thick] (3,0) rectangle (6,3);
\node at (4.5, 1.5) {$B$};
\draw [thick, decorate, decoration = {brace, amplitude = 7pt}] (-.1,0) to (-.1,3);
\draw [thick, decorate, decoration = {brace, mirror, amplitude = 10pt}] (0,-.1) to (10,-.1);
\node at (-.7, 1.5) {$2N$};
\node at (5, -.7) {$2M$};
\end{tikzpicture}
\caption{A cube of width $2N$ sitting inside the slab $S_{M,N}$.\label{cubeslabpic}}
\end{center}
\end{figure}
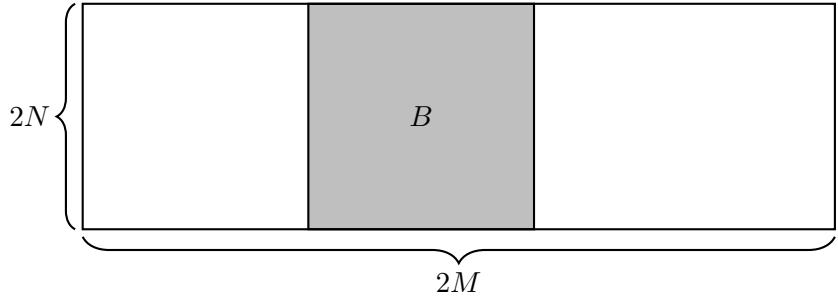


Let $E^\circ$ be the set of interior edges  of $B$. Let $\Omega_B$ be the set of all functions from $E_{M,N}^\circ \setminus E^\circ$ into $G$, and let $\Omega_B^*$ be the set of all functions from $E^\circ$ into $G$. Let $\gamma$ be a coupling of $\mu$ and $\mu'$. Viewing $\Omega_{M,N}^\circ \times \Omega_{M,N}^\circ$ as $\Omega_B \times \Omega_B\times \Omega_B^* \times\Omega_B^*$, let $\gamma_B$ be the marginal probability of $\gamma$ on $\Omega_B\times \Omega_B$. 

Let $\mu_B$ and $\mu'_B$ be the marginal probabilities of $\mu$ and $\mu'$ on $\Omega_B$, viewing $\Omega_N^\circ$ as $\Omega_B \times \Omega_B^*$. We claim that $\mu_B$ and $\mu'_B$ are also the marginal probabilities of the probability measure $\gamma_B$. To see this, take any $S\in \mb(\Omega_B)$. Then
\begin{align*}
\gamma_B(S\times \Omega_B) &= \gamma(S\times \Omega_B \times \Omega_B^*\times \Omega_B^*)\\
&= \mu(S\times \Omega_B^*) \ \ \ \text{(since $\gamma$ is a coupling of $\mu$ and $\mu'$)}\\
&= \mu_B(S), 
\end{align*}
and similarly, $\gamma_B(\Omega_B \times S) = \mu'_B(S)$.

Now, any pair $(x,x')\in \Omega_B\times \Omega_B$ defines a pair of boundary conditions on $B$. By Lemma \ref{cubecoupling}, this pair of boundary conditions  lets us define a probability measure $\phi((x,x'),\cdot)$ on $\Omega_B^*\times \Omega_B^*$, which is a coupling of the two lattice gauge theories $\mu_x$ and $\mu_{x'}$ defined by the two boundary conditions. By the measurability assertion of Lemma \ref{cubecoupling}, $\phi$ is a conditional probability from $\Omega_B\times \Omega_B$ to $\Omega_B^*\times \Omega_B^*$. Using $\gamma_B$ and $\phi$, we can now invoke Lemma \ref{condproblmm} to define a probability measure $\tilde{\gamma}$ on $\Omega_{M,N}^\circ \times \Omega_{M,N}^\circ$. We claim that $\tilde{\gamma}$ is a coupling of $\mu$ and $\mu'$. To see this, take any $S_1\in \mb(\Omega_B)$ and $S_2\in \mb(\Omega_B^*)$. Then 
\begin{align*}
\tilde{\gamma}(S_1\times \Omega_{B}\times S_2\times \Omega_B^* ) &= \int_{S_1\times \Omega_B} \phi((x,x'),S_2\times \Omega_B^*) d\gamma_B(x,x')\\
&= \int_{S_1\times \Omega_B} \mu_x(S_2) d\gamma_B(x,x')\\
&= \int_{S_1} \mu_x(S_2)d\mu_B(x) = \mu(S_1\times S_2),
\end{align*}
where the second-to-last identity holds because $\mu_B$ is a marginal probability of $\gamma_B$, as deduced above.  Similarly,
\begin{align*}
\tilde{\gamma}(\Omega_B\times S_1\times \Omega_B^*\times  S_2 ) &= \mu'(S_1\times S_2). 
\end{align*}
Thus, $\tilde{\gamma}$ is indeed a coupling of $\mu$ and $\mu'$. Let us denote $\tilde{\gamma}$ by $\tau_B(\gamma)$, viewing it as a function of $\gamma$. The map $\tau_B$ will be called the {\it local update map} corresponding to the cube $B$. Note that the definition of $\tau_B$ depends not only on $B$, but also on $M$, $N$, $\delta$, $\delta'$ and $r$, but we will consider those as fixed. 

For any coupling $\gamma$ of $\mu$ and $\mu'$, and any edge $e\in E_{M,N}^\circ$, let
\begin{align}\label{fdef}
\rho(\gamma, e) := \gamma(\{(\omega,\omega'): \omega_e\ne \omega_e'\}).
\end{align}
For any edge $e\in E_{M,N}^\circ$, let $U(e)$ be the set of all edges $u\in E_{M,N}^\circ$ such that $e$ and $u$ are both in some common cube of width $4r$, and let $V(e)$ be the set of all $u\in E_{M,N}^\circ$ such that both $e$ and $u$ are in some common cube of width $2N$. 
The following lemma estimates how $\rho$ changes under a local update map.
\begin{lmm}\label{localcoupling}
Let all notation be as above. Take any $B\in \fb$, any $e\in E_{M,N}^\circ$ and any coupling $\gamma$ of $\mu$ and $\mu'$. Let $E^\circ$ be the set of interior edges of $B$, and let $\partial'E$ be the spatial boundary of $B$. If $e\notin E^\circ$, then $\rho(\tau_B(\gamma),e)=\rho(\gamma,e)$. On the other hand, if $e\in E^\circ$, then 
\begin{align*}
\rho(\tau_B(\gamma), e) &\le C_1N^{d-1} e^{-C_2r}\sum_{u\in \partial' E\cap V(e)} \rho(\gamma,u)  + \sum_{u\in \partial'E \cap U(e) } \rho(\gamma, u) .
\end{align*}
\end{lmm}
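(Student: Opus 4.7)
The plan is to dispose of the case $e \notin E^\circ$ in one line, and then leverage the construction of $\tau_B(\gamma)$ together with Lemma~\ref{cubecoupling} applied inside the cube $B$ for the case $e \in E^\circ$. When $e \notin E^\circ$, the coordinate $\omega_e$ lives in the ``outer'' factor $\Omega_B$, and by the very construction of $\tau_B(\gamma)$ its marginal on $\Omega_B \times \Omega_B$ coincides with $\gamma_B$, the $\Omega_B \times \Omega_B$-marginal of $\gamma$ itself. Hence $\rho(\tau_B(\gamma),e) = \rho(\gamma,e)$ with no further work.

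For $e \in E^\circ$, I would unpack the definition of $\tau_B(\gamma)$: it is obtained by integrating the conditional coupling $\phi((x,x'),\cdot)$ on $\Omega_B^*\times\Omega_B^*$ against $\gamma_B$, where $\phi((x,x'),\cdot)$ is the optimal Lemma~\ref{cubecoupling} coupling of the two lattice gauge theories on $B$ whose boundary conditions $(\bar\delta,\bar\delta')$ are determined by $(x,x',\delta,\delta')$. Writing
\[
\rho(\tau_B(\gamma),e) = \int \phi\bigl((x,x'),\{(\omega,\omega'):\omega_e\ne\omega_e'\}\bigr)\,d\gamma_B(x,x'),
\]
the next step is the crucial geometric observation: since $\delta$ and $\delta'$ agree on the temporal faces of $S_{M,N}$, and the temporal faces of $B$ coincide with those of the slab, the induced pair $(\bar\delta,\bar\delta')$ agrees off $\partial' E$. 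Consequently the set of disagreement edges $A = A(x,x') = \{u \in \partial E : \bar\delta_u \ne \bar\delta'_u\}$ appearing in Lemma~\ref{cubecoupling} is always a subset of $\partial' E$, and membership $u \in A$ is equivalent to $u \in \partial' E$ together with $x_u \ne x'_u$.

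I would then split the integrand according to whether $e \in E(A,r)$. On the event $\{e \notin E(A,r)\}$, Lemma~\ref{cubecoupling} gives $\phi\bigl((x,x'),\{\omega_e\ne\omega_e'\}\bigr) \le C_1 |A| r^{d-1} e^{-C_2 r}$; integrating and using $|A| \le \sum_{u \in \partial'E} \mathbf{1}\{x_u \ne x'_u\}$ yields at most $C_1 r^{d-1} e^{-C_2 r} \sum_{u \in \partial' E} \rho(\gamma,u)$. On $\{e \in E(A,r)\}$ I would use only the trivial bound $1$ and note that $e \in E(A,r)$ forces the existence of some $u \in A$ with $e \in E(u,r)$; since $B(u,r)$ is a cube of width $4r$ containing both $e$ and $u$, this $u$ must lie in $U(e)$. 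A union bound then gives $\mathbb{P}_{\gamma_B}(e \in E(A,r)) \le \sum_{u \in \partial' E \cap U(e)} \rho(\gamma,u)$. Because $r \le N/4$ yields $r^{d-1} \le C N^{d-1}$, and because $B$ is a cube of width $2N$ containing both $e$ and all of $\partial' E$, so that $\partial' E \subseteq V(e)$, the two bounds combine into the claimed inequality after absorbing constants.

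The proof is essentially bookkeeping; the only substantive points are the identification $A \subseteq \partial' E$, which is a direct consequence of the hypothesis that $\delta$ and $\delta'$ coincide on the temporal faces (and is the reason that hypothesis appears), and the geometric fact $e \in E(u,r) \Rightarrow u \in U(e)$, which is immediate from the definitions of the $r$-neighborhood cube and $U(e)$. I do not anticipate any real analytic obstacle once Lemma~\ref{cubecoupling} is in hand.
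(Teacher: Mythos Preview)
Your proposal is correct and follows essentially the same approach as the paper. The only cosmetic difference is that you split the integrand on the event $\{e \in E(A,r)\}$ (which is exactly the condition appearing in Lemma~\ref{cubecoupling}), whereas the paper splits on the slightly larger event $\{U(e)\cap A \ne \emptyset\}$; since $e \in E(A,r)$ implies $U(e)\cap A \ne \emptyset$ via the same geometric observation you record, the two splits yield identical bounds after integration.
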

\begin{proof}
For simplicity of notation, let $\tilde{\gamma} := \tau_B(\gamma)$. Take any $(\omega,\omega')\in \Omega_{M,N}^\circ \times \Omega_{M,N}^\circ$. Write $(\omega,\omega')$ as an element $(x,x', y, y')$ of $\Omega_B\times \Omega_B \times \Omega_B^*\times \Omega_B^*$. If $e\notin E^\circ$, then $\omega_e\ne \omega'_e$ means that $x_e\ne x_e'$. So in this case, by the definitions of $\gamma$ and $\tilde{\gamma}$, 
\begin{align*}
\rho(\tilde{\gamma}, e) &= \tilde{\gamma}(\{(\omega,\omega'):\omega_e\ne \omega_e'\}) \\
&= \gamma_B(\{(x,x'): x_e\ne x_e'\}) = \rho(\gamma, e).
\end{align*}
Next, suppose that $e\in E^\circ$. Then $\omega_e\ne \omega_e'$ means that $y_e\ne y_e'$. Thus,
\begin{align}
\rho(\tilde{\gamma}, e) &= \tilde{\gamma}(\{(\omega,\omega'):\omega_e\ne \omega_e'\})\notag \\
&= \int_{\Omega_B\times\Omega_B} \phi((x,x'), \{(y,y'): y_e\ne y_e'\}) d\gamma_B(x,x'). \label{badcond0}
\end{align}
Take any $(x,x')$. Let 
\[
p := \phi((x,x'), \{(y,y'): y_e\ne y_e'\}).
\]
Let $A$ be the set of $u\in \partial'E$ such that $x_u\ne x_u'$. Since $\delta$ and $\delta'$ agree on the temporal faces, and $B$ has no intersection with the spatial boundary of $S_{M,N}$, we see that $A$ consists of all boundary edges of $B$ where the boundary conditions defined by $x$ and $x'$ disagree. So by the construction of $\phi$ (using Lemma \ref{cubecoupling}), we see that if $U(e)$ has no intersection with $A$, then 
\[
p \le C_1 |A| r^{d-1} e^{-C_2r}\le C_1 N^{d-1} e^{-C_2r}|A|. 
\] 
If $U(e)$ intersects $A$, we simply use $p\le 1$.  Combining, we get that for any $(x,x')$ and $e$, 
\begin{align}\label{badp}
p &\le C_1 N^{d-1} e^{-C_2r}|A| + |U(e)\cap A|. 
\end{align}
But note that by the definition of $\gamma_B$,
\begin{align*}
 \int_{\Omega_B\times\Omega_B} |A| d\gamma_B(x,x') &= \int_{\Omega_{M,N}^\circ\times \Omega_{M,N}^\circ} |\{u\in \partial'E: \omega_u\ne \omega'_u\}| d\gamma(\omega, \omega')\\
 &= \sum_{u\in \partial'E} \gamma(\{(\omega,\omega'): \omega_u\ne \omega'_u\})\\
 &= \sum_{u\in \partial' E} \rho(\gamma, u). 
\end{align*}
We can replace $\partial'E$ by $\partial'E \cap V(e)$ in the above sum since the two sets are equal for any $e\in E^\circ$. Thus,
\begin{align}\label{badcond1}
\int_{\Omega_B\times\Omega_B} |A| d\gamma_B(x,x')  &= \sum_{u\in \partial' E\cap V(e)} \rho(\gamma, u).
\end{align}
Similarly,
\begin{align}\label{badcond2}
 \int_{\Omega_B\times\Omega_B}|U(e)\cap A|d\gamma_B(x,x')  &= \sum_{u\in\partial'E\cap U(e)} \rho(\gamma, u). 
\end{align}
Using the information obtained from \eqref{badp}, \eqref{badcond1} and \eqref{badcond2} in \eqref{badcond0}, we get the required upper bound.
\end{proof}

\section{Global update map on a slab}\label{globalsec}
Let us continue to use the notations introduced in the previous section. In particular, let us fix $M$, $N$, $\delta$, $\delta'$ and $r$ as before. We define the  {\it global update map} on the space of all couplings of $\mu$ and $\mu'$ as
\[
\tau(\gamma) := \frac{1}{|\fb|}\sum_{B\in \fb} \tau_B(\gamma). 
\]
If $\fb$ is empty (which happens if $M\le  N$), just let $\tau(\gamma):=\gamma$.  
For any edge $e$, let $\fb(e)$ be the set of all $B\in \fb$ such that $e$ is an interior edge of~$B$.  The following lemma quantifies how the global update map `improves' any given coupling. This is the third step towards the construction of the coupling in a slab. 
\begin{lmm}\label{globalupdate}
Suppose that $M> N$, so that $\fb$ is nonempty. Then for any coupling $\gamma$ of $\mu$ and $\mu'$, and any $e\in E_{M,N}^\circ$,
\begin{align*}
\rho(\tau(\gamma),e)&\le \biggl(1-\frac{|\fb(e)|}{|\fb|}\biggr)\rho(\gamma, e) +  \frac{C_1 N^{2d-3} e^{-C_2r}}{|\fb|} \sum_{u\in V(e)} \rho(\gamma,u)\\
&\qquad + \frac{C_3 N^{d-2}}{|\fb|}\sum_{u\in U(e)} \rho(\gamma, u). 
\end{align*} 
\end{lmm}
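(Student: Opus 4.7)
The plan is to deduce this from Lemma \ref{localcoupling} by averaging. Since $\tau(\gamma)$ is, by definition, a convex combination of probability measures, and $\rho(\cdot, e)$ is just the measure of the fixed measurable set $\{(\omega,\omega'):\omega_e\ne \omega_e'\}$, the functional $\rho(\cdot,e)$ is affine in its argument. Hence
\[
\rho(\tau(\gamma), e) = \frac{1}{|\fb|} \sum_{B \in \fb} \rho(\tau_B(\gamma), e).
\]
I would split this sum according to whether $B\in \fb(e)$ or not. For $B \notin \fb(e)$, the first assertion of Lemma \ref{localcoupling} gives $\rho(\tau_B(\gamma), e) = \rho(\gamma, e)$, and these $|\fb|-|\fb(e)|$ terms contribute exactly $(1-|\fb(e)|/|\fb|)\rho(\gamma,e)$, which matches the first term of the claimed bound.

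For the remaining $B \in \fb(e)$, I would apply the second assertion of Lemma \ref{localcoupling} to each term:
\[
\rho(\tau_B(\gamma), e) \le C_1 N^{d-1} e^{-C_2 r} \!\!\!\sum_{u \in \partial'E \cap V(e)} \!\!\!\rho(\gamma, u) + \!\!\!\sum_{u \in \partial'E \cap U(e)} \!\!\!\rho(\gamma, u),
\]
where $\partial'E$ denotes the spatial boundary of $B$, and then swap the order of summation over $B$ and $u$. After swapping, each $\rho(\gamma, u)$ acquires a coefficient
\[
\alpha(u) := |\{B \in \fb(e) : u \in \partial'E(B)\}|.
\]
(Note $U(e)\subseteq V(e)$ since $4r\le N\le 2N$, so the same counting applies to both sums.)

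The combinatorial heart of the argument is the bound $\alpha(u) \le C N^{d-2}$. Since every $B \in \fb$ has fixed temporal extent $[-N,N]$, it is specified by its spatial position $(a_2,\ldots,a_d)$. For $u$ to lie in $\partial'E(B)$, there must exist a coordinate $i \in \{2,\ldots,d\}$ such that both endpoints of $u$ have coordinate $i$ equal to $a_i$ or to $a_i + 2N$; this determines $a_i$, and there are at most $2(d-1)$ such choices of $(i,\text{face})$. The remaining $d-2$ spatial coordinates of $B$ must each lie in a window of width about $2N$ (dictated by requiring $e$ to be an interior edge of $B$), giving $O(N)$ choices each, for a total of $O(N^{d-2})$. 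Combining yields
\[
\frac{1}{|\fb|}\!\!\sum_{B \in \fb(e)}\!\! \rho(\tau_B(\gamma), e) \le \frac{C N^{2d-3}e^{-C_2 r}}{|\fb|} \sum_{u\in V(e)}\rho(\gamma,u) + \frac{CN^{d-2}}{|\fb|}\sum_{u\in U(e)} \rho(\gamma,u),
\]
which is exactly the pair of remaining terms in the statement.

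The main obstacle I expect is the combinatorial counting $\alpha(u)\le C N^{d-2}$: one must carefully enumerate how the constraint that $u$ lies on some spatial face of $B$, together with the constraint that $e$ lies strictly inside $B$, reduces the dimension of the parameter space of admissible cubes by exactly one. Once this count is rigorously established (by case analysis on which coordinate(s) of $u$ are pinned to $a_i$ or $b_i$, and handling possible overlaps when $u$ sits on an edge or corner of the spatial boundary via a simple union bound), the rest of the proof is a routine rearrangement of sums and absorption of constants.
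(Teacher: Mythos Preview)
Your proposal is correct and follows essentially the same approach as the paper: linearity of $\rho(\cdot,e)$, splitting the sum over $\fb$ according to membership in $\fb(e)$, applying the two cases of Lemma~\ref{localcoupling}, swapping sums, and invoking the count $|\{B:u\in\partial'E(B)\}|\le CN^{d-2}$. The only inessential difference is that the paper counts over all $B\in\fb$ (not just $B\in\fb(e)$), which makes the combinatorics marginally simpler since the window for the free $d-2$ spatial coordinates comes from the constraint that $u$ lies in $B$ rather than from $e$ being interior; your version is of course bounded by this and works equally well.
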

\begin{proof}
Note  that for any edge $e$, there can be at most $CN^{d-2}$ cubes $B\in \fb$ such that $e\in \partial' E$ (where, as before, $\partial' E$ denotes the spatial boundary of $B$). Therefore by Lemma \ref{localcoupling}, 
\begin{align*}
\sum_{B\in \fb} \rho(\tau_B(\gamma), e) &\le (|\fb| - |\fb(e)|) \rho(\gamma, e) + C_1 N^{2d-3} e^{-C_2r} \sum_{u\in V(e)} \rho(\gamma,u)\\
&\qquad + C_3 N^{d-2}\sum_{u\in U(e)} \rho(\gamma, u). 
\end{align*}
Since the function $\rho$ is linear in $\gamma$,
\begin{align*}
\rho(\tau(\gamma), e) &= \frac{1}{|\fb|}\sum_{B\in \fb} \rho(\tau_B(\gamma),e).
\end{align*}
Combining the above identity with the inequality from the previous display, we get the required bound. 
\end{proof}

\section{Coupling in a slab}\label{slabcouplingsec}
Let $M$, $N$, $\delta$, $\delta'$ and $r$ be fixed as in the previous two sections. We will now construct a coupling of $\mu$ and $\mu'$ using an infinite number of iterations of the global update map $\tau$. Start with the coupling $\gamma_0 := \mu\times \mu'$. For each $n$, let $\gamma_n := \tau^n(\gamma)$. 
Since $\{\gamma_n\}_{n\ge0}$ is a sequence of probability measures on the compact Polish space $\Omega_{M,N}^\circ \times \Omega_{M,N}^\circ$, it has a subsequence $\{\gamma_{n_k}\}_{k\ge 1}$ converging weakly to a limit $\gamma$.  It is not hard to see that  $\gamma$ is also a coupling of $\mu$ and $\mu'$. The following lemma gives the main property of this coupling that will be useful for us.
\begin{lmm}\label{slabcoupling}
There is some $N_0$ depending only on $G$, $\beta$ and $d$, such that if $N\ge N_0$, and $r$ is chosen to be $\lfloor (\log N)^2\rfloor$, then for any $M$ and any $e\in E_{M,N}^\circ$,
\begin{align}\label{fussy}
\rho(\gamma,e) &\le C_1 \exp\biggl(-\frac{C_2\dist(e,\partial' E_{M,N})}{N}\biggr),
\end{align}
where $\dist(e,\partial' E_{M,N})$ is the minimum value of $\dist(e,u)$ over all $u\in \partial' E_{M,N}$. 
\end{lmm}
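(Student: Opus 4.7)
The plan is to iterate the bound from Lemma \ref{globalupdate} across level sets at spatial scale $N$, show that the resulting linear recursion contracts once $r = \lfloor(\log N)^2\rfloor$, and then transfer the estimate to $\gamma$ via lower semi-continuity of $\rho(\cdot, e)$ under weak convergence.

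The regime where $\fb$ is empty (i.e., $M \leq N$), and more generally where $\dist(e,\partial' E_{M,N})$ is $O(N)$, will be handled by the trivial bound $\rho(\gamma,e)\leq 1$, which already implies \eqref{fussy} once $C_1$ is chosen large enough. In the main regime, I would set $\ell := 2N\sqrt{d}$ and define $\Lambda_k := \{e\in E_{M,N}^\circ: \dist(e,\partial'E_{M,N})\geq k\ell\}$. A short geometric check from the definitions of $V(e)$ and $U(e)$ gives $\|u-e\|_2\leq 2N\sqrt{d}$ for $u\in V(e)$ and $\|u-e\|_2\leq 4r\sqrt{d}$ for $u\in U(e)$, so $V(e)\cup U(e)\subseteq \Lambda_k$ whenever $e\in\Lambda_{k+1}$ and $N\geq 2r$. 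The counting bounds $|V(e)|\leq CN^d$ and $|U(e)|\leq Cr^d$ are immediate, and the uniform lower bound $|\fb(e)|\geq (2N-1)^{d-1}$ on $\Lambda_1$ follows from a direct count of admissible spatial translates, since $\dist(e,\partial'E_{M,N})\geq 2N$ forces every such translate to place $e$ in its interior.

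Setting $R_n(k) := \sup_{e\in\Lambda_k}\rho(\gamma_n,e)\in[0,1]$, Lemma \ref{globalupdate} then yields the recurrence
\[
R_{n+1}(k+1) \leq (1 - p_{\min})\, R_n(k+1) + \tilde{q}\, R_n(k),
\]
with $p_{\min} := (2N-1)^{d-1}/|\fb|$ and $\tilde{q} := (C_1 N^{3d-3}e^{-C_2 r} + C_3 N^{d-2}r^d)/|\fb|$. The essential point is that $|\fb|$ cancels from the ratio $\tilde{q}/p_{\min}\leq C_4 N^{2d-2}e^{-C_2 r} + C_5 r^d/N$, which is therefore independent of $M$. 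With $r=\lfloor(\log N)^2\rfloor$ the first summand decays faster than any power of $N$ and the second is $(\log N)^{2d}/N\to 0$, so I would pick $N_0$ so that $\tilde{q}/p_{\min}\leq 1/2$ for $N\geq N_0$. Writing $L_k := \limsup_{n\to\infty}R_n(k)$ and applying $\limsup$ to the recurrence (using its subadditivity and $L_{k+1}\leq 1$) gives $p_{\min}L_{k+1}\leq \tilde{q}\, L_k$, whence $L_k\leq 2^{-k}$.

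Since $\gamma$ is a weak subsequential limit of $\{\gamma_n\}$ and the event $\{\omega_e=\omega_e'\}$ is closed in the product topology, $\rho(\cdot,e)$ is lower semi-continuous under weak convergence (Portmanteau), so $\rho(\gamma,e)\leq \liminf_{j}\rho(\gamma_{n_j},e)\leq L_k\leq 2^{-k}$ for every $e\in\Lambda_k$. Applying this with $k=\lfloor\dist(e,\partial'E_{M,N})/\ell\rfloor$ and combining with the trivial bound on $\Lambda_0\setminus\Lambda_1$ produces \eqref{fussy} with $C_2 = (\log 2)/(2\sqrt{d})$ and a suitable $C_1$. The main obstacle I anticipate is the geometric bookkeeping that makes $\tilde{q}/p_{\min}$ independent of $M$: uniformity of the $|\fb(e)|$ lower bound across $\Lambda_1$ and the choice $\ell = 2N\sqrt{d}$ ensuring $V(e)\cup U(e)\subseteq\Lambda_k$. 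Once those are secured, the super-polynomial factor $e^{-C_2(\log N)^2}$ absorbs the $N^{2d-2}$ in the ratio and the contraction is automatic.
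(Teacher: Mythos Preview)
Your proposal is correct and follows essentially the same route as the paper: both arguments pass to $\limsup$ of $\rho(\gamma_n,e)$, use Lemma~\ref{globalupdate} to obtain a contraction inequality once $r=\lfloor(\log N)^2\rfloor$ makes the ratio $C N^{2d-2}e^{-C_2 r}+Cr^d/N$ smaller than $1/2$, iterate at spatial scale $\sim N$, and transfer to $\gamma$ via the portmanteau lemma. The only organizational difference is that you group edges into level sets $\Lambda_k$ and track the suprema $R_n(k)$, deriving $L_{k+1}\le \tfrac12 L_k$ directly, whereas the paper works pointwise: it fixes $e$, rearranges the limsup inequality to $p(e)\le \tfrac12\max_{u\in V(e)}p(u)$, and then follows a chain $e=e_0,e_1,\ldots$ with $p(e_i)\le \tfrac12 p(e_{i+1})$ until reaching the region $\{\dist(\cdot,\partial'E_{M,N})\le 3N\}$. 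Your level-set formulation is arguably tidier, but the content is identical.
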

\begin{proof}
Note that the distance of any edge $e$ to the spatial boundary of $S_{M,N}$ is bounded above by a constant times $M$, where the constant depends only on $d$. This shows that if $M\le N$, the proof is trivial, because the right side is bounded below by a positive constant depending only on $d$, whereas the left side is bounded above by $1$. So let us assume that $M> N$, which renders $\fb$ nonempty. For each $e$, let 
\[
p(e) := \limsup_{n\to\infty} \rho(\gamma_n, e).
\]
Since $\{(\omega,\omega'):\omega_e\ne \omega_e'\}$ is a relatively open subset of $\Omega_{M,N}^\circ \times \Omega_{M,N}^\circ$, the portmanteau lemma for weak convergence \cite[Theorem 3.9.1]{durrett10} gives us that 
\begin{align}\label{peineq}
\rho(\gamma, e) &\le \liminf_{k\to\infty} \rho(\gamma_{n_k},e)\notag \\
&\le \limsup_{k\to\infty} \rho(\gamma_{n_k},e) \le \limsup_{n\to\infty} \rho(\gamma_n,e)=p(e). 
\end{align} 
Thus, it suffices to get an upper bound for $p(e)$. Now, by Lemma \ref{globalupdate},
\begin{align*}
p(e)&\le \biggl(1-\frac{|\fb(e)|}{|\fb|}\biggr)p(e) +  \frac{C_1 N^{2d-3} e^{-C_2r}}{|\fb|} \sum_{u\in V(e)} p(u)\\
&\qquad + \frac{C_3 N^{d-2}}{|\fb|}\sum_{u\in U(e)} p(u)
\end{align*}
for every $e$. If $\fb(e)$ is nonempty, this can be rearranged as
\begin{align}\label{peinequality}
p(e)&\le  \frac{C_1 N^{2d-3} e^{-C_2r}}{|\fb(e)|} \sum_{u\in V(e)} p(u)+ \frac{C_3 N^{d-2}}{|\fb(e)|}\sum_{u\in U(e)} p(u).
\end{align}
Let $E'$ be the set of all $e\in E_{M,N}^\circ$ such that $\dist(e, \partial' E_{M,N})> 3N$. Clearly, it suffices to prove the required bound for $e\in E'$, because if $\dist(e, \partial' E_{M,N})\le  3N$, then the right side of \eqref{fussy} is bounded below by a positive constant depending only on $d$. So let us take any such $e$. It is easy to see that $|\fb(e)|> CN^{d-1}$.  Also, $|V(e)|\le CN^{d}$ and $|U(e)|\le Cr^{d}$. Therefore by \eqref{peinequality}, we get
\begin{align*}
p(e)&\le  C_1 N^{2d-2} e^{-C_2r} \max_{u\in V(e)} p(u)+ \frac{C_3 r^{d}}{N}\max_{u\in U(e)} p(u).
\end{align*}
Now, if $N$ is large enough (depending on $G$, $\beta$ and $d$) and $r = \lfloor (\log N)^2\rfloor$, then the coefficients in front both the maxima on the right are less than $1/4$. Since $U(e)\subseteq V(e)$, this implies that there is some $e_1\in V(e)$ such that
\[
p(e)\le \frac{p(e_1)}{2}.
\]
If $e_1$ is also in $E'$, then by the above argument applied to $e_1$ instead of $e$, we get an edge $e_2$ such that $p(e_1)\le p(e_2)/2$, and so on. Since $e_{i+1}\in V(e_i)$ for each $i$ (with $e_0=e$), it follows that each $e_{i+1}$ is within distance $CN$ from $e_i$. Therefore, the minimum $i$ such that $e_i\not\in E'$ must be at least $CN^{-1}\dist(e, \partial' E_{M,N})$. The final $p(e_i)$ can simply be bounded by $1$. This completes the proof of the lemma.
\end{proof}


\section{Proof of Theorem \ref{main2}}
Let $N_0$ be as in Lemma \ref{slabcoupling}, and take any $N\ge N_0$ and any $M\ge 1$. Take any two boundary conditions $\delta$ and $\delta'$ for $S_{M,N}$ that agree on the temporal faces. Let $\mu$ and $\mu'$ be the probability measures defined by our lattice gauge theory on $S_{M,N}$ with these boundary conditions. 

By Lemma \ref{slabcoupling}, it is possible to construct a coupling of $\mu$ and $\mu'$ such that if $(\omega, \omega')$ is a pair of coupled configurations, then for any edge $e$ in the slab, the chance of $\omega_e\ne \omega_e'$ is exponentially small in the distance of $e$ from the spatial boundary. From this, it follows that if $f$ is a bounded measurable function of $\{\omega_e\}_{e\in A}$ for some fixed set $A$, then $|\int f d\mu - \int fd\mu'|$ falls off exponentially in the distance of $A$ from the spatial boundary of $S_{M,N}$. In particular, if we let $M\to \infty$, then it is impossible to produce a sequence of boundary conditions that differ only on the spatial boundaries, such that $\int fd\mu - \int fd\mu'$ does not tend to zero. And from this, it follows that for any boundary condition on the infinite slab $\{-N,\ldots,N\} \times \zz^{d-1}$, there is a unique Gibbs measure for our lattice gauge theory on the slab. Uniqueness of the Gibbs measure trivially implies that center symmetry cannot be spontaneously broken. This proves the first assertion of Theorem \ref{main2}. 

The above argument also implies that if $f$ is a vertical chain variable associated with the vertical chain through the center of $S_{M,N}$, its expected value in $S_{M,N}$ must tend to its expected value under the unique Gibbs measure on the infinite slab exponentially fast in $M$ as $M\to\infty$. But its expected value under the Gibbs measure must be zero due to center symmetry and the fact that $f$ transforms to $cf$ under the center transform defined in Section \ref{main1proof}. This shows that the function $V$ from Lemma~\ref{vertchainlmm} must have at least linear growth. Thus, from the proof of Theorem~\ref{main1}, we now see that in the setting of Theorem~\ref{main2}, we have
\begin{align}\label{wilson1}
|\smallavg{W_\ell}|\le e^{(C_1-C_2R)T}
\end{align}
for any rectangular loop with side-lengths $R\le T$. This almost proves the second assertion of  Theorem \ref{main2}, except that we have to remove the $C_1T$ term from the exponent. The first step towards eliminating this term is the following lemma. Recall that $\pi$ is a finite-dimensional irreducible unitary representation of $G$ which acts nontrivially on the center of $G$. 

\begin{lmm}\label{haarlmm}
Let $\lambda_0$ denote the normalized Haar measure on $G$. Let $\rho$ be a probability density with respect to $\lambda_0$. Suppose that there are positive constants $a$ and $b$ such that $a\le \rho(g)\le b$ for all $g\in G$. Then there is some $\ve\in (0,1)$ depending only on $G$, $\pi$, $a$, and $b$, such that for any function $f:G\to\cc$ which is a composition of  $\pi$ followed by a linear map,
\[
\biggl|\int_G f(g)\rho(g)d\lambda_0(g)\biggr|^2 \le (1-\ve)^2 \int_G|f(g)|^2\rho(g) d\lambda_0(g).
\]
\end{lmm}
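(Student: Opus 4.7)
The plan is to view the desired inequality as a strict form of Cauchy--Schwarz in the weighted Hilbert space $L^2(\rho\lambda_0)$: writing $\langle u,v\rangle_\rho := \int_G u\bar v\,\rho\,d\lambda_0$, the claim reads $|\langle f,1\rangle_\rho|^2\le(1-\ve)^2\|f\|_\rho^2\|1\|_\rho^2$, since $\|1\|_\rho^2=1$. What must be shown is that the cosine of the angle between $f$ and the constants in $L^2(\rho\lambda_0)$ is bounded away from $1$, uniformly over all admissible $\rho$ and $f$. The structural input I would exploit is that every such $f$ lies in the finite-dimensional complex vector space $V$ spanned by the matrix coefficients of $\pi$; because $\pi$ is irreducible and nontrivial (it acts nontrivially on the center of $G$), Schur's lemma gives $\int_G\pi(g)\,d\lambda_0(g)=0$, so every element of $V$ is $L^2(\lambda_0)$-orthogonal to the constant function $1$. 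In particular, the inequality is trivially true ($\ve=1$) in the unweighted case $\rho\equiv 1$; the task is to quantify how much the angle can shrink when $\rho$ is perturbed within $[a,b]$.

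To obtain a uniform $\ve$, I would argue by contradiction and compactness. If the conclusion failed, one could find sequences $\rho_n$ with $a\le\rho_n\le b$ and $f_n\in V$, normalized by $\|f_n\|_{L^2(\lambda_0)}=1$, such that $|\int f_n\rho_n\,d\lambda_0|^2 / \int|f_n|^2\rho_n\,d\lambda_0 \to 1$. Finite-dimensionality of $V$ yields a subsequence with $f_n\to f$ in $L^2(\lambda_0)$ (and, by norm equivalence on $V$, in every stronger sense), where $f\in V$ and $\|f\|_{L^2(\lambda_0)}=1$; weak*-compactness of the ball $\{a\le\rho\le b\}\subset L^\infty(\lambda_0)$ gives a further subsequence with $\rho_n\rightharpoonup\rho$ weak*, $a\le\rho\le b$ a.e. The decomposition
\[
\int f_n\rho_n\,d\lambda_0 - \int f\rho\,d\lambda_0 = \int(f_n-f)\rho_n\,d\lambda_0 + \int f(\rho_n-\rho)\,d\lambda_0
\]
drives both terms to zero---the first by the uniform $L^\infty$ bound on $\rho_n$ together with $L^1$ convergence of $f_n$, the second by weak* convergence tested against $f\in L^1(\lambda_0)$---and the same argument with $|f_n|^2$ in place of $f_n$ handles the denominator. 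Passing to the limit then yields equality in Cauchy--Schwarz in $L^2(\rho\lambda_0)$ between $f$ and $1$, which forces $f$ to be a constant $\rho\lambda_0$-a.e., and hence $\lambda_0$-a.e. since $\rho\ge a>0$. But $f\in V$ is $L^2(\lambda_0)$-orthogonal to the constants, so $f=0$, contradicting $\|f\|_{L^2(\lambda_0)}=1$.

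The only nontrivial point is the joint passage to the limit, which is the standard combination of strong $L^2$ convergence of $f_n$ (yielding $L^1$ convergence of both $f_n$ and $|f_n|^2$) with weak* convergence of $\rho_n$; I do not anticipate a genuine obstacle beyond this bookkeeping. The whole argument uses nothing about $a,b$ beyond $0<a\le b<\infty$, and nothing about $\pi$ beyond its being a finite-dimensional irreducible unitary representation distinct from the trivial one, so the resulting $\ve$ depends only on $G,\pi,a,b$ as required.
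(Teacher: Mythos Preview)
Your argument is correct. The key observations---that the matrix coefficients of a nontrivial irreducible $\pi$ span a finite-dimensional subspace $V\subset L^2(\lambda_0)$ orthogonal to the constants (via $\int_G\pi\,d\lambda_0=0$), and that the admissible densities form a weak*-compact set in $L^\infty$---combine cleanly to give the uniform strict Cauchy--Schwarz bound by compactness. The limit passage you describe is valid: separability of $L^1(G,\lambda_0)$ makes the weak* topology metrizable on bounded sets, and finite-dimensionality of $V$ gives $f_n\to f$ uniformly, so that $|f_n|^2\to|f|^2$ in $L^1$ as needed.

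The paper's proof is genuinely different: it is direct and constructive. It writes the gap between the two sides as the variance $\frac12\int|f(g)-f(g')|^2\rho(g)\rho(g')\,d\lambda_0^{\otimes2}$, then exploits the specific center element $g_0$ with $\pi(g_0)=cI$, $c\neq1$, via the substitution $g'\mapsto g_0g'$ (using Haar invariance) to compare $|f(g)-f(g')|^2$ with $|f(g)-cf(g')|^2$; averaging and the pointwise bound $\rho\in[a,b]$ then yield an \emph{explicit} lower bound on the variance of the form $\tfrac{a|1-c|^2}{4b}\int|f|^2\rho\,d\lambda_0$. What each approach buys: the paper's argument gives a concrete $\varepsilon$ in terms of $a,b$ and $|1-c|$, at the cost of using the center-symmetry hypothesis on $\pi$ in an essential way. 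Your compactness argument is softer and nonconstructive, but actually needs only that $\pi$ is nontrivial irreducible (so that $V\perp\mathbf{1}$), which is a weaker hypothesis than ``$\pi$ acts nontrivially on the center.''
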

\begin{proof}
Let $\lambda_0^{\otimes 2}$ denote the normalized product Haar measure on $G\times G$. Then
\begin{align}
&\frac{1}{2}\int_{G\times G} |f(g)-f(g')|^2 \rho(g)\rho(g')d\lambda_0^{\otimes 2}(g,g') \notag\\
&=\frac{1}{2}\int_{G\times G} (|f(g)|^2  - f(g)\overline{f(g')} - \overline{f(g)}f(g') + |f(g')|^2) \rho(g)\rho(g')d\lambda_0^{\otimes 2}(g,g') \notag \\
&= \int_G|f(g)|^2\rho(g) d\lambda_0(g) - \biggl|\int_G f(g)\rho(g)d\lambda_0(g)\biggr|^2.\label{trivialid}
\end{align}
Let $g_0$ be an element of the center of $G$ such that $\pi(g_0)$ is not the identity operator. As in the proof of Theorem \ref{main1}, we note that by Schur's lemma, $\pi(g_0)$ must be $cI$ for some $c\ne 1$. Since $f$ is the composition of $\pi$ followed by a linear map, the invariance of the Haar measure gives us 
\begin{align*}
&\int_{G\times G} |f(g)-f(g')|^2 \rho(g)\rho(g')d\lambda_0^{\otimes 2}(g,g')\\
&= \int_{G\times G} |f(g)-f(g_0g')|^2 \rho(g)\rho(g_0g')d\lambda_0^{\otimes 2}(g,g')\\
&= \int_{G\times G} |f(g)-cf(g')|^2 \rho(g)\rho(g_0g')d\lambda_0^{\otimes 2}(g,g').
\end{align*}
Since $a\le \rho(g)\le b$ for all $g$, we have
\[
\rho(g_0g') \ge a \ge \frac{a}{b}\rho(g'). 
\]
Applying this to the previous display gives 
\begin{align*}
&\int_{G\times G} |f(g)-f(g')|^2 \rho(g)\rho(g')d\lambda_0^{\otimes 2}(g,g')\\
&\ge \frac{a}{b} \int_{G\times G} |f(g)-cf(g')|^2 \rho(g)\rho(g')d\lambda_0^{\otimes 2}(g,g').
\end{align*}
But trivially, since $a/b\le 1$, 
\begin{align*}
&\int_{G\times G} |f(g)-f(g')|^2 \rho(g)\rho(g')d\lambda_0^{\otimes 2}(g,g')\\
&\ge \frac{a}{b} \int_{G\times G} |f(g)-f(g')|^2 \rho(g)\rho(g')d\lambda_0^{\otimes 2}(g,g').
\end{align*}
Combining the last two displays, we get
\begin{align*}
&\int_{G\times G} |f(g)-f(g')|^2 \rho(g)\rho(g')d\lambda_0^{\otimes 2}(g,g')\\
&\ge \frac{a}{2b} \int_{G\times G} (|f(g)-f(g')|^2+|f(g)-cf(g')|^2) \rho(g)\rho(g')d\lambda_0^{\otimes 2}(g,g').
\end{align*}
But by the inequality $|w-z|^2 \le 2|w|^2+2|z|^2$, we have
\[
|f(g)-f(g')|^2+|f(g)-cf(g')|^2 \ge \frac{|1-c|^2}{2}|f(g')|^2. 
\]
Therefore,
\begin{align*}
&\int_{G\times G} |f(g)-f(g')|^2 \rho(g)\rho(g')d\lambda_0^{\otimes 2}(g,g')\\
&\ge \frac{a|1-c|^2}{4b} \int_{G\times G} |f(g')|^2 \rho(g)\rho(g')d\lambda_0^{\otimes 2}(g,g')\\
&=\frac{a|1-c|^2}{4b} \int_{G} |f(g)|^2 \rho(g)d\lambda_0(g).
\end{align*}
The proof is now easily completed by combining the above inequality with the identity \eqref{trivialid}.
\end{proof}

In what follows, we will need some basic facts about matrix norms. First, note that for any $A,B\in M_n(\cc)$, the Cauchy--Schwarz inequality implies that
\begin{align}\label{trab}
|\tr(AB)|\le \|A\|\|B\|.
\end{align}
Let $\|A\|_{op}$ denote the $L^2$ operator norm of a matrix $A\in M_n(\cc)$, defined as
\[
\|A\|_{op} := \sup\{\|Ax\|: x\in \cc^n, \|x\|=1\}. 
\]
It is easy to see that the operator norm satisfies
\begin{align}\label{opnormineq}
\|AB\|_{op}\le \|A\|_{op}\|B\|_{op}.
\end{align}
Take any $A,B\in M_n(\cc)$. Let $b_1,\ldots,b_n$ be the columns of $B$. Then
\[
\|AB\|^2 = \sum_{i=1}^n \|Ab_i\|^2 \le \sum_{i=1}^n\|A\|_{op}^2 \|b_i\|^2 = \|A\|_{op}^2 \|B\|^2.
\]
As a consequence of this inequality and the inequalities \eqref{trab} and \eqref{opnormineq}, we get that for any sequence $A_1,\ldots,A_k\in M_n(\cc)$, where $k\ge 3$, 
\begin{align}
|\tr(A_1A_2\cdots A_k)| &\le \|A_1\cdots A_{k-1}\|\|A_k\| \notag \\
&\le \|A_1\|_{op}\|A_2\|_{op}\cdots \|A_{k-2}\|_{op}\|A_{k-1}\|\|A_k\|.\label{normineq}
\end{align}
We will now use Lemma \ref{haarlmm} to show that for any edge $e$, the conditional expectation of $\pi(\omega_e)$ given $\{\omega_u: u\ne e\}$ is a matrix whose operator norm  is strictly less than $1$. Moreover, the gap is uniformly bounded below by a constant. 
\begin{lmm}\label{conditionallmm}
Consider any Gibbs measure for our lattice gauge theory on $\zz^d$. For any edge $e$, let $\smallavg{\pi(\omega_e)}'$ denote the matrix of conditional expectations of the entries of $\pi(\omega_e)$ given $\{\omega_u:u\ne e\}$. There is a constant $\ve\in (0,1)$, depending only on $G$, $\beta$, $\pi$ and $d$, such that $\|\smallavg{\pi(\omega_e)}'\|_{op}\le 1-\ve$. 
\end{lmm}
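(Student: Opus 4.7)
The plan is to reduce the statement to a direct application of Lemma \ref{haarlmm}, applied componentwise to $\pi(\omega_e)x$ for unit vectors $x\in \cc^m$ (where $m$ is the dimension of $\pi$).

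First I would identify the conditional distribution of $\omega_e$ given all other edge variables $\{\omega_u: u\ne e\}$. Because the Hamiltonian decomposes over plaquettes and only finitely many plaquettes contain $e$ (exactly $2(d-1)$ of them), the conditional density $\rho$ of $\omega_e$ with respect to normalized Haar measure $\lambda_0$ on $G$ has the form $\rho(g) = Z^{-1}\exp(-\beta H_e(g))$, where $H_e$ is a sum of at most $2(d-1)$ terms of the form $\Re\tr(\pi_0(g)M)$ for fixed unitary matrices $M$ coming from the neighboring plaquette variables (here $\pi_0$ is the defining representation into $U(n)$). Since $G\subseteq U(n)$ is compact, $|H_e(g)|$ is bounded by a constant depending only on $n$ and $d$, so there exist constants $0<a\le b<\infty$, depending only on $G$, $\beta$, and $d$, such that $a\le \rho(g)\le b$ for all $g\in G$ and \emph{for every} conditioning. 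This is the one place to be careful, but it is immediate.

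Next, fix any unit vector $x\in \cc^m$. For each coordinate $i\in\{1,\ldots,m\}$, define $f_i:G\to \cc$ by $f_i(g) := (\pi(g)x)_i = \sum_{j} \pi(g)_{ij}x_j$. Each $f_i$ is the composition of $\pi$ with a fixed linear functional, so Lemma \ref{haarlmm} applies (it is stated for $\cc$-valued $f$, which is exactly this setting). Writing $\smallavg{\cdot}'$ for conditional expectation given $\{\omega_u:u\ne e\}$, Lemma \ref{haarlmm} gives
\begin{align*}
|\smallavg{f_i(\omega_e)}'|^2 \le (1-\ve)^2\,\smallavg{|f_i(\omega_e)|^2}'
\end{align*}
for some $\ve\in(0,1)$ depending only on $G$, $\pi$, $a$, $b$, and hence only on $G$, $\beta$, $\pi$, $d$.

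Finally I would sum over $i$ and use unitarity of $\pi$. The $i$-th component of $\smallavg{\pi(\omega_e)}'\,x$ is exactly $\smallavg{f_i(\omega_e)}'$, so
\begin{align*}
\|\smallavg{\pi(\omega_e)}'\,x\|^2 &= \sum_{i=1}^m |\smallavg{f_i(\omega_e)}'|^2 \le (1-\ve)^2 \sum_{i=1}^m \smallavg{|f_i(\omega_e)|^2}' \\
&= (1-\ve)^2\,\smallavg{\|\pi(\omega_e)x\|^2}' = (1-\ve)^2\|x\|^2,
\end{align*}
where the last equality uses that $\pi(\omega_e)$ is unitary and so $\|\pi(\omega_e)x\|=\|x\|$ pointwise. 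Taking the supremum over unit vectors $x$ yields $\|\smallavg{\pi(\omega_e)}'\|_{op}\le 1-\ve$, as required. The only substantive point in the whole argument is the uniform upper and lower bound on $\rho$, which holds because the local Hamiltonian at $e$ involves only a bounded number of plaquette terms whose magnitudes are controlled by $|\tr(\cdot)|\le n$ on $U(n)$.
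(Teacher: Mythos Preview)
Your proof is correct and follows essentially the same route as the paper: bound the conditional density of $\omega_e$ uniformly above and below using the finiteness of the local Hamiltonian, then apply Lemma~\ref{haarlmm} to each coordinate of $\pi(\omega_e)x$, sum, and use unitarity of $\pi$. The paper compresses the componentwise step into a single displayed inequality for $\|\smallavg{\pi(\omega_e)x}'\|^2$, but the content is identical.
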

\begin{proof}
It is not hard to see that the conditional probability density  (with respect to Haar measure)  of $\omega_e$ given $\{\omega_u:u\ne e\}$ is bounded above and below by two positive constants $a$ and $b$, which depend only on $G$, $\beta$ and $d$. Take any $x\in \cc^m$ such that $\|x\|=1$. Then each component of the vector $\pi(\omega_e) x$ is a linear function of $\pi(\omega_e)$. Therefore by Lemma \ref{haarlmm},
\begin{align*}
\|\smallavg{\pi(\omega_e) x}'\|^2 &\le (1-\ve)^2 \smallavg{\|\pi(\omega_e)x\|^2}', 
\end{align*}
where $\ve \in (0,1)$ depends only on $G$, $\beta$, $\pi$ and $d$. Since $\pi(\omega_e)$ is a unitary matrix, $\|\pi(\omega_e) x\|=\|x\|=1$. Thus, $\|\smallavg{\pi(\omega_e) }'x\| = \|\smallavg{\pi(\omega_e) x}'\|\le 1-\ve$. Taking supremum over $x$, we get the desired result.
\end{proof}

We will now use the previous lemma to prove the perimeter law for Wilson loop expectations.  A proof of the perimeter law already appears in an old paper of \citet{simonyaffe82}; however, that result is conditional on a certain uniqueness assumption about the Gibbs measure. The result given below is unconditional, showing that the perimeter law upper bound holds in complete generality.
\begin{lmm}\label{perilmm}
For any rectangular loop $\ell$ with side-lengths $R$ and $T$, $|\smallavg{W_\ell}|\le C_1 e^{-C_2(R+T)}$ for any Gibbs measure of our lattice gauge theory on $\zz^d$.
\end{lmm}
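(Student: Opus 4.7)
Set $V_j := \pi(\omega_{e_j})$ and $k = 2(R+T)$, so that $W_\ell = \tr(V_1 V_2 \cdots V_k)$. The plan is to gain a multiplicative factor of $(1-\ve)$ for each of a carefully chosen set $S$ of loop edges, and then chain these gains through the matrix-trace inequality \eqref{normineq}. I would take $S$ to be a subset of loop edges that is \emph{pairwise non-plaquette-adjacent}, meaning no two elements of $S$ lie in a common plaquette. When $\min(R,T) \ge 2$, a direct case check of unit-square plaquettes of $\zz^d$ shows that the only pairs of loop edges sharing a plaquette are the four corner pairs $(e_j, e_{j+1})$, each of mixed index parity; hence $S := \{e_1, e_3, \ldots, e_{k-1}\}$ works, with $|S| = R+T$. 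For a thin rectangle with $\min(R,T) = 1$, the alternating scheme fails because a single plaquette now contains multiple loop edges; there I would instead take $S$ to be the edges of the longer side, which are collinear and hence pairwise non-plaquette-adjacent, giving $|S| \ge (R+T)/2$.

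Let $\ma$ be the $\sigma$-algebra generated by $\{\omega_e : e \notin S\}$. Since no plaquette couples two elements of $S$, the Hamiltonian, viewed as a function of $(V_j)_{j \in S}$ with all other edges fixed, is a sum of single-edge potentials; hence the $V_j$ for $j \in S$ are conditionally independent given $\ma$, each with conditional density on $G$ bounded above and below by positive constants depending only on $G, \beta, d$. For each $j \in S$, this conditional independence also implies that $M_j := \E[V_j \mid \ma]$ equals the conditional expectation of $V_j$ given $\{\omega_u : u \ne e_j\}$, so Lemma \ref{conditionallmm} yields $\|M_j\|_{op} \le 1-\ve$.

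Expanding the trace entry-wise and invoking conditional independence, I obtain
\[
\E[W_\ell \mid \ma] = \tr(A_1 A_2 \cdots A_k), \qquad A_j := \begin{cases} M_j, & j \in S, \\ V_j, & j \notin S. \end{cases}
\]
For $j \notin S$ one has $\|A_j\|_{op} = 1$ and $\|A_j\| = \sqrt{m}$ by unitarity of $\pi$, while for $j \in S$ one has $\|A_j\|_{op} \le 1-\ve$ and $\|A_j\| \le \sqrt{m}(1-\ve)$. Plugging these bounds into \eqref{normineq} yields $|\tr(A_1 \cdots A_k)| \le m(1-\ve)^{|S|}$, and taking the outer expectation gives $|\smallavg{W_\ell}| \le m(1-\ve)^{|S|} \le C_1 e^{-C_2(R+T)}$ with $C_2 = -\tfrac{1}{2}\log(1-\ve) > 0$.

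The main technical obstacle I anticipate is the geometric case analysis for the choice of $S$, in particular the thin case $\min(R,T) = 1$, where a single plaquette can contain up to four loop edges and one has to hand-check that a pairwise non-plaquette-adjacent subset of size linear in $R+T$ still exists. Once $S$ is in place, the conditional-independence step and the chaining via \eqref{normineq} follow nearly formally from Lemma \ref{conditionallmm} and the matrix-trace bound, with no further analytic input needed.
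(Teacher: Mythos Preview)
Your argument is correct and follows the same skeleton as the paper's: pick a set $S$ of loop edges that are pairwise non-plaquette-adjacent, condition on everything outside $S$, use conditional independence together with Lemma~\ref{conditionallmm} to replace each $V_j$ with $j\in S$ by a matrix of operator norm at most $1-\ve$, and finish with the trace inequality~\eqref{normineq}.

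The only difference is your choice of $S$. The paper simply takes $S$ to be one of the longer sides (the $T$ collinear edges $e_1,\ldots,e_T$), which are trivially pairwise non-plaquette-adjacent, and then uses $T\ge (R+T)/2$. This avoids any case analysis: there is no separate treatment of thin rectangles, and no need to verify that the four corner pairs are the only plaquette-sharing loop pairs. Your alternating choice $S=\{e_1,e_3,\ldots\}$ gives $|S|=R+T$ instead of $|S|\ge (R+T)/2$, so you gain a factor of~$2$ in the exponent, but at the cost of the geometric case check you flag (which is indeed correct for $\min(R,T)\ge 2$) and a separate argument for $\min(R,T)=1$---where you end up reverting to the paper's choice anyway. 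One tiny bookkeeping point: applying~\eqref{normineq} gives operator norms only for $A_1,\ldots,A_{k-2}$ and Hilbert--Schmidt norms for the last two, so the clean bound is $m(1-\ve)^{|S|-2}$ rather than $m(1-\ve)^{|S|}$; this is of course immaterial.
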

\begin{proof}
Without loss of generality, suppose that $R\le T$. Let $e_1,e_2,\ldots,e_k$ be the edges of $\ell$, where the first $T$ edges belong to a side of length $T$. Let $\smallavg{\cdot}'$ denote the conditional expectation given $\{\omega_e:e\notin \{e_1,\ldots, e_T\}\}$. Under this conditioning, $\omega_{e_1},\ldots,\omega_{e_T}$ are independent random matrices because no two of these edges share a common plaquette. Moreover, for any $1\le i\le T$, the conditional distribution of $\omega_{e_i}$ given $\{\omega_e:e\notin\{e_1,\ldots,e_T\}\}$ is the same as the conditional distribution of $\omega_{e_i}$ given $\{\omega_e:e\ne e_i\}$. Applying the conditional independence to each summand in the formula for the trace in the following display, we get
\begin{align*}
\smallavg{W_\ell}' &= \smallavg{\tr(\pi(\omega_{e_1})\cdots\pi(\omega_{e_k}))}'\\
&= \tr(\smallavg{\pi(\omega_{e_1})}'\smallavg{\pi(\omega_{e_2})}'\cdots \smallavg{\pi(\omega_{e_T})}' \pi(\omega_{e_{T+1}})\cdots \pi(\omega_{e_k})). 
\end{align*}
Thus, by Lemma \ref{conditionallmm}, the matrix inequality \eqref{normineq} (observing that $k-3\ge r$), and the fact that $\|\pi(\omega_e)\|_{op}=1$ for all $e$ (since $\pi$ is a unitary representation), we get
\begin{align*}
|\smallavg{W_\ell}'| &= |\tr(\smallavg{\pi(\omega_{e_1})}'\smallavg{\pi(\omega_{e_2})}'\cdots \smallavg{\pi(\omega_{e_T})}' \pi(\omega_{e_{T+1}})\cdots \pi(\omega_{e_k}))|\\
&\le  \|\smallavg{\pi(\omega_{e_1})}'\|_{op}\cdots \|\smallavg{\pi(\omega_{e_T})}'\|_{op} \|\pi(\omega_{e_{T+1}})\|_{op}\\
&\qquad \qquad \qquad \cdots \|\pi(\omega_{e_{k-2}})\|_{op} \|\pi(\omega_{e_{k-1}})\|\|\pi(\omega_{e_k})\|\\
&\le C(1-\ve)^T\le C(1-\ve)^{(R+T)/2}, 
\end{align*}
where $C>0$ and $\ve\in (0,1)$ depend only on $G$, $\beta$, $\pi$ and $d$. Thus, the same bound holds for $|\smallavg{W_\ell}|$. 
\end{proof}


Finally, we are ready to complete the proof of the second assertion of Theorem~\ref{main2}. Let $\ell$ be a rectangular loop with side-lengths $R\le T$. Consider any Gibbs measure for our lattice gauge theory on $\zz^d$. By inequality \eqref{wilson1}, 
\begin{align}\label{last1}
|\smallavg{W_\ell}|\le e^{C_1T - C_2 RT}.
\end{align}
On the other hand, by Lemma \ref{perilmm},
\begin{align}\label{last2}
|\smallavg{W_\ell}| \le C_3 e^{-C_4(R+T)}\le C_3 e^{-C_4 T}. 
\end{align}
Let $a:= C_4/(C_1+C_4)$. Since $a\in [0,1]$, we may combine \eqref{last1} and \eqref{last2} as follows:
\begin{align*}
|\smallavg{W_\ell}| &\le ( e^{C_1T - C_2 RT})^a (C_3 e^{-C_4 T})^{1-a}\\
&= C_3^{1-a} e^{-C_2a RT},
\end{align*}
where the last equality holds because $C_1 a - C_4(1-a)=0$. This completes the proof of Theorem \ref{main2}.

\section*{Acknowledgments}
I thank  Christian Borgs, Persi Diaconis, J\"urg Fr\"ohlich, Len Gross, Erhard Seiler,  Senya Shlosman, Tom Spencer, Raghu Varadhan, and Akshay Venkatesh for helpful discussions. I am especially grateful to Edward Witten and Steve Shenker for many lengthy and illuminating conversations, and to Sky Cao for carefully reading the proof and pointing out some important references. Lastly, I thank the referees for a number of useful suggestions.


\begin{thebibliography}{99}

\bibitem[Aizenman, Chayes, Chayes, Fr\"ohlich and Russo(1983)]{accfr83} {\sc Aizenman, M., Chayes, J. T., Chayes, L., Fr\"ohlich, J.} and {\sc Russo, L.} (1983). On a sharp transition from area law to perimeter law in a system of random surfaces. {\it Comm. Math. Phys.,} {\bf 92} no. 1, 19--69.


\bibitem[Balian, Drouffe, Itzykson(1974)]{bdi74} {\sc Balian, R., Drouffe, J.~M.} and {\sc Itzykson, C.} (1974). Gauge fields on a lattice. I. General outlook. {\it Phys. Rev. D,} {\bf 10} no. 10, 3376--3395.


\bibitem[Billingsley(1999)]{billingsley99} {\sc Billingsley, P.} (1999). {\it Convergence of probability measures.} Second edition. John Wiley \& Sons, Inc., New York.


\bibitem[Borgs(1984)]{borgs84} {\sc Borgs, C.} (1984). Translation symmetry breaking in four-dimensional lattice gauge theories. {\it Comm. Math. Phys.,} {\bf 96} no. 2, 251--284. 

\bibitem[Borgs and Seiler(1983)]{bs83} {\sc Borgs, C.} and {\sc Seiler, E.} (1983). Lattice Yang--Mills theory at nonzero temperature and the confinement problem. {\it Comm. Math. Phys.,} {\bf 91} no. 3, 329--380.

\bibitem[Brydges and Federbush(1980)]{bf80} {\sc Brydges, D.~C.} and {\sc Federbush, P.} (1980). Debye screening. {\it Comm. Math. Phys.,} {\bf 73} no. 3, 197--246.


\bibitem[Cao(2020)]{cao20} {\sc Cao, S.} (2020). Wilson loop expectations in lattice gauge theories with finite gauge groups. {\it Comm. Math. Phys.,} {\bf 380}, 1439--1505. 



\bibitem[Chatterjee(2019)]{chatterjee19a} {\sc Chatterjee, S.} (2019). Rigorous solution of strongly coupled $SO(N)$ lattice gauge theory in the large $N$ limit. {\it Comm. Math. Phys.,} {\bf 366}, 203--268. 


\bibitem[Chatterjee(2019)]{chatterjee19b} {\sc Chatterjee, S.} (2019). Yang-Mills for probabilists. In {\it Probability and analysis in interacting physical systems,} 1--16. Springer, Cham.

\bibitem[Chatterjee(2020)]{chatterjee20} {\sc Chatterjee, S.} (2020). Wilson loops in Ising lattice gauge theory. {\it Comm. Math. Phys.,} {\bf 377}, 307--340.

\bibitem[Chatterjee and Jafarov(2016)]{cj16} {\sc Chatterjee, S.} and {\sc Jafarov, J.} (2016). The $1/N$ expansion for $SO(N)$ lattice gauge theory at strong coupling. {\it Preprint.} Available at \url{https://arxiv.org/abs/1604.04777}.


\bibitem[Dobrushin and Shlosman(1985)]{ds85} {\sc Dobrushin, R.~L.} and {\sc Shlosman, S.~B.} (1985).  Constructive criterion for the uniqueness of Gibbs field. in {\it Statistical physics and dynamical systems,}  pp.~347--370. Birkh\"auser Boston, Boston, MA.



\bibitem[Durhuus and Fr\"ohlich(1980)]{df80} {\sc Durhuus, B.} and {\sc Fr\"ohlich, J.} (1980). A connection between $\nu$-dimensional Yang--Mills theory and $(\nu - 1)$-dimensional, non-linear $\sigma$-models. {\it Comm. Math. Phys.,} {\bf 75} no. 2, 103--151.

\bibitem[Durrett(2010)]{durrett10} {\sc Durrett, R.} (2010). {\it Probability: theory and examples.} Fourth edition.  Cambridge University Press, Cambridge.


\bibitem[Forsstr\"om, Lenells and Viklund(2020)]{flv20} {\sc Forsstr\"om, M.~P., Lenells, J.} and {\sc  Viklund, F.} (2020). Wilson loops in finite Abelian lattice gauge theories. {\it Preprint.} Available at \url{https://arxiv.org/abs/2001.07453}.

\bibitem[Fradkin and Shenker(1979)]{fradkinshenker79} {\sc Fradkin, E.} and {\sc Shenker, S.~H.} (1979). Phase diagrams of lattice gauge theories with Higgs fields. {\it Phys. Rev. D,} {\bf 19} no. 12, 3682--3697.

\bibitem[Fr\"ohlich(1979)]{frohlich79} {\sc Fr\"ohlich, J.} (1979). Confinement in $\zz_n$ lattice gauge theories implies confinement in $\textup{SU}(n)$ lattice Higgs theories. {\it Phys. Lett. B,} {\bf 83} no. 2, 195--198.


\bibitem[Fr\"ohlich and Spencer(1982)]{frohlichspencer82} {\sc Fr\"ohlich, J.} and {\sc Spencer, T.} (1982). Massless phases and symmetry restoration in abelian gauge theories and spin systems. {\it Comm. Math. Phys.,} {\bf 83} no. 3, 411--454.

\bibitem[Georgii(2011)]{georgii11} {\sc Georgii, H.-O.} (2011). {\it Gibbs measures and phase transitions.} Second edition. Walter de Gruyter \& Co., Berlin.


\bibitem[Glimm and Jaffe(1987)]{gj87} {\sc Glimm, J.} and {\sc Jaffe, A.} (1987). {\it Quantum physics. A functional integral point of view.} Second edition. Springer-Verlag, New York.

\bibitem[G\"opfert and Mack(1982)]{gopfertmack82} {\sc G\"opfert, M.} and {\sc Mack, G.} (1982). Proof of confinement of static quarks in 3-dimensional $U(1)$ lattice gauge theory for all values of the coupling constant. {\it Comm. Math. Phys.,} {\bf 82} no. 4, 545--606.


\bibitem[Greensite(2011)]{greensite11} {\sc Greensite, J.} (2011). {\it An introduction to the confinement problem.} Springer, Heidelberg.


\bibitem[Greensite and Lautrup(1981)]{gl81} {\sc Greensite, J.} and {\sc Lautrup, B.} (1981). First-order phase transition in four-dimensional ${\rm SO}(3)$ lattice gauge theory. 
{\it Phys. Rev. Lett.,} {\bf 47} no. 1, 9--11. 



\bibitem[Guth(1980)]{guth80} {\sc Guth, A.~H.} (1980). Existence proof of a nonconfining phase in four-dimensional $U(1)$ lattice gauge theory. {\it Phys. Rev. D,} {\bf 21} no. 8, 2291--2307.

\bibitem[Hall(2015)]{hall15} {\sc Hall, B.~C.} (2015). {\it Lie groups, Lie algebras, and representations. An elementary introduction.} Second edition. Springer, Cham.


\bibitem[Jaffe and Witten(2006)]{jaffewitten} {\sc Jaffe, A.} and {\sc  Witten, E.} (2006). Quantum Yang--Mills theory. {\it The millennium prize problems,} 129--152, Clay Math. Inst., Cambridge, MA.

\bibitem[Koteck\'y and Shlosman(1982)]{ks82} {\sc Koteck\'y, R.} and {\sc Shlosman, S.~B.} (1982).  First-order phase transitions in large entropy lattice models. {\it Comm. Math. Phys.,} {\bf 83} no. 4, 493--515. 

\bibitem[Levin, Peres and Wilmer(2009)]{lpw09} {\sc Levin, D.~A., Peres, Y.} and {\sc Wilmer, E.~L.} {\it Markov chains and mixing times.} American Mathematical Society, Providence, RI.



\bibitem[Morningstar and Peardon(1999)]{mp99} {\sc Morningstar, C.~J.} and {\sc Peardon, M.} (1999). Glueball spectrum from an anisotropic lattice study. {\it Phys. Rev. D,} {\bf 60} no. 3, 034509.

\bibitem[Osterwalder and Seiler(1978)]{osterwalderseiler78} {\sc Osterwalder, K.} and {\sc Seiler, E. (1978).} Gauge field theories on a lattice. {\it Ann. Phys.,} {\bf 110}, no. 2, 440--471.


\bibitem[Patrascioiu and Seiler(1998)]{ps98} {\sc Patrascioiu, A.} and {\sc Seiler, E.} (1998). Is the 2D $O(3)$  nonlinear $\sigma$ model asymptotically free? {\it  Phys. Lett. B,} {\bf 430} nos. 3-4, 314--319.


\bibitem[Seiler(1982)]{seiler82} {\sc Seiler, E.} (1982). {\it Gauge theories as a problem of constructive quantum field theory and statistical mechanics.} Springer-Verlag, Berlin. 

\bibitem[Simon and Yaffe(1982)]{simonyaffe82} {\sc Simon, B.} and {\sc Yaffe, L.~G.} (1982). Rigorous perimeter law upper bound on Wilson loops. {\it Phys. Lett. B,} {\bf 115} no. 2, 145--147. 


\bibitem['t Hooft(1978)]{thooft78} {\sc 't Hooft, G.} (1978). On the phase transition towards permanent quark confinement. {\it Nuclear Phys. B,} {\bf 138} no. 1, 1--25.

\bibitem[Wegner(1971)]{wegner71} {\sc Wegner, F.~J.} (1971). Duality in generalized Ising models and phase transitions without local order parameters. {\it J. Math. Phys.,} {\bf 12} no. 10, 2259--2272.

\bibitem[Wilson(1974)]{wilson74} {\sc Wilson, K.~G.} (1974). Confinement of quarks. {\it Phys. Rev. D,} {\bf 10} no. 8, 2445--2459.

\end{thebibliography}
\end{document}